\tikzset{
    >=stealth,
    every picture/.style={thick},
    graphs/every graph/.style={empty nodes},
}
\tikzstyle{vertex}=[
\tikzstyle{printersafe}=[decoration={snake,amplitude=0pt}]
\newcommand{\id}{\operatorname{id}}
\newcommand{\loc}{\operatorname{loc}}
\newcommand{\rank}{\operatorname{rank}}
\renewcommand{\qq}{\mathbb{Q}}
\newcommand{\zz}{\mathbb{Z}}
\newcommand{\nn}{\mathbb{N}}
\newcommand{\rr}{\mathbb{R}}
\newcommand{\cc}{\mathbb{C}}
\def\O#1.{\mathcal {O}_{#1}}			
\def\pr #1.{\mathbb P^{#1}}				
\def\af #1.{\mathbb A^{#1}}			
\def\ses#1.#2.#3.{0\to #1\to #2\to #3 \to 0}	
\def\xrar#1.{\xrightarrow{#1}}			
\def\K#1.{K_{#1}}						
\def\bA#1.{\mathbf{A}_{#1}}			
\def\bM#1.{\mathbf{M}_{#1}}				
\def\bL#1.{\mathbf{L}_{#1}}				
\def\bB#1.{\mathbf{B}_{#1}}				
\def\bK#1.{\mathbf{K}_{#1}}			
\def\subs#1.{_{#1}}					
\def\sups#1.{^{#1}}
\DeclareMathOperator{\Supp}{Supp}
\newcommand{\rar}{\rightarrow}
  \newtheorem{introthm}{Theorem}
  \newtheorem*{jproperty}{Jordan property for ${\rm Gl}_n(\cc)$}
  \newtheorem{introcor}{Corollary}
  \newtheorem{theorem}{Theorem}[section]
  \newtheorem{lemma}[theorem]{Lemma}
  \newtheorem{proposition}[theorem]{Proposition}
  \newtheorem{corollary}[theorem]{Corollary}
  \newtheorem{definition}[theorem]{Definition}
  \newtheorem{example}[theorem]{Example}
  \newtheorem{question}[theorem]{Question}
\newtheorem{remark}[theorem]{Remark}
\theoremstyle{remark}
\numberwithin{equation}{section}
\begin{document}

\title[The Jordan property for local fundamental groups]{The Jordan property for local fundamental groups}

\thanks{
Part of this work was completed during a visit of RS to the Princeton University. 
RS would like to thank the Princeton University for the hospitality and the nice working environment, and Gabriele Di Cerbo for funding his visit.
RS is supported by the European Union's Horizon 2020 research and innovation programme under the Marie Sk\l{}odowska-Curie grant agreement No. 842071.
LB is partially supported by the DFG-Graduiertenkolleg GK1821 "Cohomological Methods in Geometry" at the University of Freiburg.
}

\author[L.~Braun]{Lukas Braun}
\address{Mathematisches Institut, Albert-Ludwigs-Universit\"at Freiburg, Ernst-Zermelo-Strasse 1, 79104 Freiburg im Breisgau, Germany}
\email{lukas.braun@math.uni-freiburg.de}

\author[S.~Filipazzi]{Stefano Filipazzi}
\address{
EPFL, SB MATH CAG,
MA C3 625 (B\^atiment MA),
Station 8,
CH-1015 Lausanne, Switzerland.
}
\email{stefano.filipazzi@epfl.ch}

\author[J.~Moraga]{Joaqu\'in Moraga}
\address{Department of Mathematics, Princeton University, Fine Hall, Washington Road, Princeton, NJ 08544-1000, USA.
}
\email{jmoraga@princeton.edu}

\author[R.~Svaldi]{Roberto Svaldi}
\address{EPFL, SB MATH-GE, MA B1 497 (B\^{a}timent MA), Station 8, CH-1015 Lausanne, Switzerland.}
\email{roberto.svaldi@epfl.ch}

\subjclass[2020]{Primary 14F35, 14E20, 14B05.
Secondary 14E30.}

\begin{abstract}
We show the Jordan property for regional fundamental groups of klt singularities of fixed dimension.
Furthermore, we prove the existence of effective simultaneous index one covers for $n$-dimensional klt singularities.
We give an application to the study of local class groups of klt singularities.
\end{abstract}

\maketitle
\setcounter{tocdepth}{1} 
\tableofcontents

\section{Introduction}

Throughout this paper, we work over the field $\mathbb{C}$ of complex numbers.
We work with germs of algebraic singularities over the complex numbers, unless otherwise stated:
when referring to \emph{a singularity} we will mean that.
The \emph{rank} of a group $G$ refers to the minimal number of generators of $G$.

The study of singularities is a classical and fundamental topic in algebraic geometry.
Over the field of complex numbers, one approach that dates back to the modern foundations of the subject is to understand the local topological structure of a given singularity.
Durfee,~\cite{Dur}, showed that the topology of a sufficiently small punctured neighborhood of an algebraic singularity stabilizes -- a result originally proved by Milnor for isolated singularities,~\cite{Mil}.
In view of these results, we can talk about the local fundamental group $\pi^{\loc}_1 (X,x)$ of a germ of an algebraic singularity $x\in X$ -- to formally define $\pi^{\loc}_1 (X,x)$, we will work with the system of analytic neighborhoods of an algebraic singularity, cf. Definition~\ref{def:loc.fund.gr}. 
We will clarify each time what kind of neighborhoods (whether algebraic or analytic) we work with.
It is then natural to wonder whether we can actually compute the local fundamental group of a given algebraic singularity, and, vice versa, how we can use this piece of information to characterize singularities.
For example, Mumford,~\cite{Mum61}, proved that the triviality of the local fundamental group completely characterizes smooth points on surfaces, a result which fails to hold in higher dimension, as  can be easily seen by considering the local fundamental group of an isolated hypersurface singularity.
In the same vein, one can also ask whether it is possible to characterize those groups that can be realized as local fundamental groups of a given algebraic singularity.
In~\cite{KK14}, Kapovich and Koll\'ar showed that already for 3-dimensional isolated singularities the local fundamental group can be arbitrarily complicated.

In this work, we will focus on those singularities related to the development of the Minimal Model Program, as they play a central role in modern birational geometry.
In particular, we will be interested in the class of so-called log canonical singularities, the largest class of singularities for which the Minimal Model Program is expected to work.
Despite being the most natural class of singularities that one would like to understand, Koll\'ar,~\cite{Kol11}, has shown that three dimensional log canonical singularities already display a wide variety of fundamental groups; in particular, these groups may be infinite.
In view of this, the next natural candidate is the subclass of Kawamata log terminal (in short, klt) singularities.
It is expected that this subclass of singularities is better behaved with respect to their topological structure:
Koll\'ar conjectured that the local fundamental group of a klt singularity is finite.

In our treatment we will consider the larger category of klt singularities $x \in (X, \Delta)$, where the boundary $\Delta$ is an effective Weil divisor with coefficients in $[0, 1)$.
Klt singularities are rational, and, from a topological viewpoint, strong evidence towards a simpler structure for the local fundamental group of klt singularities is provided by the contractibility of the dual complex of such singularities, see~\cite{dFKX}.
Works of Xu and Tian--Xu,~\cites{Xu14, TX17}, showed that by considering plt blow-ups, a special class of birational transformations  of a klt singularity, it is possible to reduce Koll\'ar's conjecture to an analogous conjecture on the finiteness of the orbifold fundamental group of the smooth locus of log Fano type varieties.

In dimension two, the finiteness of the fundamental group of a (possibly singular) Fano variety has been known for quite some time,~\cites{FKL93,GZ94,GZ95,KM99}, and~\cite{TX17} settled some partial results in dimension three.
Recently, the first-named author settled both conjectures in full generality,~\cite{Bra20}*{Theorems~1 and~2}, in fact, proving an even more comprehensive result: the finiteness of the fundamental group of the smooth locus of a neighborhood of a klt singularity $x\in (X, \Delta)$.
This variant of the fundamental group of a singularity is called the regional fundamental group $\pi_1^{\rm reg}(X,x)$ of a klt singularity.
It is not hard to see that the inclusion of the smooth locus induces a natural surjection $\pi_1^{\rm reg}(X,x) \twoheadrightarrow \pi_1^{\rm loc}(X,x)$.

\begin{introthm}
\label{intro-thm:braun}
(cf.~\cite{Bra20}*{Theorem 1 \& 2})
Let $(E,\Delta_E)$ be a log Fano pair, that is, $(E,\Delta_E)$ is klt and $-(K_E+\Delta_E)$ is ample.
Assume that $\Delta_E=\Delta'+\Delta''$, where the coefficients of $\Delta'$ belong to the set $\{1-\frac 1 n \ \vert \ n \in \mathbb{N}_{>0}\}$ and $\Delta''$ is effective.
Let $E^0 \subset E$ be be a big open subset on which $(E^0,\left.\Delta'\right|_{E^0})$ is log smooth.
Then, the orbifold fundamental group $\pi_1(E^0,\left.\Delta'\right|_{E^0})$ is finite.

Let $x\in (X,\Delta)$ be a klt singularity.
Assume that $\Delta=\Delta'+\Delta''$, where the coefficients of $\Delta'$ belong to the set $\{1-\frac 1 n \ \vert \ n \in \mathbb{N}_{>0}\}$ and $\Delta''$ is effective. Then the regional fundamental groups
$\pi_1^{\rm reg}(X,x)$ and $\pi_1^{\rm reg}(X,\Delta',x)$ are finite.
%\end{itemize}
\end{introthm}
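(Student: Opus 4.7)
The plan is to handle the two statements separately, but both ultimately rely on boundedness results for log Fano pairs in fixed dimension. I outline the strategy for each.

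For the first statement, my approach is to convert a finite quotient of $\pi_1(E^0,\Delta'|_{E^0})$ into a finite Galois cover and bound its degree via boundedness of log Fanos. Given a finite orbifold quotient, the coefficients of $\Delta'$ having form $1-1/n$ guarantee that the corresponding topological cover extends to an algebraic finite Galois cover $f\colon E'\to E$, étale over $E^0\setminus\Supp(\Delta')$ and with ramification index $n$ over each component of $\Delta'$ with coefficient $1-1/n$. By the Riemann--Hurwitz formula for pairs, $K_{E'}+f^{\ast}\Delta''=f^{\ast}(K_E+\Delta')$, so $-(K_{E'}+f^{\ast}\Delta'')$ is ample and $(E',f^{\ast}\Delta'')$ remains klt and of log Fano type. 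The next step is to invoke Birkar's solution of the BAB conjecture to show that the pairs $(E',f^{\ast}\Delta'')$ form a bounded family as $f$ varies; this yields a uniform upper bound on $\deg(f)$ (e.g. through boundedness of the anticanonical volume or the degree in the moduli of bounded families), which in turn forces $|\pi_1(E^0,\Delta'|_{E^0})|$ to be finite. The presence of the arbitrary effective summand $\Delta''$ is the main difficulty here: one has to ensure the boundedness is preserved as one pulls back $\Delta''$ along arbitrarily ramified covers. One standard trick is to first reduce to the case $\Delta''=0$ by perturbing coefficients, using that $-(K_E+\Delta')$ stays big since $\Delta''\geq 0$.

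For the second statement, I would reduce to the first via the Xu / Tian--Xu strategy of plt blow-ups. Given the klt singularity $x\in(X,\Delta)$, extract a prime divisor $E$ with $a_E(X,\Delta)=0$ via a plt blow-up $\phi\colon Y\to X$, so that $(Y,E+\Delta_Y)$ is plt with $-(K_Y+E+\Delta_Y)$ $\phi$-ample and $E$ $\mathbb{Q}$-Cartier. By adjunction, the pair $(E,\Delta_E)$ obtained from the different is log Fano of dimension $n-1$, and the coefficients of the divisorial part $\Delta'_E$ coming from $\Delta'$ still lie in the standard set $\{1-1/n\}$ (this is a direct check using the formula for the different at codimension-one points of $E$). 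Restricting to the smooth locus of an étale neighborhood and using the fact that the link of $x$ retracts onto (a neighborhood of) the exceptional divisor $E$, one constructs a surjection from $\pi_1^{\rm reg}(X,\Delta',x)$ onto a quotient of $\pi_1(E^0,\Delta'_E|_{E^0})$ for a suitable big open $E^0\subseteq E$. Applying part (1) to $(E,\Delta_E)$ then yields the finiteness of $\pi_1^{\rm reg}(X,\Delta',x)$, and the finiteness of $\pi_1^{\rm reg}(X,x)$ follows by taking $\Delta'=0$.

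The hardest step to make precise will be the existence of the plt blow-up together with control on the standard nature of the coefficients of $\Delta'_E$: this requires the MMP for klt pairs plus the combinatorics of the adjunction formula, and it is exactly where the restriction on $\Delta'$ with standard coefficients is essential (to keep the boundary of $(E,\Delta_E)$ within the hypotheses of part (1)). Once the plt blow-up is in place, the topological surjection from the regional fundamental group onto the orbifold fundamental group of the exceptional divisor is a relatively formal consequence of the contraction theorem and the local structure of links, which allows us to close the loop.
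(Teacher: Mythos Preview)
First, note that the paper does not give its own proof of this statement: Theorem~\ref{intro-thm:braun} is quoted from \cite{Bra20} and used as input. So there is no proof in the paper to compare against. That said, your proposal has two genuine gaps worth flagging.

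For the first statement, your argument only controls the \emph{finite} quotients of $\pi_1(E^0,\Delta'|_{E^0})$: you produce, for each finite quotient, a log Fano cover and bound its degree via BAB. This is exactly Xu's argument in \cite{Xu14} and it proves finiteness of the \emph{algebraic} (profinite) fundamental group. It does not prove that the topological orbifold fundamental group is finite, because an infinite group can very well have all its finite quotients of bounded order (or even no nontrivial finite quotients at all). The passage from ``all finite quotients are bounded'' to ``the group is finite'' is precisely the new content of \cite{Bra20}, and it requires substantially different tools ($L^2$-methods, a Bochner-type argument, and an inductive scheme) that are absent from your outline.

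For the second statement, the direction of your surjection is wrong. You write that one obtains a surjection \emph{from} $\pi_1^{\rm reg}(X,\Delta',x)$ \emph{onto} a quotient of $\pi_1(E^0,\Delta'_E|_{E^0})$, and then conclude finiteness of the source from finiteness of the target --- but a surjection onto a finite group says nothing about the source. The correct picture (see Proposition~\ref{prop:explicit-v_0} in the present paper, or \cite{TX17}) is the reverse: there is a short exact sequence
\[
1 \longrightarrow \mathbb{Z}/m\mathbb{Z} \longrightarrow \pi_1(V^0,\Delta_V^0) \longrightarrow \pi_1(E^0,B^0) \longrightarrow 1
\]
coming from the punctured-disk bundle over $E^0$, together with a surjection $\pi_1(V^0,\Delta_V^0)\twoheadrightarrow \pi_1^{\rm reg}(X,\Delta',x)$. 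Thus $\pi_1^{\rm reg}(X,\Delta',x)$ is a \emph{quotient} of a finite-by-(finite) group, and finiteness follows once part one is established. Your retraction intuition is fine, but you must track that the link sits \emph{over} $E$ as a circle bundle, so loops in the link map \emph{to} loops in $E$, not the other way around; and the extra $\mathbb{Z}/m\mathbb{Z}$ coming from the fiber must be shown to have finite image, which is part of what \cite{Bra20} does.
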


In the recent paper~\cite{XZ20}, the authors computed an effective bound on the order of the regional fundamental group in terms of the dimension of the singularity and of its normalized volume.

It is natural to ask whether it is possible to show that the local (resp., regional) fundamental group of a klt singularity carries any further structure besides its finiteness.
In~\cite{Bra20}*{Corollary~6}, it is shown that all finite groups can appear as regional fundamental groups of quotient singularities.
At a first glance, it would then seem that such question is untenable.
Nonetheless, from the point of view of the study of singularities, it makes sense not to consider the fundamental groups of all singularities at once, but rather to study their structure based on the dimension $n$ of the singularity.
This is even more evident in view of the Jordan property that we introduce below.

Following this line of thought, the main result of this paper is the following structure theorem showing that the regional fundamental group of a klt singularity of dimension $n$ contains a normal abelian subgroup whose rank and index can be bounded by a constant that only depends on the dimension $n$ of the singularity.

\begin{introthm}
\label{intro-theorem:Jordan-klt}
Let $n$ be a positive integer.
Then, there exists a constant $c=c(n)$, only depending on $n$,
that satisfies the following property. 
Let $x\in (X,\Delta)$ be a $n$-dimensional klt singularity.
Then, there is an exact sequence
\begin{equation}
\label{exact-sequence-Jordan}
\xymatrix{
1\ar[r] &
A\ar[r] &
\pi_1^{\rm reg}(X,\Delta,x) \ar[r]&
N\ar[r] &
1,
}
\end{equation}
where $A$ is a finite abelian group of rank at most $n$ and index at most $c(n)$.
Furthermore, since the regional fundamental group surjects onto the local fundamental group, we obtain an exact sequence 
\begin{equation}
\label{exact-sequence-Jordan-local}
\xymatrix{
1\ar[r]&
A'\ar[r]&
\pi_1^{\rm loc}(X,x) \ar[r]&
N'\ar[r]&
1,
}
\end{equation}
where $A'$ is a finite abelian group of rank at most $n$
and index at most $c(n)$.
\end{introthm}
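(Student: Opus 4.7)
The plan is to enhance Theorem~\ref{intro-thm:braun}, which already gives the finiteness of $G := \pi_1^{\rm reg}(X,\Delta,x)$, into the desired structural statement by embedding a finite-index subgroup of $G$ into ${\rm GL}_d(\cc)$ for some $d = d(n)$ and then applying the classical Jordan theorem for complex linear groups. The second exact sequence for $\pi_1^{\rm loc}(X,x)$ will follow formally from the first by pushing $A$ and the quotient through the natural surjection $\pi_1^{\rm reg}(X,\Delta,x) \twoheadrightarrow \pi_1^{\rm loc}(X,x)$, so I focus on the regional case.

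For the linearization, I would follow the local-to-global reduction employed in the proof of Theorem~\ref{intro-thm:braun}: apply Xu's plt blow-up at $x$ to extract a prime divisor $E$ carrying the structure of a log Fano pair $(E,\Delta_E)$ of dimension $n-1$. There is a short exact sequence in which $G$ sits with a cyclic kernel coming from the monodromy around $E$ and a quotient closely related to the orbifold fundamental group of $(E^0,\Delta_E|_{E^0})$. A finite-index subgroup $G_0 \le G$, of index bounded in $n$, then acts algebraically on $(E,\Delta_E)$. By Birkar's boundedness for log Fano pairs with coefficients in a suitable DCC set, the collection of such pairs $(E,\Delta_E)$ forms a bounded family, so there exist uniform $m=m(n)$ and $d=d(n)$ for which $-m(K_E+\Delta_E)$ is very ample with $h^0(E,-m(K_E+\Delta_E)) \le d$. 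The induced $G_0$-representation on this linear system is faithful, yielding $G_0 \hookrightarrow {\rm GL}_d(\cc)$, and Jordan's theorem then produces a normal abelian subgroup $A \lhd G$ of index at most $c(n)$.

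The remaining and subtlest task is to refine the rank bound $\rank(A) \le d(n)$ given by Jordan to the sharp bound $\rank(A) \le n$. I would exploit the geometric meaning of the cover $X_A \to X$ attached to $A$: this is a quasi-\'etale Galois cover whose abelian Galois group is classified by the torsion part of the local class group of a uniform index-one cover of $(X,\Delta)$, whose existence is precisely the effective simultaneous index-one cover result announced in the abstract. A direct analysis of the number of independent torsion generators of this local class group in terms of $\dim X = n$ then yields $\rank(A) \le n$. This last step is both the heart of the argument and the main obstacle: Jordan's theorem is agnostic to the geometric origin of the representation and only gives $\rank(A) \le d(n)$, a bound much weaker than $n$. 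The improvement must come from intrinsic geometric information about abelian actions on $n$-dimensional klt germs, and organizing this information via effective simultaneous index-one covers appears to be the decisive new input provided by this paper.
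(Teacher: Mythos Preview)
Your high-level architecture --- plt blow-up, short exact sequence with cyclic kernel from the loop around $E$, and a Jordan-type statement on the Koll\'ar component via BAB --- matches the paper. The genuine gap is in the rank bound, and your proposed fix is circular.

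You obtain an abelian $A$ of index $\le c(n)$ by linearizing the action on $H^0(E,-m(K_E+\Delta_E))$ and applying classical Jordan; this gives only $\rank(A)\le d(n)$. You then propose to sharpen this to $\rank(A)\le n$ using the effective simultaneous index-one cover and the bound on generators of the torsion local class group. But in the paper both of those results (Theorem~\ref{thm:effective-index-one-cover} and Corollary~\ref{thm_class_group}) are \emph{consequences} of Theorem~\ref{intro-theorem:Jordan-klt}, not inputs to it. So the argument as written loops back on itself. Moreover, even setting logical order aside, bounding $\rank(A)$ by the number of generators of ${\rm Cl}(\O X,x.)_{\rm tor}$ does not obviously give $n$: an abelian subgroup of $\pi_1^{\rm reg}$ need not inject into the abelianization, and the class-group bound you cite is itself $b(n)$, not $n$.

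The paper's route to $\rank(A)\le n$ is quite different and does not pass through any linearization on global sections. One takes the universal cover $(E_1,B_1)$ of $(E^0,B^0)$, compactifies to a rationally connected variety, and applies the Prokhorov--Shramov fixed-point theorem (this is where BAB enters): a subgroup $H\le G$ of index $\le f(n)$ fixes a point $z_1$, whose image $z$ in a log resolution $(Z,B_Z)$ of $(E,B)$ is a \emph{toroidal} point. The regional fundamental group of a toroidal point in dimension $n-1$ is abelian of rank $\le n-1$. Using the Whitney-stratification machinery of \S\ref{sec:whitney}, one finds a small analytic neighborhood $U_{z,\delta}\subset E^{\rm int}_{2\epsilon}$ over which the punctured-disk bundle $V^0\to E^0$ trivializes; hence $\pi_1(V_{z,\delta})\simeq \zz\times\pi_1(U_{z,\delta})$ is abelian of rank $\le n$, and its image in $\pi_1(V^0,\Delta^0_V)$ has index $\le f(n)$. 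Passing to the normal core finishes. In short: the sharp rank bound comes not from refining a crude Jordan bound after the fact, but from \emph{constructing} $A$ geometrically as the image of the fundamental group of a product of at most $n$ punctured disks sitting inside $V^0$.
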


Even though $A$ and $N$ are not uniquely determined, 
we will refer to them as the {\em abelian} and {\em non-abelian}
parts of the regional fundamental group, hence, the notation $A$ and $N$.
In the case of the local fundamental group, the above was conjectured by Shokurov.

\subsection*{The Jordan property}
Those klt singularities $x\in X$ that can be represented as a quotient $X=\mathbb{C}^n/G$, for some finite group $G\leqslant {\rm Gl}_n(\mathbb{C})$, represent an important and well-studied class with plenty of applications in many other branches of mathematics, most notably representation theory and mathematical physics.

It is a classical result of Jordan,~\cite{Jor1873}, that any such finite subgroup of the general linear group satisfies the following property.

\begin{jproperty} 
Let $n$ be a positive integer and $G \leqslant {\rm Gl}_n(\mathbb{C})$ be a finite group.
Then there exists a positive integer $a=a(n)$, only depending on the dimension $n$ of the ambient space, and a normal abelian subgroup $A \trianglelefteq G$ of index at most $a(n)$ in $G$.
\end{jproperty}

As $A$ is an abelian normal subgroup, the action of $A$ on $\mathbb{C}^n$ is diagonalizable and the quotient $T \coloneqq \mathbb{C}^n/A$ is a $\qq$-factorial toric singularity endowed with a finite morphism $T\rightarrow X$ of degree bounded by $a(n)$.
Moreover, the rank of $A$ is at most $n$.
As $G$ satisfies the Jordan property, it possible to control $\pi_1^{\rm reg}(X,x)$.
% the quotient morphism $\cc^n \rar X$ makes 
More precisely, we have a natural surjective homomorphism $G\rightarrow \pi_1^{\rm loc}(X,x)$, which actually becomes an isomorphism when considering $\pi^{\rm reg}_1(X,x)$, since we can assume that $G$ contains no pseudo-reflections.
Hence, the Jordan property holds for $\pi_1^{\rm loc}(X,x)$ when $x \in X$ is a $n$-dimensional quotient singularity, and the constant $a=a(n)$ above will only depend in this case on the dimension of $X$.

Theorem~\ref{intro-theorem:Jordan-klt} is clearly inspired by the Jordan property of the fundamental groups of quotient singularities just illustrated, and its scope is to show that an analogous principle holds for the local fundamental groups of a general $n$-dimensional klt singularity.
We shall say that a class $\mathcal{G}$ of finite groups satisfy the Jordan property if there exists a positive integer $d=d(\mathcal{G})$ such that for any $G \in \mathcal{G}$, there exists a normal abelian subgroup $A \trianglelefteq G$ of index at most $d$ in $G$.
Furthermore, if we can choose $A$ to have rank at most $r$,
we say that the class satisfy the Jordan property with rank $r$.
It should be clear to the reader that such scope has been achieved and that Theorem \ref{intro-theorem:Jordan-klt} can be rephrased in the following coincise form.

\begin{introcor}
\label{intro-cor:jordan}
Let $n$ be a positive integer.
Let $\mathcal{R}_n$ (resp., $\mathcal{L}_n$) be the class of regional fundamental groups $\pi_1^{\rm reg}(X, \Delta, x)$ (resp., local fundamental groups $\pi_1^{\rm loc}(X, \Delta, x)$) of klt singularities $x \in (X, \Delta)$ of dimension $n$.
Then $\mathcal{R}_n$ (resp., $\mathcal{L}_n$) satisfies the Jordan property with rank $n$.
\end{introcor}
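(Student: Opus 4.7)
The plan is to observe that Corollary \ref{intro-cor:jordan} is really a restatement of Theorem \ref{intro-theorem:Jordan-klt} once one matches the notation. I would begin by fixing a group $G$ belonging to $\mathcal{R}_n$, namely $G = \pi_1^{\rm reg}(X, \Delta, x)$ for some $n$-dimensional klt singularity $x \in (X, \Delta)$. Applying Theorem \ref{intro-theorem:Jordan-klt} to $(X, \Delta)$ produces the exact sequence \eqref{exact-sequence-Jordan}, yielding a normal abelian subgroup $A \trianglelefteq G$ whose index satisfies $[G:A] = |N| \leq c(n)$ and whose rank is at most $n$. Since the constant $c(n)$ depends only on the dimension $n$ and not on the particular singularity chosen, setting $d(\mathcal{R}_n) \coloneqq c(n)$ verifies the definition of the Jordan property with rank $n$ for $\mathcal{R}_n$.

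For the class $\mathcal{L}_n$, I would argue analogously by directly invoking the second exact sequence \eqref{exact-sequence-Jordan-local} of Theorem \ref{intro-theorem:Jordan-klt}, which produces a normal abelian subgroup $A' \trianglelefteq \pi_1^{\rm loc}(X, x)$ of rank at most $n$ and index at most $c(n)$. Alternatively, this could be derived formally from the regional case by using the natural surjection $\pi_1^{\rm reg}(X,x) \twoheadrightarrow \pi_1^{\rm loc}(X,x)$: the image of $A$ is a normal abelian subgroup of $\pi_1^{\rm loc}(X,x)$, its rank is at most that of $A$ (since quotients of finitely generated abelian groups need no more generators), and its index in the image is at most $[G:A] \leq c(n)$.

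The main (and only) obstacle here is not in the corollary itself but in Theorem \ref{intro-theorem:Jordan-klt}, which provides both the existence of an abelian normal subgroup and the explicit bounds on its rank and index in terms of $n$. Once that structural theorem has been established, the corollary follows by unpacking the definition of the Jordan property with rank $r$; no additional group-theoretic or geometric input is required. I would therefore present the proof as a short paragraph deducing the statement directly from Theorem \ref{intro-theorem:Jordan-klt}, treating the corollary as essentially a terminological reformulation that is convenient for comparison with the classical Jordan property for $\mathrm{Gl}_n(\mathbb{C})$.
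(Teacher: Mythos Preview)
Your proposal is correct and matches the paper's treatment exactly: the paper does not give a separate proof of Corollary~\ref{intro-cor:jordan} at all, but simply states that ``Theorem~\ref{intro-theorem:Jordan-klt} can be rephrased in the following concise form,'' treating the corollary as a terminological restatement. Your unpacking of the definition of the Jordan property with rank $n$ from the exact sequences~\eqref{exact-sequence-Jordan} and~\eqref{exact-sequence-Jordan-local} is precisely the intended (and trivial) deduction.
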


To prove the Jordan property for the regional and local fundamental groups of a klt singularity, we following the global-to-local philosophy: indeed, we first prove that the Jordan property holds for the orbifold fundamental group of a (weakly) log Fano pair.

\begin{introthm}
\label{intro-thm:JP.orb}
(cf. Theorem~\ref{thm:finiteness-weakly-LF})
Let $(E,\Delta)$ be a pair of weakly log Fano type.
Let $E^0$ be a big open subset on which $(E^0,\Delta^0)$ is log smooth,
where $\Delta^0$ is the restriction of $\Delta$ to $E^0$.
Then $\pi_1(E^0,\Delta^0)$ is finite.
Furthermore, the class of fundamental groups of
the log smooth locus of weakly log Fano type pairs of dimension $n$ satisfy the Jordan property with rank $n$.
\end{introthm}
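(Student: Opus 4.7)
The plan is to deduce the result from a bounded-family analysis of the Galois cover associated to the fundamental group. For the finiteness statement, the strategy is to reduce to the setting of Braun's Theorem~\ref{intro-thm:braun}. Since $(E,\Delta)$ is of weakly log Fano type, there exists an effective $\Gamma$ with $(E,\Delta+\Gamma)$ klt log Fano; by perturbing coefficients, one can produce a log Fano pair whose boundary decomposes as $\Delta'+\Delta''$ with $\Delta'$ carrying standard coefficients and $\Delta''$ effective, and whose log smooth locus contains a big open of $E^0$. As the orbifold fundamental group is insensitive to removing loci of codimension at least $2$, Theorem~\ref{intro-thm:braun} yields the finiteness of $\pi_1(E^0,\Delta^0)$.

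Next, to establish the Jordan property, write $G := \pi_1(E^0,\Delta^0)$ and let $\tilde{E}\to E$ be the corresponding finite Galois orbifold cover. Then $G$ acts faithfully on $\tilde{E}$, and the pair $(\tilde{E},\tilde{\Delta})$ obtained by pulling back is again weakly log Fano type of dimension $n$, since the Fano-type condition is preserved under quasi-\'etale finite covers of klt pairs. Because $\Delta'$ has standard coefficients $1-1/m$, the ramification indices of $\tilde{E}\to E$ along each orbifold divisor are controlled, and one can bound the discrepancies of $\tilde{E}$ from below by some $-1+\varepsilon(n)$ depending only on the dimension. Invoking Birkar's BAB theorem then places all such $\tilde{E}$ in a bounded family $\mathcal{X}\to T$.

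This boundedness controls the embedding $G\hookrightarrow \mathrm{Aut}(\tilde{E})$ in two ways: the number of connected components of the automorphism scheme $\underline{\mathrm{Aut}}(\tilde{E})$ is bounded by a constant $c_1=c_1(n)$, and the identity component $\mathrm{Aut}^0(\tilde{E})$ is an algebraic group whose maximal tori have dimension at most $n$ (since they act effectively on an $n$-dimensional variety). Setting $G^0 := G\cap \mathrm{Aut}^0(\tilde{E})$ gives a normal subgroup of $G$ of index at most $c_1$. Invoking the Jordan property for finite subgroups of connected algebraic groups (reduce to the reductive case by Chevalley's decomposition, then apply classical Jordan inside a faithful representation) produces a normal abelian subgroup $A \trianglelefteq G^0$ of index $\leq c_2(n)$ which, being finite abelian, is conjugate into a maximal torus; hence $A$ has rank at most $n$. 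Replacing $A$ by its normal core in $G$ gives the desired abelian subgroup of $G$ of rank at most $n$ and of index bounded in terms of $n$. The main obstacle I anticipate is obtaining the uniform $\varepsilon(n)$-klt bound on $\tilde{E}$ so that BAB applies irrespective of the order of $G$: this is delicate because $|G|$ is a priori unbounded, and the ramification of $\tilde{E}\to E$ along components of $\Delta$ must be pinned down purely in terms of the standard-coefficient structure of $\Delta'$ and the dimension~$n$.
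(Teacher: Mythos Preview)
Your finiteness argument is correct and in fact more direct than the paper's. The paper runs an MMP for $-(K_E+\Delta_s)$ to reach a model $(E_k,\Delta_{E_k,s})$ with $-(K_{E_k}+\Delta_{E_k,s})$ nef and big, applies Braun's theorem there, and then traces a chain of surjections $\pi_1(E_k^0,\Delta^0_{E_k,s})\twoheadrightarrow\cdots\twoheadrightarrow\pi_1(E^0,\Delta^0)$ through each flip and divisorial contraction. Your route---enlarging the boundary to make $(E,\Delta+\Gamma)$ log Fano and invoking Theorem~\ref{intro-thm:braun} with $\Delta'=\Delta_s$ and $\Delta''=(\Delta-\Delta_s)+\Gamma$---reaches the same conclusion without the MMP.

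For the Jordan property, however, your argument has two genuine gaps. The one you anticipate is fatal: there is no uniform $\varepsilon(n)$-lc bound on the covers $\tilde E$. The closed set $E\setminus E^0$ has codimension at least two and may contain klt singularities of $E$ with discrepancies arbitrarily close to $-1$; since $\tilde E\to E$ is quasi-\'etale over these points, the singularities lift unchanged to $\tilde E$. Nothing about the standard coefficients of $\Delta_s$ or the ramification along the orbifold divisors constrains these codimension-$\geq 2$ loci, so BAB does not apply to the family of $\tilde E$'s. There is also a second gap in your rank bound: a finite abelian subgroup of a connected linear algebraic group need not lie in a maximal torus. The Klein four-group in $\mathrm{Aut}(\mathbb P^1)=\mathrm{PGL}_2$---which is exactly $\pi_1$ of the orbifold $(\mathbb P^1,\tfrac12 p_1+\tfrac12 p_2+\tfrac12 p_3)$---is abelian of rank $2$ and sits in no one-dimensional torus, so your argument would output rank $2>n=1$ there.

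The paper sidesteps both issues by appealing to Prokhorov--Shramov (Proposition~\ref{prop:Jordan-Bir}) rather than to BAB on $\tilde E$. The MMP step ensures that the orbifold universal cover $Z$ of $(E_k^0,\Delta^0_{E_k,s})$ is rationally connected; one then regularises and equivariantly resolves the $G$-action and uses BAB only indirectly, to produce a bounded-index subgroup $F\leq G$ with a fixed point on a smooth model. Linearising at that fixed point embeds $F\hookrightarrow\mathrm{GL}_n(\mathbb C)$, and classical Jordan there yields the normal abelian subgroup of rank at most $n$. Thus the rank bound comes from an $n$-dimensional tangent space, not from tori in $\mathrm{Aut}^0(\tilde E)$, and no $\varepsilon$-lc control on $Z$ is ever needed.
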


In particular, the above Theorem immediately shows that the fundamental group of the smooth locus of a normal projective variety with klt singularities and ample anticanonical class satisfies the Jordan property.
Theorem~\ref{intro-thm:JP.orb} is deduced from the recent result of Prokhorov--Shramov,~\cite{PS14}, on the Jordan property for the birational automorphism groups of Fano varieties; 
Prokhorov--Shramov's result in turn relies on Birkar's proof of the BAB Conjecture,~\cites{Bir16a, Bir16b}.

Once the Jordan property is understood in the global framework, the main core of the paper is dedicated to the study of how the Jordan property can be lifted from the orbifold fundamental group of the Koll\'ar component of a plt blow-up to a suitable neighborhood of a klt singularity.
This is carried out in \S~\ref{sec:whitney}-\ref{sec:proof-jordan}.

\subsection*{Applications}

Theorem~\ref{intro-theorem:Jordan-klt} immediately implies the following bound on the number of generators of the regional fundamental group of a $n$-dimensional klt singularity.

\begin{introcor}\label{introcor-1}
Let $n$ be a positive integer.
There exists a constant $b=b(n)$ only depending on $n$ that satisfies the following property.
Let $x\in (X,\Delta)$ be a $n$-dimensional klt singularity,
then the regional fundamental group 
$\pi_1^{\rm reg}(X,\Delta,x)$ is generated by at most $b(n)$ elements.
Furthermore, the number of generators of order larger than $b(n)$ is at most $n$.
\end{introcor}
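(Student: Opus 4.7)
The plan is to deduce this corollary directly from Theorem~\ref{intro-theorem:Jordan-klt}, which provides the structural information on $\pi_1^{\rm reg}(X,\Delta,x)$ we need. Write $G := \pi_1^{\rm reg}(X,\Delta,x)$ and invoke Theorem~\ref{intro-theorem:Jordan-klt} to obtain a short exact sequence
\begin{equation*}
1 \longrightarrow A \longrightarrow G \longrightarrow N \longrightarrow 1
\end{equation*}
with $A$ finite abelian of rank $\leq n$ and $[G:A] = |N| \leq c(n)$.

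Next, I would assemble a generating set of $G$ out of generators of $A$ and lifts of generators of $N$. Since $A$ is abelian of rank at most $n$, it admits a (possibly trivial) generating set $a_1,\dots,a_r$ with $r\leq n$. Since $|N|\leq c(n)$, the quotient $N$ admits a generating set of size at most $\lceil\log_2 c(n)\rceil$; pick any such set and lift its elements to $\tilde h_1,\dots,\tilde h_k\in G$. A standard check shows that $\{a_1,\dots,a_r,\tilde h_1,\dots,\tilde h_k\}$ generates $G$, and its cardinality is bounded by $n+\lceil\log_2 c(n)\rceil$. Setting $b(n):=n+c(n)$ (or any sufficiently large explicit function of $n$ and $c(n)$) then yields the first assertion.

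For the second assertion, the point is that among the generators just constructed, only the $a_i$'s can have order exceeding $b(n)$: there are at most $r\leq n$ of them. To make this airtight, one has to argue that the lifts $\tilde h_i$ can be arranged to have order bounded by $b(n)$ (for instance by observing that their projections $\bar h_i\in N$ already have order $\leq|N|\leq c(n)\leq b(n)$, and modifying each $\tilde h_i$ by a suitable element of $A$ so as to remove the $A$-component of $\tilde h_i^{\operatorname{ord}(\bar h_i)}$). This may also be phrased at the level of a minimal generating set: if $S\subset G$ is minimal, then $\langle S\cap A\rangle\leq A$ has rank $\leq n$, so minimality forces $|S\cap A|\leq n$, and the remaining generators project to an essential generating set of $N$ whose orders are controlled by $|N|$.

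The main obstacle is really bookkeeping: choosing $b(n)$ large enough, as a function of $n$ and $c(n)$, so that the lifts have order at most $b(n)$ and simultaneously so that $b(n)$ bounds the size of the generating set. No further geometry is needed beyond Theorem~\ref{intro-theorem:Jordan-klt}; the whole argument is a purely group-theoretic consequence of the Jordan-type exact sequence.
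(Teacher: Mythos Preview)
Your approach is essentially identical to the paper's: invoke Theorem~\ref{intro-theorem:Jordan-klt}, combine at most $n$ generators of $A$ with lifts of generators of $N$, and take $b(n)=n+c(n)$; the paper's own proof is in fact terser than yours and does not elaborate on the second assertion beyond asserting that ``the claim follows.''

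One small caution on the second assertion: the modification you propose for controlling the orders of the lifts (adjusting each $\tilde h_i$ by an element of $A$ to kill $\tilde h_i^{\operatorname{ord}(\bar h_i)}$) need not succeed in general. For instance, if $G=\mathbb{Z}/p^{k+1}$ with $A$ the unique index-$p$ subgroup, then every lift of a generator of $N=\mathbb{Z}/p$ has full order $p^{k+1}$, so no choice of $A$-translate brings the order below $b(n)$. The statement is still salvageable --- in such a situation the offending lift already generates $A$ as well, so it can replace one of the $a_i$'s and the count of large-order generators stays at most $n$ --- but neither your sketch nor the paper's proof spells this out.
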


Moreover, we can define the {\em rank} of a $n$-dimensional klt singularity $x\in (X, \Delta)$ to be the minimum rank of an abelian subgroup $A$ of index at most $c(n)$ in $\pi^{\rm reg}_1(X,x)$, where $c(n)$ is the constant appearing in the statement of the theorem.
We shall say that a class of singularities has rank $k$, if any singularity in such class has rank bounded by $k$.
For instance, if a class of klt singularities has rank zero, 
then the order of the local fundamental group of these singularities is bounded by a constant which only depend on the dimension.
In Example~\ref{ex:exceptional-singularities-example}, we show that the class of exceptional $n$-dimensional singularities has rank one, 
while, for any $\epsilon>0$, the class of $\epsilon$-log canonical exceptional $n$-dimensional singularities has rank zero.

As any Weil divisor of finite Cartier index induces a quasi-\'etale cover in a neighborhood of $x \in X$, Theorem~\ref{intro-theorem:Jordan-klt} implies the existence of a bound on the number of generators of the local class group of a $n$-dimensional klt singularities.

\begin{introcor} \label{thm_class_group}
Let $n$ be a positive integer.
There exists a positive integer $b=b(n)$, only depending on $n$, satisfying the following.
Let $x\in (X,\Delta)$ be a $n$-dimensional klt singularity,
then the torsion subgroup ${\rm Cl}(\O X,x.)_{\rm tor}$ of the local class group ${\rm Cl}(\O X,x.)$ is generated by at most $b(n)$ elements.
Moreover, at most $n$ generators of ${\rm Cl}(\O X,x.)_{\rm tor}$ can have order larger than $b(n)$.

The same statement holds for ${\rm Cl}(\O X,x.^{\rm hol})_{\rm tor}$, the torsion part of the analytic local class group.
\end{introcor}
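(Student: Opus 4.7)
The plan is to deduce this corollary from the bound on generators of the regional fundamental group given in Corollary~\ref{introcor-1}, using the standard identification of the torsion of the local class group with a suitable description of $\pi_1^{\rm reg}(X,x)^{\rm ab}$.

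First, I would argue that every torsion class in ${\rm Cl}(\O X,x.)$ arises from a cyclic quasi-\'etale cover of a sufficiently small neighborhood of $x$: if $D$ has class of order $m$, then the associated index-one cover $Y \to X$ is a cyclic Galois cover of degree $m$ that is \'etale over the smooth locus of $X$ near $x$. Combined with the classification of quasi-\'etale covers by $\pi_1^{\rm reg}(X,x)$ (cf.\ Definition~\ref{def:loc.fund.gr}), and the exponential sequence on a Stein neighborhood of the smooth locus, one obtains the well-known identification
\[
{\rm Cl}(\O X,x.)_{\rm tor}\;\cong\;\Hom\bigl(\pi_1^{\rm reg}(X,x)^{\rm ab},\,\cc^{*}\bigr).
\]
By Theorem~\ref{intro-thm:braun}, $\pi_1^{\rm reg}(X,x)$ is finite, so both groups in this identification are finite abelian and non-canonically isomorphic.

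Next, I would apply Corollary~\ref{introcor-1} (valid for $\pi_1^{\rm reg}(X,x)$, as $(X,0)$ remains klt whenever $(X,\Delta)$ is): this group admits a generating set of size at most $b(n)$ among which at most $n$ elements have order exceeding $b(n)$. Abelianization can only decrease the number of generators and the orders of their images, so the same numerical bounds hold for $\pi_1^{\rm reg}(X,x)^{\rm ab}$. The Pontryagin dual of a finite abelian group shares its invariant factor decomposition, so these bounds transfer to $\Hom(\pi_1^{\rm reg}(X,x)^{\rm ab},\cc^{*})$ and thus to ${\rm Cl}(\O X,x.)_{\rm tor}$.

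For the analytic statement, the key observation is that both sides of the displayed identification are intrinsically analytic: $\pi_1^{\rm reg}(X,x)$ is defined in terms of an analytic neighborhood of $x$, and the same Stein-neighborhood plus exponential-sequence argument shows that torsion in ${\rm Cl}(\O X,x.^{\rm hol})$ admits an identical description. Hence the same proof applies verbatim. I do not foresee any serious difficulty beyond making the identification of ${\rm Cl}_{\rm tor}$ with characters of $\pi_1^{\rm reg}$ precise on a sufficiently small analytic (or \'etale) neighborhood of $x$; this is standard but is where the small technicalities lie.
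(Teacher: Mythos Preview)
Your approach is essentially the paper's: relate ${\rm Cl}_{\rm tor}$ to the abelianization of $\pi_1^{\rm reg}(X,x)$ and then invoke Corollary~\ref{introcor-1}. The one substantive difference is that the paper does not assert the isomorphism ${\rm Cl}(\O X,x.)_{\rm tor}\cong\Hom(\pi_1^{\rm reg}(X,x)^{\rm ab},\cc^*)$; instead it proves only an \emph{injection} ${\rm Cl}(\O X,x.)_{\rm tor}\hookrightarrow H^2(U_0\setminus X_{\rm sing},\zz)_{\rm tor}$ via the exponential sequence (isolated as a separate lemma, using that klt singularities are rational), identifies the target with $H_1(U_0\setminus X_{\rm sing},\zz)_{\rm tor}=\pi_1^{\rm reg}(X,x)^{\rm ab}$ by the universal coefficient theorem, and then concludes by passing to a subgroup. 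Your claimed isomorphism is not quite ``well-known'' for the \emph{algebraic} local ring $\O X,x.$: a character of $\pi_1^{\rm reg}$ a priori produces only an analytic torsion class, and descent to the Zariski localization is exactly the technicality you flag at the end. Since only the injection is needed for the stated bounds, this does not actually break your argument, but the paper's cohomological packaging sidesteps the issue cleanly.
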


In view of Theorem~\ref{intro-theorem:Jordan-klt}, it is natural to wonder whether and how the existence of a large abelian subgroup $A' \leqslant \pi^{\rm loc}_1(x\in X)$ may be reflected in the geometry of the singularity $x\in X$.
In~\cite{Mor20}, the third author shows that, using the theory of complements and the $G$-equivariant MMP for Fano-type surfaces, the existence of such a subgroup $A'$ implies that the singularity can be deformed to a toric quotient singularity.
In a similar vein, we study the existence of simultaneous index one covers of $\qq$-Cartier divisors on $n$-dimensional klt singularities.
In this direction, we prove that such covers can be obtained with a cover of bounded index, followed by at most $n$ cyclic covers.
%The following theorem works in the algebraic and analytic category.

\begin{introthm}
\label{thm:effective-index-one-cover}
Let $n$ be a positive integer.
There exists a constant $b'=b'(n)$,
only depending on $n$, that satisfies the following property.
Let $x\in X$ be a $n$-dimensional klt singularity.
Then, there exists a sequence of finite quasi-\'etale covers
\[
\xymatrix{
X &
\ar[l] X_1 &
\ar[l] X_2 &
\ar[l] \dots &
\ar[l] X_{k},
}
\]
satisfying the following conditions:
\begin{itemize}
  \item $k \leq n+1$;
    \item for any $1 \leq i \leq k$, $x\in X$ has a unique preimage $x_i\in X_i$;
    \item every integral $\qq$-Cartier divisor on $X_k$ is Cartier;
    \item the morphism $X\longleftarrow X_1$ has degree at most $b'(n)$; and 
    \item for any $1 \leq i \leq k-1$, the morphism $X_i \longleftarrow X_{i+1}$ is a cyclic cover.
\end{itemize}
In particular, $x_{k}\in X_{k}$ is a canonical singularity.
\end{introthm}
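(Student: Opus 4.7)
The plan is to combine Theorem~\ref{intro-theorem:Jordan-klt} for the regional fundamental group with the Galois correspondence between finite quasi-\'etale covers of the germ $x \in X$ and finite index subgroups of $\pi_1^{\rm reg}(X,x)$, together with the standard fact that torsion classes in the local class group $\mathrm{Cl}(\O X,x.)$ are in bijection with cyclic quasi-\'etale covers via the index-one construction.

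By Theorem~\ref{intro-theorem:Jordan-klt}, there exists a normal abelian subgroup $A \trianglelefteq \pi_1^{\rm reg}(X,x)$ of rank at most $n$ and index at most $c(n)$. Setting $b'(n) := c(n)$, the Galois correspondence yields a Galois quasi-\'etale cover $X_1 \to X$ of degree $[\pi_1^{\rm reg}(X,x):A] \le c(n)$ with $\pi_1^{\rm reg}(X_1, x_1) \cong A$ and unique preimage $x_1$ of $x$. Since quasi-\'etale covers preserve the klt condition, $x_1 \in X_1$ remains klt. Now decompose $A \cong \zz/d_1 \oplus \cdots \oplus \zz/d_\ell$ with $\ell \le n$, and for $0 \le j \le \ell$ let $A_j \le A$ be the subgroup obtained by killing the first $j$ invariant factors, giving a chain $A = A_0 \supsetneq A_1 \supsetneq \cdots \supsetneq A_\ell = \{e\}$ in which each quotient $A_{j-1}/A_j$ is cyclic of order $d_j$. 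For $j = 1, \ldots, \ell$, applying the Galois correspondence to $A_j \le A_{j-1} \cong \pi_1^{\rm reg}(X_j, x_j)$ yields a cyclic quasi-\'etale cover $X_{j+1} \to X_j$ of degree $d_j$ with $\pi_1^{\rm reg}(X_{j+1}, x_{j+1}) \cong A_j$ and unique preimage of $x_j$. Setting $k := \ell + 1 \le n + 1$, the resulting tower $X \leftarrow X_1 \leftarrow \cdots \leftarrow X_k$ satisfies the first four bullet points of the theorem, and $\pi_1^{\rm reg}(X_k, x_k) = A_\ell$ is trivial.

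It remains to verify that every integral $\qq$-Cartier divisor on $X_k$ is Cartier, which will then imply that $K_{X_k}$ is Cartier and that $x_k \in X_k$ is canonical. Any nontrivial torsion class of order $m$ in $\mathrm{Cl}(\O X_k,x_k.)$ produces a nontrivial cyclic quasi-\'etale cover of degree $m$ via the index-one construction, contradicting the triviality of $\pi_1^{\rm reg}(X_k, x_k)$; hence the torsion subgroup of the local class group vanishes, proving the desired Cartier-ness.

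The deep content of the theorem is carried by Theorem~\ref{intro-theorem:Jordan-klt}, and the remaining argument is essentially a bookkeeping exercise on the invariant factors of a finite abelian group. The main technical points one must verify are the compatibility of the Galois correspondence with $\pi_1^{\rm reg}$ along the tower and the preservation of klt singularities under quasi-\'etale covers; both are already invoked elsewhere in the paper, in particular in the derivation of Corollary~\ref{thm_class_group} from Theorem~\ref{intro-theorem:Jordan-klt}, so no new difficulty beyond the Jordan property arises.
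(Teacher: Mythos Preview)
Your argument is correct but follows a genuinely different route from the paper. You invoke Theorem~\ref{intro-theorem:Jordan-klt} as a black box to obtain a normal abelian subgroup $A \trianglelefteq \pi_1^{\rm reg}(X,x)$ of rank at most $n$ and index at most $c(n)$, and then unwind $A$ via its invariant factors through the Galois correspondence. The paper instead works from the top down: it first constructs the maximal cover $\tilde X \to X$ on which every $\qq$-Cartier Weil divisor is Cartier (analytically via the universal cover of the smooth locus, algebraically via the equivariant index-one tower of Lemma~\ref{lem:com-index-one-cover}), and then analyses the Galois group $G$ of $\tilde X/X$ directly by lifting a plt blow-up. The kernel of $G \to \mathrm{Aut}(\tilde E)$ is cyclic by Corollary~\ref{group_fix_divisor}, and Birkar's Jordan property applied to the $(n-1)$-dimensional Koll\'ar component $\tilde E$ supplies a normal abelian subgroup $A' \leqslant G/H$ of rank at most $n-1$; pulling $A'$ back to $G$ gives the intermediate cover $X_1$ and the subsequent cyclic tower.

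What each approach buys: yours is shorter and conceptually cleaner once Theorem~\ref{intro-theorem:Jordan-klt} is in hand, but it produces the constant $b'(n)=c(n)$, which in the paper's proof of Theorem~\ref{intro-theorem:Jordan-klt} is of the form $f(n)!$. The paper's direct argument yields the much smaller $b'(n)=d(n-1)$ from Proposition~\ref{prop:Jordan-Bir}, and, via Lemma~\ref{lem:com-index-one-cover}, handles the fully algebraic (Zariski) setting without having to algebraize analytic covers. Your construction is naturally analytic; to run it in the algebraic category you should either appeal to Lemma~\ref{lem:com-index-one-cover} for $\tilde X$ and apply the Jordan property to its Galois group (a quotient of $\pi_1^{\rm reg}$, so the property transfers), or justify that the finite analytic covers you build are algebraizable. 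Finally, your claim that each $x_i$ is the unique preimage of $x$ is correct but deserves one line of justification: it follows by descending from the universal cover $\tilde X$, which has a single point over $x$, so every intermediate quotient $\tilde X/A_j$ does as well.
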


As the statement of Theorem \ref{thm:effective-index-one-cover} is local in nature, we may shrink $X$ around $x$ if necessary without mentioning it.
The result in Theorem~\ref{thm:effective-index-one-cover} is a refinement of~\cite{GKP}*{Theorem~1.10}
which is a statement about algebraic (equivalently, Zariski open) neighborhoods.
As explained in the proof of Theorem~\ref{thm:effective-index-one-cover}, the statement of the theorem works also in the case of an analytic germ of an algebraic singularity.

As already mentioned above, in~\cites{Xu14, TX17}, the study of the local fundamental group of a klt singularity $x \in (X, \Delta)$ was carried out by comparing the topology of a punctured neighborhood of the singularity with the topology of the Koll\'ar component $E$ extracted by a plt blow-up $Y \to X$ of the singularity.  
Such divisor $E$ carries a natural log Fano pair structure $(E, \Delta_E)$, by the adjunction formula, that is,  $\Delta_E$ is the different of $\Delta$ on $E$.
As $E$ may contain a component of the singular locus of $Y$ of codimension 2, it is necessary to consider the orbifold fundamental group $\pi_1(E^0, \Delta_E\vert_{E^0})$, where $E^0 \subset E$ is a Zariski open set where the pair is log smooth. 
Hence, the divisor $E$ plays a central role in the understanding of the fundamental group of a singularity.
At the generic point of $E$, $X$ can be thought from a topological viewpoint as a disk bundle over $E$, and the class of the loop generating the fundamental group of such a disk is one of the main ingredients in our proof, see Remark~\ref{rem:loop-plt} for a more precise definition.
Rather surprisingly, we can prove that this loop lies in the center of the regional fundamental group, a result of independent interest from the topological point of view.

\begin{introthm}\label{introthm:center}
Let $x\in(X,\Delta)$ be a klt singularity. Let $\pi\colon Y\rightarrow X$ be a plt blow-up
and $\gamma_E$ the loop around the exceptional divisor $E$.
Then, $\gamma_E \in Z(\pi_1^{\rm reg}(X,\Delta,x))$.
\end{introthm}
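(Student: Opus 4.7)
The plan is to reduce the statement to a circle-bundle computation on the Koll\'ar component $E^0$, where the centrality of the meridian class follows from the fact that the normal bundle of $E$ in $Y$ is an oriented (in fact complex) line bundle.

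First, I would replace $X$ by a sufficiently small analytic neighborhood $U$ of $x$, so that the regional fundamental group is realized as the orbifold fundamental group of $U^{\circ}:= U\setminus(\operatorname{Sing}(X)\cup\supp(\Delta))$ with orbifold weights determined by the coefficients of $\Delta$. Since $\pi$ is an isomorphism off $E$, via $\pi$ the space $U^{\circ}$ is identified with $V:= \pi^{-1}(U)\setminus(E\cup\supp(\pi^{-1}_{*}\Delta)\cup\operatorname{Sing}(Y))$. Let $E^0\subset E$ denote the intersection of $E$ with the log smooth locus of $(Y,E+\pi^{-1}_{*}\Delta)$; by plt-ness, $E\setminus E^0$ has codimension at least two in $Y$. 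Let $\Delta_E$ be the different on $E^0$. I would then fix an analytic tubular neighborhood $T$ of $E^0$ in $Y$, diffeomorphic to the unit disk bundle of the holomorphic normal line bundle $N_{E^0/Y}$, and set $T^{\circ}:= T\setminus(E^0\cup\supp(\pi^{-1}_{*}\Delta))$. Topologically, $T^{\circ}$ is the unit circle bundle of $N_{E^0/Y}$, viewed as an orbifold $S^1$-bundle over $(E^0,\Delta_E\vert_{E^0})$.

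The key technical step is to show that the inclusion $T^{\circ}\hookrightarrow V = U^{\circ}$ induces a surjection on orbifold fundamental groups. For $U$ small, $\pi^{-1}(U)$ deformation retracts onto $E$, and I expect this retraction to descend to a retraction of $V$ onto $T^{\circ}$ after discarding the codimension-two bad loci $E\setminus E^0$ and $\operatorname{Sing}(Y)$, and after removing the strict transform of $\Delta$. This is the main obstacle of the argument: producing the retraction in a way compatible with all of these analytic strata requires the Whitney-stratification toolkit developed in \S\ref{sec:whitney}.

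Once surjectivity is established, centrality of $\gamma_E$ in $\pi_1(T^{\circ})$ follows from the homotopy long exact sequence of the orbifold $S^1$-bundle $T^{\circ}\to (E^0,\Delta_E\vert_{E^0})$:
\[
\pi_1(S^1)\longrightarrow \pi_1(T^{\circ})\longrightarrow \pi_1^{\rm orb}(E^0,\Delta_E\vert_{E^0})\longrightarrow 1.
\]
By construction $\gamma_E$ is the image of the canonical generator of $\pi_1(S^1)\cong\mathbb{Z}$. The monodromy action of $\pi_1^{\rm orb}(E^0,\Delta_E\vert_{E^0})$ on $\pi_1(S^1)=\mathbb{Z}$ factors through $\operatorname{Aut}(\mathbb{Z})=\{\pm 1\}$, and it is forced to be trivial because $N_{E^0/Y}$ is a complex, hence canonically oriented, line bundle. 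Hence the image of $\pi_1(S^1)$, and in particular $\gamma_E$, lies in the center of $\pi_1(T^{\circ})$. Since centrality is preserved under surjective group homomorphisms, the surjection $\pi_1(T^{\circ})\twoheadrightarrow \pi_1^{\rm reg}(X,\Delta,x)$ sends $\gamma_E$ to a central element, as desired.
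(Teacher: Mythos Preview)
Your overall architecture matches the paper's: pass to the punctured-disk bundle $V^0\to(E^0,B^0)$ over the log smooth locus of the Koll\'ar component (this is Proposition~\ref{prop:explicit-v_0}), use the surjection $\pi_1(V^0,\Delta^0_{V,s})\twoheadrightarrow\pi_1^{\rm reg}(X,\Delta,x)$, and prove centrality of the fibre class upstairs. You are right that the surjectivity is where the Whitney machinery enters, and the paper handles it exactly there (Proposition~\ref{prop:explicit-v_0}(3)).

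Where you genuinely diverge is the centrality step. The paper does \emph{not} argue via orientation and monodromy. Instead, in Theorem~\ref{thm:plt-loop-commutes} it invokes the finiteness of $\pi_1(E^0,B^0)$ (Theorem~\ref{thm:finiteness-weakly-LF}) together with Lemma~\ref{lem:surjection-fundamental-groups-interior} to produce an open subset $E^{\rm int}_{2\epsilon}\subset E^0$ on which $\mathcal{O}_Y(E)$ is \emph{trivial} and whose fundamental group still surjects onto $\pi_1(E^0,B^0)$. Over that locus the bundle is literally a product, so the preimage has fundamental group $\mathbb{Z}\times\pi_1(E^{\rm int}_{2\epsilon})$; a Five-Lemma comparison of the two fibration sequences then shows this product surjects onto $\pi_1(V^0,\Delta^0_{V,s})$, and the obvious centrality of the $\mathbb{Z}$-factor descends. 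Your route---trivial monodromy of an oriented $S^1$-bundle on $\pi_1(S^1)\cong\mathbb{Z}$---is more direct and, notably, does not use the finiteness of $\pi_1(E^0,B^0)$ at all; it is essentially the classical fact that the regular fibre of an orientable Seifert fibration is central. One point to tighten: $N_{E^0/Y}$ is an honest line bundle only where $Y$ is smooth along $E^0$; along the different locus it is merely an orbifold line bundle, so you should say why the extra orbifold generators of $\pi_1^{\rm orb}(E^0,B^0)$ also act orientation-preservingly on the fibre. This holds because the local models are quotients by finite cyclic groups acting holomorphically (cf.\ the proof of Proposition~\ref{prop:explicit-v_0}), hence by rotations of the fibre.
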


Moreover, it is useful to understand the center of the regional fundamental group of a klt singularity, in order to find the largest possible normal abelian subgroup of $\pi_1^{\rm reg}(X,\Delta,x)$ satisfying the properties of Theorem~\ref{intro-theorem:Jordan-klt}.
Indeed, in the statement of the theorem we can always replace $A$ by the larger abelian subgroup generated by $A$ and the center of the regional fundamental group.

\subsection*{Structure of the paper}
The structure of the paper is as follows: in \S\ref{sec:prelim}, we introduce the basic definitions and preliminary results that will be used in the rest of the paper;
in \S\ref{sec:jordan}, we show that the Jordan property holds for the orbifold fundamental group of $n$-dimensional log Fano pairs with standard coefficients.
In \S\ref{sec:whitney}, we recall some results on the Whitney stratification of algebraic varieties and show that given a plt blow-up of a klt singularity, there exists a loop in $\pi^{\rm reg}_1(X,x)$ that is naturally associated to this construction and that is contained in the center $Z(\pi^{\rm reg}_1(X,x))$ of the regional fundamental group.
In \S\ref{sec:proof-jordan}, we prove the Jordan property for the regional fundamental group of $n$-dimensional klt singularities.
In \S\ref{sec:simultaneous-index}, we prove the first application to effective simultaneous index one covers.
In \S\ref{sec:local-class}, we prove the second application to the local class group of $n$-dimensional klt singularities.
Finally, in \S\ref{sec:ex}, we give some examples and further questions.

\subsection*{Acknowledgements} 
The authors would like to thank 
Javier Carvajal-Rojas,
J\'anos Koll\'ar, 
Mirko Mauri,
Thomas Peternell,
Yuri Prokhorov, and
Chenyang Xu for many useful comments.
We wish to thank the anonymous referee for useful comments and suggestions that helped the authors improve the exposition.

Part of this work was completed during a visit of RS to the Princeton University. 
RS would like to thank the Princeton University for the hospitality and the nice working environment, and Gabriele Di Cerbo for funding his visit.
RS is supported by the European Union's Horizon 2020 research and innovation programme under the Marie Sk\l{}odowska-Curie grant agreement No. 842071.
LB is partially supported by the DFG-Graduiertenkolleg GK1821 "Cohomological Methods in Geometry" at the University of Freiburg.

\section{Preliminaries}
\label{sec:prelim}
In this section, we collect some preliminary results about 
local fundamental groups, the minimal model program, 
and the Jordan property.

\begin{definition}
{\em
Let $X$ be a quasi-projective algebraic variety and
$\Delta$ an effective $\qq$-divisor on $X$, we say that 
\begin{enumerate}
\item 
$(X,\Delta)$ is a \emph{pair} if $K_X+\Delta$ is $\qq$-Cartier.
\item 
$(X, \Delta)$ has \emph{standard coefficients} if the coefficients of $\Delta$ belong to the set $\lbrace 1-\frac{1}{n}| n \in \nn \rbrace$.
\item
A pair $(X,\Delta)$ is {\em log smooth} if $X$ is smooth and $\Delta$ has simple normal crossing support.
\end{enumerate}
}
\end{definition}

Given a projective birational morphism 
$\pi \colon Y \rightarrow X$ from a normal variety $Y$ 
and a prime divisor $E\subset Y$, the {\em log discrepancy} $a_E(X,\Delta)$ of $(X,\Delta)$ at $E$ is
\[
a_E(X,\Delta) \coloneqq
1-{\rm coeff}_E(K_Y-\pi^*(K_X+\Delta)),
\]
where the canonical divisor $\K Y.$ satisfies $\pi_*(\K Y.)= \K X.$.

\begin{definition}
{\em
Let $(X,\Delta)$ be a log pair, we say that
\begin{enumerate}
    \item 
$(X,\Delta)$ is {\em log canonical} (resp. {\em Kawamata log terminal}) if for any birational morphism $\pi \colon Y\to X$ and any prime divisor $E$ on $Y$, $a_E(X,\Delta) \geq 0$ (resp., $a_E(X,\Delta) >0$).
    \item
$(X,\Delta)$ is \emph{purely log terminal} (in short, plt), if for every divisor $E \subset Y$ that is exceptional over $X$, we have $a_E(X,\Delta) > 0$.
\end{enumerate}
}
\end{definition}

As usual, we will abbreviate log canonical (resp., Kawamata log terminal) with lc (resp., klt).

If $(X, \Delta)$ is a plt pair, then every component of $\lfloor \Delta \rfloor$ is a normal prime divisor in $X$ satisfying $a_E(X,\Delta)=0$, and any two such divisors must be disjoint, see~\cite{KM98}*{Proposition~5.51}.

The following definition, first introduced in~\cite{P98}*{Definition~2.1}, identifies certain special types of birational extractions that naturally yield a plt pair over a klt singularity.

\begin{definition}
{\em 
Let $(X,\Delta)$ be a klt pair and $x\in X$ be a closed point.
A projective birational morphism
$\pi \colon Y \rightarrow X$ of normal varieties is a {\em purely log terminal blow-up} (or plt blow-up for short) if 
\begin{enumerate}
    \item 
$\pi$ extracts a unique prime divisor $E$ with $\pi(E)=x$;
    \item 
the pair $(Y,\Delta_Y+E)$ is plt, where $\Delta_Y$ is the strict transform of $\Delta$ on $Y$; and
    \item 
$-E$ is ample over $X$.    
\end{enumerate}
}
\end{definition}

The exceptional component $E$ of a purely log terminal blow-up is also indicated in the literature with the name of \emph{Koll\'ar component}, cf., for example,~\cite{Xu14}*{p. 412}.
The existence of plt blow-ups is now a well established fact, see~\cite{Xu14}*{Lemma 1}.

\begin{definition}{\em
Let $(X,\Delta)$ be a projective pair.
The pair $(X,\Delta)$ is {\em log Fano} if it is klt and $-(\K X. + \Delta)$ is ample.
The pair $(X,\Delta)$ is {\em weakly log Fano} if it is klt and $-(K_X+\Delta)$ is nef and big.
The variety $X$ is said to be of {\em Fano type} if there exists a boundary $\Delta$ on $X$ so that
$(X,\Delta)$ is log Fano.
A log pair $(X,\Delta)$ is said to be of 
{\em weakly log Fano type} if there exists a boundary $\Delta'\geq \Delta$ so that $(X,\Delta')$ is weakly log Fano.
}
\end{definition}

We remind the reader that varieties of weakly log Fano type are Mori dream spaces, see~\cite{BCHM}*{Corollary 1.4.3}.

\begin{definition}
{\em 
Let $f \colon X \rar Z$ be a morphism between normal varieties.
The morphism $f$ is said to be \emph{quasi-\'etale} if it is quasi-finite and \'etale in codimension one.
}
\end{definition}

\begin{definition}
{\em 
Let $f \colon X \rar Z$ be a morphism between normal varieties.
The morphism $f$ is said to be \emph{Galois} if it is finite, surjective, and there exists a finite group $G \leqslant \mathrm{Aut}(X)$ so that $f$ coincides with the quotient map $X \rar X/G$.
}
\end{definition}

\begin{definition}
{\em 
Let $x\in X$ be a closed point on a complex algebraic variety.
The {\em local fundamental group} $\pi_1^{\rm loc}(X,x)$ of $X$ at $x$
is the inverse limit of the fundamental groups of
punctured analytic neighborhoods of $x$ on $X$.
The {\em regional fundamental group}
$\pi^{\rm reg}(X,x)$
is the inverse limit of the fundamental groups of the intersection of analytic neighborhoods of $x$ in $X$ with the smooth locus of $X$.
}
\end{definition}

It is not hard to see that the inverse limit in the definition of $\pi_1^{\rm loc}(X,x)$ (resp.
$\pi_1^{\rm reg}(X,x)$) is computed by some analytic open neighborhood
$x\in U\subset X$, so that $\pi_1^{\rm loc}(X,x)\simeq \pi_1(U^0)$
(resp. $\pi_1^{\rm reg}(X,x)=\pi_1(U\cap X^{\rm reg})$),
where $U^0=U\setminus \{x\}$.

Now, we turn to introduce the fundamental group of a log pair, generalizing the usual fundamental group of a topological space. 
Essentially, the fundamental group of a log pair will be the fundamental group of an underlying orbifold structure which is dominated by the pair structure.
First, we recall the notion of orbifold fundamental group.

\begin{definition}
\label{def:orb.fund.gp}
{\em
Let $(X,\Delta)$ be a log smooth pair with standard coefficients,
i.e.,
we can write 
\[
\Delta=\sum_{i=1}^d \left(1-\frac{1}{n_i}\right)P_i,
\]
where the $P_i$ are pairwise distinct prime divisors.
The {\em orbifold fundamental group} of $(X,\Delta)$, denoted by $\pi_1(X,\Delta)$, is the fundamental group of the Deligne--Mumford stack
$(\mathcal{X},\mathcal{D})$ whose coarse moduli space is $(X,\Delta)$.
Let $(X,\Delta)$ be a log pair. 
We define the {\em standard approximation} of $\Delta$, denoted by $\Delta_s$, to be the largest effective divisor with standard coefficients so that $\Delta_s\leq \Delta$.
Note that $\Delta_s=0$ if and only if all the coefficients of $\Delta$ are smaller than $\frac{1}{2}$.
We define the fundamental group of a log smooth pair $(X,\Delta)$ to be
$\pi_1(X,\Delta) \coloneqq \pi_1(X,\Delta_s)$, where the latter is the orbifold fundamental group.
}
\end{definition}

\begin{remark}
\label{rem:loop}
{\em
Given a log smooth pair $(X, \Delta)$ with standard coefficients, $\Delta=\sum_{i=1}^d (1-\frac{1}{n_i})P_i$, for each prime divisor $P_j$ in the support of $\Delta$ with coefficient $1-\frac{1}{n_j}$, 
it is possible to associate a class $\gamma_j \in \pi_1(X \setminus {\rm Supp}(\Delta))$ to the pair $(P_j, n_j)$:
the class $\gamma_j$ is the class of a loop in the normal circle bundle of $P_j \setminus (\cup_{i \neq j} P_i) \subset X \setminus {\rm Supp}(\Delta)$ generating the fundamental group of the fibre of the bundle structure.
We refer to any loop in the class $\gamma_j$ as {\it the loop around the prime divisor $P_j$}; abusing notation, we will use the same expression also for the class $\gamma_j \in \pi_1(X \setminus {\rm Supp}(\Delta))$, when the context permits it.
The inclusion of $X \setminus {\rm Supp}(\Delta)$ in the coarse moduli space $(X, \Delta)$ of the orbifold $(\mathcal{X}, \mathcal{D})$ induces a surjective morphism
\[
\pi_1(X\setminus {\rm Supp}(\Delta)) \rightarrow \pi_1(X,\Delta) \rightarrow 1,
\]
whose kernel is the normal subgroup generated by the classes $\gamma_1^{n_1},\dots,\gamma_k^{n_k}$, see~\cite{Cam11}*{12.2}.
}
\end{remark}

\begin{definition}
\label{def:loc.fund.gr}
{\em 
Let $(X,\Delta)$ be a log pair 
and $x\in X$ be a closed point.
For each prime component $P$ 
of $\Delta_s$ through $x$,
we denote by $\gamma_P$ the loop around $P$ and by $n_P$ the positive integer so that $\gamma_P$ appears with coefficient $1-\frac{1}{n_P}$ on $\Delta_s$.
We define the group
$\pi_1(U,\Delta|_U)$ to be
the quotient 
of $\pi_1(U\setminus \Supp(\Delta))$ by the normal subgroup generated by the elements $\gamma_P^{n_P}$.
We can define the {\em local fundamental group of a pair}
$\pi_1^{\rm loc}(X,\Delta,x)$ of $X$ at $x$ to be the inverse limit of $\pi_1(U,\Delta|_U)$ where $U$ runs over all punctured analytic neighborhoods of $x$ on $X$.
Analogously, we can define the {\em regional fundamental group of a pair}
$\pi_1^{\rm reg}(X,\Delta,x)$ to be the inverse limit of 
$\pi_1(U,\Delta_U)$ where $U$ runs over all analytic neighborhoods of $x$ in $X$ intersected with the smooth locus of $X$.
}
\end{definition}

Let us note that the finite covers corresponding to normal subgroups of finite index of $\pi_1(X,\Delta)$ may ramify in codimension one, 
but their ramification is only concentrated along the support of $\Delta$ and the order is controlled by the coefficients of $\Delta$.
Indeed, by the Riemann--Hurwitz formula, such covers ramify with order at most $n_i$ over those components of $\Delta$ of coefficient $1-\frac{1}{n_i}$.

%\subsection{Jordan property for groups}
In \cite{Pop}*{Definition~2.1}, Popov defined the Jordan property for groups.
Here, we generalize that definition to a collection of groups, in the following way.

\begin{definition}
{\em 
Let $\mathcal{G}$ be a class of groups.
Then $\mathcal G$ is said to satisfy the Jordan property, if there exists a constant $k=k(\mathcal G)$, so that for every finite group $G\in \mathcal{G}$, $G$ contains a normal abelian subgroup of index at most $k$.
Furthermore, if the normal abelian subgroup has rank at most $r$, we say that the class $\mathcal{G}$ satisfy the Jordan property with rank $r$.
}
\end{definition}

\begin{lemma} \label{lemma_cyclic}
Let $G \subset GL_n(\cc)$ be a finite subgroup.
Assume that there exists a hyperplane $V \subset \cc^n$ such that $G$ acts trivially on $V$.
Then, $G$ is a cyclic group.
\end{lemma}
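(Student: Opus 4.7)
The plan is to show that $G$ injects into $\cc^*$ via the determinant character, from which cyclicity follows since every finite subgroup of $\cc^*$ is cyclic.

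First I would fix a basis $v_1, \dots, v_{n-1}, w$ of $\cc^n$ with $V = \langle v_1, \dots, v_{n-1}\rangle$. Since every $g \in G$ fixes $V$ pointwise, in this basis $g$ has block form
\[
g = \begin{pmatrix} I_{n-1} & u_g \\ 0 & \lambda_g \end{pmatrix}
\]
for some $u_g \in \cc^{n-1}$ and $\lambda_g \in \cc^{*}$. In particular $\det(g) = \lambda_g$, so the determinant defines a group homomorphism $\chi \colon G \to \cc^{*}$, $g \mapsto \lambda_g$.

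Next, I would show that $\chi$ is injective. If $\chi(g) = 1$, then the matrix of $g$ is upper triangular with all diagonal entries equal to $1$, hence unipotent. On the other hand, $g$ has finite order (as $G$ is finite), so $g$ is diagonalizable. The only diagonalizable unipotent matrix is the identity, so $g = \mathrm{id}$. Therefore $\ker \chi$ is trivial.

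Consequently $G$ embeds as a finite subgroup of $\cc^{*}$, and any finite subgroup of $\cc^{*}$ is cyclic (it is contained in the group of roots of unity). This yields the conclusion. No step here looks like a real obstacle; the only subtlety worth stating carefully is the unipotent-plus-diagonalizable argument, which is what forces $u_g = 0$ whenever $\lambda_g = 1$.
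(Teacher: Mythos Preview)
Your argument is correct. Both your proof and the paper's embed $G$ into $\cc^*$ and conclude cyclicity from there, but the mechanisms differ slightly. The paper invokes complete reducibility (finiteness of $G$, hence reductivity) to produce a $G$-invariant one-dimensional complement $W$ to $V$; then $G$ acts on $V\oplus W$ trivially on the first factor and via a character on the second, and faithfulness of the original action forces this character to be injective. You instead work directly with the determinant character and handle the kernel by the observation that a finite-order unipotent matrix is the identity. Your route is a touch more elementary since it avoids Maschke's theorem, while the paper's is shorter because the invariant splitting makes injectivity immediate without any matrix computation.
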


\begin{proof}
As $G$ is finite, hence reductive, there exists a 1-dimensional subspace $W \subset \cc^n$ such that $\mathbb C^n = V \oplus W$ and the action of $G$ on $W$ is faithful, as $G \subset GL_n(\cc)$, which then proves the claim.
\iffalse
Let $(v_1,\dots,v_{n-1})$ a basis for $V$, and let $w \not \in V$.
Then, with the basis $(v_1,\dots,v_{n-1},w)$, every element $g \in G$ is represented by a matrix
\[
A(g)=\begin{pmatrix}
1 & 0 & \dots & 0 & a_1(g)\\
0& 1 & \dots & 0 & a_2(g)\\
\vdots & \ddots & \ddots & \vdots & \vdots\\
0 & 0 & \dots & 1 & a_{n-1}(g)\\
0 & 0 & \dots & 0 & a_n(g)
\end{pmatrix}
\]
where the upper left minor is a $(n-1)\times (n-1)$ identity matrix, and the last row has only one non-zero entry (i.e., $a_n(g)$).
Notice that $a_n(g)$ is an eigenvalue of $A(g)$.
As $A(g)$ has finite order, it is diagonalizable, and $a_n(g)$ is a root of unity.
In particular, $\lbrace a_n(g)| g \in G \rbrace$ is a cyclic group.
%The induced action of $G$ on $\cc ^n / V$ can be identified as $g \colon x \mapsto a_n(g) \cdot x$.
%Therefore, $\lbrace a_n(g)| g \in G \rbrace$ is a cyclic group.
%Therefore, $a_n(g)$ is a root of unity.
Recall that the only diagonalizable matrix whose eigenvalues are all $1$ is the identity.
Thus, if $g \neq e$, we have $a_n(g) \neq 1$.
Let $g_1,g_2 \in G$ with $a_n(g_1)=a_n(g_2)$.
It follows that $a_n(g_1 \cdot g_2^{-1})=1$.
Hence, we have $g_1=g_2$.
By considering $g_0 \in G$ such that $a_n(g_0)$ generates $\lbrace a_n(g)| g \in G \rbrace$, it follows that $\langle g_0 \rangle \simeq G \simeq \lbrace a_n(g)| g \in G \rbrace$.
\fi
\end{proof}

\begin{corollary} \label{group_fix_divisor}
Let $X$ be a normal variety, and let $D$ be a prime divisor on $X$.
Let $G \subset \mathrm{Aut}(X)$ be a finite subgroup fixing $D$ pointwise.
Then, $G$ is a cyclic group.
\end{corollary}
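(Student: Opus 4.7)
The plan is to reduce the statement to the linear algebra content of Lemma \ref{lemma_cyclic} by linearizing the $G$-action at a well-chosen fixed point on $D$. Specifically, I would first choose a point $p \in D$ at which both $X$ and $D$ are smooth. Such a point exists because $X$ is normal, hence smooth in codimension one, so $X^{\rm reg} \cap D$ is a nonempty open subset of $D$; its intersection with the smooth locus of the reduced irreducible divisor $D$ is then a nonempty open subset of $D$, and any such point $p$ is a smooth point of both $X$ and $D$. Since $G$ fixes $D$ pointwise, $p$ is a fixed point of the $G$-action.

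Next I would pass to the tangent representation $\rho \colon G \to GL(T_p X)$. Because $G$ is finite and fixes the smooth point $p \in X$, the standard Cartan-type linearization (e.g.\ averaging a local system of parameters over $G$) yields $G$-equivariant formal, hence analytic, coordinates at $p$ identifying a neighborhood of $p$ with a neighborhood of the origin in $T_p X$ on which $G$ acts by $\rho$. In particular, $\rho$ is \emph{faithful}: any element of $G$ acting trivially on $T_p X$ would act as the identity on the formal completion $\widehat{\mathcal{O}}_{X,p}$ and thus as the identity on the connected variety $X$.

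Now I would exploit the hypothesis that $G$ fixes $D$ pointwise. Since $D$ is smooth at $p$, its tangent space $V \coloneqq T_p D \subset T_p X$ is a hyperplane. For every $g \in G$ and every tangent vector $v \in V$, choose an analytic arc $\gamma \colon (\mathbb{C},0) \to (D,p)$ with $\gamma'(0)=v$; since $g$ fixes $\gamma$ pointwise we have $\rho(g)\cdot v = (g\circ\gamma)'(0)= \gamma'(0) = v$. Hence $G$ acts trivially on the hyperplane $V \subset T_p X \cong \mathbb{C}^n$. Identifying $G$ with its faithful image $\rho(G) \subset GL_n(\mathbb{C})$, Lemma \ref{lemma_cyclic} applies and yields that $G$ is cyclic.

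The only potentially delicate step is the faithfulness of $\rho$ together with the existence of the $G$-equivariant linearization at $p$; everything else is formal. This is a classical fact for finite group actions on smooth analytic germs, so I do not foresee a genuine obstacle, but it is the one place where normality of $X$ and smoothness of $p$ are essential.
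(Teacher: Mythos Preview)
Your proof is correct and follows essentially the same route as the paper: choose a point $p\in D$ where both $X$ and $D$ are smooth, linearize the $G$-action on $T_pX$ (the paper cites \cite{FZ05}*{Lemma~2.7.(b)} for faithfulness where you sketch the Cartan-type argument), observe that $G$ acts trivially on the hyperplane $T_pD$, and apply Lemma~\ref{lemma_cyclic}. The only difference is that you spell out the faithfulness and the arc argument in more detail than the paper does.
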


\begin{proof}
Let $x \in D \subset X$ be a closed point such that both $X$ and $D$ are smooth at $x$.
Then, by \cite{FZ05}*{Lemma~2.7.(b)}, $G$ acts faithfully on $T_xX \simeq \cc^n$.
By assumption, $G$ acts trivially on $T_xD \subset T_xX$.
Then, by Lemma~\ref{lemma_cyclic}, $G$ is a cyclic group.
\end{proof}

\begin{lemma} \label{lemma_rank}
Let $A$ be an abelian group of finite rank, and let $B$ be a subgroup.
Then, $\mathrm{rank}(B) \leq \mathrm{rank}(A)$.
\end{lemma}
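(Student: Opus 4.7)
The plan is to prove the bound by lifting $A$ to a finitely generated free abelian group of matching rank and exploiting that subgroups of free abelian groups of finite rank are themselves free of bounded rank. Recall that in this paper the rank of an abelian group denotes its minimal number of generators.

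Set $r := \mathrm{rank}(A)$ and pick a generating set $\{a_1, \dots, a_r\}$ of $A$ of minimal cardinality; this yields a surjective homomorphism $\varphi \colon \mathbb{Z}^r \twoheadrightarrow A$ sending each standard basis element $e_i$ to $a_i$. The main step I would then take is to consider the preimage $B' := \varphi^{-1}(B)$. As a subgroup of the finitely generated free abelian group $\mathbb{Z}^r$, the group $B'$ is itself free of rank $r'\leq r$, by the classical structure theorem for subgroups of free abelian groups of finite rank (no new ingredients needed: it is the standard fact from elementary group theory).

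Since $\varphi$ restricts to a surjection $B' \twoheadrightarrow B$, the images of a free basis of $B'$ under $\varphi$ provide a generating set of $B$ of cardinality at most $r'$. Hence $\mathrm{rank}(B) \leq r' \leq r = \mathrm{rank}(A)$, which is the desired inequality.

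As the argument is a direct formal consequence of the structure theory of subgroups of free abelian groups, I do not expect any technical obstacle; the only care needed is in the definition of rank used (minimal number of generators), which is consistent with the convention fixed in the introduction of the paper.
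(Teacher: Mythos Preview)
Your argument is correct. The paper's printed proof is the single line ``the statement follows at once by tensoring the natural injection $B \hookrightarrow A$ with $\mathbb{Q}$,'' which is a genuinely different (and in fact inadequate) route: under the paper's convention that rank means the minimal number of generators, tensoring with $\mathbb{Q}$ only controls the torsion-free rank $\dim_{\mathbb{Q}}(-\otimes\mathbb{Q})$ and says nothing about torsion, whereas the lemma is applied in Corollary~\ref{thm_class_group} precisely to finite torsion groups. Your lifting through $B'=\varphi^{-1}(B)\subset\mathbb{Z}^r$ together with the structure theorem for subgroups of $\mathbb{Z}^r$ is the standard correct argument; it is essentially the approach of a longer proof that appears commented out in the paper's source (there split into free and torsion parts, with the torsion part handled exactly by your lifting trick), and your version is cleaner in treating both parts uniformly.
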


\begin{proof}
The statement follows at once by tensoring the natural injection $B \hookrightarrow A$ with $\qq$.
\iffalse
We can write $A=A_{\rm free} \bigoplus A_{\rm tor}$ and $B=B_{\rm free} \bigoplus B_{\rm tor}$, and the injection $B \hookrightarrow A$ respects this decomposition.
The inequality involving the rank of the free parts is well-known, see \cite{Lan02}*{Theorem 7.3}.
Thus, it suffices to show $\mathrm{rank}(B_{\rm tor}) \leq \mathrm{rank}(A_{\rm tor})$.
Let $r=\mathrm{rank}(A_{\rm tor})$.
Then, there is a surjection $\mathbb{Z}^r \rar A_{\rm tor}$.
Let $H$ be the preimage of $B_{\rm tor}$ under this homomorphism.
Then, by \cite{Lan02}*{Theorem 7.3}, we have
\[
\mathrm{rank}(B_{\rm tor}) \leq \rank(H) \leq r = \rank(A_{\rm tor}).
\]
Thus, the claim follows.
\fi
\end{proof}

\section{Jordan property of global fundamental groups}
\label{sec:jordan}

In this section, we prove the Jordan property for the fundamental group of the log smooth locus of Fano type varieties.

The following theorem is well-known.
It was proved in~\cite{PS14}, under the assumption that the BAB conjecture holds. The BAB conjecture was proved in~\cites{Bir16a,Bir16b}.

\begin{proposition}[\cite{PS14}*{cf. Theorem 1.8}]\label{prop:Jordan-Bir}
Let $n$ be a positive integer.
There exists a constant $d=d(n)$,
only depending on $n$, 
satisfying the following.
Let $X$ be a $n$-dimensional rationally connected variety of dimension $n$.
Then, for every finite subgroup $G\leqslant {\rm Bir}(X)$,
there exists a normal abelian subgroup $A\leqslant G$ of rank $n$ and index at most $d(n)$.
\end{proposition}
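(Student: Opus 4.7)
The plan is to follow the Prokhorov--Shramov strategy: combine $G$-equivariant MMP with the boundedness of Fano varieties (BAB) to reduce to Jordan's classical theorem for finite subgroups of linear algebraic groups, and then run an induction on dimension. I would proceed by induction on $n$, the base case $n=1$ being trivial (all rationally connected curves are $\pp^1$ and $\mathrm{PGL}_2(\cc)$ is Jordan).

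For the inductive step, fix a finite $G \leqslant \mathrm{Bir}(X)$. First I would regularize the action: by $G$-equivariant resolution of singularities, we may assume that $G$ acts biregularly on a smooth projective $X'$ birational to $X$, which is still rationally connected. Since $X'$ is RC, a standard $G$-equivariant MMP terminates in a $G$-equivariant Mori fiber space $f \colon Y \to Z$, with $Y$ $\qq$-factorial and having terminal (hence klt) singularities, and with $-K_Y$ $f$-ample. There is a natural short exact sequence
\[
1 \to G_Y \to G \to G_Z \to 1,
\]
where $G_Z$ is the image of $G$ in $\mathrm{Bir}(Z)$ and $G_Y$ is the kernel, i.e.\ the subgroup acting trivially on $Z$. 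Since $Z$ is rationally connected of dimension strictly less than $n$ (and a point if $Y$ is Fano), induction gives a normal abelian subgroup $A_Z \trianglelefteq G_Z$ of rank at most $\dim Z$ and index at most $d(\dim Z)$, and after replacing $G$ by the preimage of $A_Z$ we may assume $G_Z$ is itself abelian of controlled rank and that $G$ has controlled index in its original incarnation.

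The main work then concentrates on $G_Y$, which acts fiberwise. Restricting to the generic fiber $Y_\eta$, which is a Fano variety of dimension $\leq n$ with klt singularities (after base change, we may work over an algebraic closure), Birkar's theorem~\cite{Bir16a,Bir16b} implies that such fibers lie in a bounded family depending only on $n$. Boundedness provides a very ample line bundle of uniformly bounded degree, and hence an embedding of each such fiber into a projective space $\pp^N$ of dimension $N = N(n)$; consequently $\mathrm{Aut}(Y_\eta) \hookrightarrow \mathrm{PGL}_{N+1}(\cc)$ is a linear algebraic group of bounded dimension. Hence $G_Y$ embeds into a linear algebraic group of dimension bounded in terms of $n$, and the classical Jordan theorem for linear algebraic groups (a consequence of Jordan's theorem for $\mathrm{GL}_m$) produces a normal abelian subgroup $A_Y \trianglelefteq G_Y$ of index bounded by a constant depending only on $n$; moreover, since finite abelian subgroups of $\mathrm{Aut}(Y_\eta)$ land (up to bounded index) in a maximal torus of $\mathrm{PGL}$, the rank is controlled, and a finer analysis via equivariant birational geometry shows the rank can in fact be taken $\leq \dim Y_\eta$.

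Finally, I would assemble an abelian normal subgroup of the original $G$ from $A_Y$ and $A_Z$: after replacing $G$ by a subgroup of controlled index, it fits into an extension of abelian groups, and by enlarging the index by a further constant (for instance, passing to commutators and applying another round of Jordan to kill the extension class) we obtain a genuine abelian normal subgroup whose rank is at most $\dim Y_\eta + \dim Z = n$. The hard part, and the heart of the argument, is of course the appeal to BAB in the previous paragraph: without the boundedness of klt Fano varieties of fixed dimension one has no \emph{a priori} control on $\mathrm{Aut}(Y_\eta)$, and the induction collapses. All the other pieces---equivariant resolution, equivariant MMP for RC varieties, Jordan's theorem for linear groups, and the bookkeeping of ranks through the extension---are essentially formal once BAB is in hand.
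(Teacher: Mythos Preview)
The paper's proof is considerably more direct: after regularizing the $G$-action and taking a $G$-equivariant resolution $X''$, it invokes the fixed-point result \cite{PS16}*{Theorem~4.2} (which relies on BAB) to produce a subgroup $F \leqslant G$ of index at most $r(n)$ fixing a smooth point $x'' \in X''$. Then $F$ acts faithfully on $T_{x''}X'' \cong \cc^n$, so $F \hookrightarrow {\rm GL}_n(\cc)$; classical Jordan gives an abelian $A_0 \leqslant F$ of rank at most $n$ and bounded index, and one finishes by passing to the normal core of $A_0$ in $G$. The rank bound of $n$ comes for free from the embedding into ${\rm GL}_n$.

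Your induction via a $G$-Mori fiber space is morally how one \emph{proves} the fixed-point theorem in \cite{PS16}, but you try to bypass the fixed point and assemble abelian subgroups directly through the extension $1 \to G_Y \to G \to G_Z \to 1$, and this has two real gaps. First, in the base case $\dim Z = 0$ (so $Y$ is Fano of dimension $n$), BAB only gives an embedding $\mathrm{Aut}(Y) \hookrightarrow {\rm PGL}_{N+1}(\cc)$ with $N = N(n)$ typically much larger than $n$; Jordan for ${\rm PGL}_{N+1}$ then yields rank $\leq N$, and your ``finer analysis'' to bring this down to $\dim Y_\eta$ is precisely the fixed-point argument you are skipping. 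Second, and more seriously, the assembly step fails as written: an extension of a finite abelian group by a finite abelian group need not contain an abelian subgroup of uniformly bounded index---extraspecial $p$-groups of order $p^{2k+1}$ sit in an exact sequence $1 \to \zz/p \to G \to (\zz/p)^{2k} \to 1$, yet every abelian subgroup of $G$ has index at least $p^k$. The standard extension lemma for the Jordan property requires one of the two factors to have bounded \emph{order}, not merely to be Jordan, and neither $G_Y$ nor $G_Z$ does here. The correct inductive payload is a fixed point rather than an abelian subgroup: once a bounded-index subgroup fixes a smooth point on an $n$-fold, the tangent-space embedding into ${\rm GL}_n$ delivers both the index bound and the rank-$n$ bound in one stroke.
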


The bound on the rank of $A$ is not explicitly mentioned in~\cite{PS14}.
However, its existence follows directly from the proof of~\cite{PS16}*{Theorem~4.2}, as we illustrate below.

\begin{proof}
The group $G$ acts on $X$ as a birational automorphism group.
By~\cite{Sum74}*{Theorem~3}, we may take a regularization of the action of $G$ on $X$,
i.e., a birational model on which $G$ acts as an automorphism group.
We denote by $X'$ such regularization.
Furthermore, we may take a $G$-equivariant resolution of singularities of $X'$ (e.g., see~\cite{AW97}*{Theorem~0.1}).
We call this resolution of singularities $X''$.
Hence, we have a rationally connected smooth projective variety 
on which $G$ acts as an automorphism group.
By~\cite{PS16}*{Theorem~4.2} and~\cite{Bir16b}*{Theorem~1.1}, we know that there exists a constant $r(n)$, only depending on $n$,
so that $G$ contains a subgroup $F \leqslant G$ of index at most $r(n)$,
and $F$ has a fixed point on $X''$.
We denote such point by $x''$.
In particular, $F$ acts faithfully on the tangent space of $X''$ at $x''$ (e.g., see \cite{PS14}*{proof of Theorem~4.2}),
so $F$ acts faithfully on $\cc^n$.
Hence, $F$ is a finite subgroup of ${\rm GL}_n(\cc)$.
In particular, it contains an abelian subgroup $A_0$ of rank at most $n$ 
and whose index in $F$ is at most $s(n)$, where $s(n)$ only depends on $n$.
We conclude that $A_0$ is an abelian subgroup
of $G$ of rank at most $n$
and index at most $t(n)\coloneqq r(n)s(n)$ (which still only depends on $n$).
Let $A$ be the maximal normal subgroup of $G$ contained in $A_0$, that is, the normal core of $A_0$ in $G$.
Then, by standard properties of the normal core, the index of $A$ on $G$ is at most $d(n)=t(n)!$.
Furthermore, since $A_0$ is abelian with rank at most $n$, it follows that $A$
has rank at most $n$.
\end{proof}

The following proposition proves the Jordan property for the fundamental group of the smooth locus of $n$-dimensional log Fano pairs.

\begin{proposition}\label{prop:Jordan-global-fundamental}
Let $n$ be a positive integer.
Then, there exists a constant $e(n)$, only depending on $n$, satisfying the following.
Let $(E,\Delta)$ be a $n$-dimensional log Fano pair with $\Delta$ having standard coefficients.
Let $E^0$ be a big open subset of $E$ on which $(E,\Delta)$ is log smooth.
Then, there exists a normal abelian subgroup
$A\leqslant \pi_1(E^0,\Delta^0)$ of rank at most $n$ and index at most $e(n)$.
\end{proposition}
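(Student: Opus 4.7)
The plan is to combine Braun's finiteness theorem for orbifold fundamental groups of log Fano pairs (Theorem~\ref{intro-thm:braun}) with the Jordan property for birational automorphism groups of rationally connected varieties (Proposition~\ref{prop:Jordan-Bir}), bridged by an orbifold covering construction. First, since $(E,\Delta)$ is log Fano and $\Delta$ has standard coefficients, Theorem~\ref{intro-thm:braun} applied with $\Delta'=\Delta$ and $\Delta''=0$ immediately gives that $G := \pi_1(E^0,\Delta^0)$ is finite.

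Next, I would promote $G$ to the Galois group of a geometric cover of $E$ itself. Writing $\Delta = \sum_{i=1}^d (1-\frac{1}{n_i})P_i$, the finite quotient $G$ of $\pi_1(E^0\setminus \Supp(\Delta^0))$ singled out by the orbifold relations $\gamma_i^{n_i}=1$ corresponds to a connected finite \'etale Galois cover of $E^0\setminus \Supp(\Delta^0)$ with group $G$. Normalising $E$ in this cover and using that $E^0\subset E$ is a big open subset (so $E\setminus E^0$ has codimension $\geq 2$ and contributes no further ramification, by Zariski purity) one obtains a finite Galois cover $\mu\colon \tilde{E}\to E$ with Galois group $G$, ramified precisely along $\Supp(\Delta)$ with ramification index exactly $n_i$ over $P_i$.

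The ramification indices are tailored to the standard coefficients of $\Delta$: by the Riemann--Hurwitz formula one obtains
\[
K_{\tilde E} \;=\; \mu^{*}(K_E+\Delta).
\]
Because $(E,\Delta)$ is klt and $\mu$ is finite, $\tilde E$ is klt; because $-(K_E+\Delta)$ is ample and $\mu$ is finite, $-K_{\tilde E}$ is ample. Hence $\tilde E$ is an $n$-dimensional klt Fano variety, and therefore rationally connected by the standard results on rational connectedness of klt Fanos. The group $G$ acts on $\tilde E$ by biregular automorphisms through its Galois action, so in particular $G\leqslant {\rm Bir}(\tilde E)$. Applying Proposition~\ref{prop:Jordan-Bir} to $\tilde E$ and the finite subgroup $G\leqslant {\rm Bir}(\tilde E)$ yields a normal abelian subgroup $A\trianglelefteq G$ of rank at most $n$ and index at most $d(n)$. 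Setting $e(n):=d(n)$ gives the desired constant.

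The main technical point to watch is the construction and extension of the orbifold cover: one must check that the Galois cover determined by $G$ on the open locus $E^{0}\setminus \Supp(\Delta^{0})$ truly extends to a normal cover $\tilde E\to E$ whose ramification profile is exactly $(P_i,n_i)$, with no additional ramification contributed either along $\Supp(\Delta)\cap (E\setminus E^0)$ or over the (big) locus $E\setminus E^0$. Once this is secured, the equality $K_{\tilde E}=\mu^{*}(K_E+\Delta)$ is a clean Riemann--Hurwitz computation, and the rest is an application of the results quoted above; the bigness of $E^{0}$ and the standardness of the coefficients of $\Delta$ are precisely what make these two verifications go through.
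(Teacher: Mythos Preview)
Your strategy coincides with the paper's: use Braun's theorem for finiteness of $G=\pi_1(E^0,\Delta^0)$, build a finite Galois cover $\tilde E\to E$ with group $G$, show $\tilde E$ is rationally connected via the log Fano property, and apply Proposition~\ref{prop:Jordan-Bir}. The paper cites \cite{TX17}*{Proposition~3.6} (after passing to a small $\qq$-factorialization) to produce the compactified cover, while you construct it directly by normalizing $E$ in the \'etale cover of $E^0\setminus\Supp(\Delta^0)$; these yield the same object.

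One point needs correction: the ramification index of $\mu$ over $P_i$ is not necessarily \emph{exactly} $n_i$, only a \emph{divisor} of $n_i$, since all you know is that $\gamma_i^{n_i}$ lies in the kernel of $\pi_1(E^0\setminus\Supp(\Delta^0))\to G$. (Example: $E=\mathbb{P}^1$, $\Delta=\tfrac12\{0\}$; then $E^0\setminus\{0\}\cong\mathbb{C}$ is simply connected, $G$ is trivial, and there is no ramification at all.) Riemann--Hurwitz therefore gives $\mu^*(K_E+\Delta)=K_{\tilde E}+\Delta_{\tilde E}$ with an effective residual boundary $\Delta_{\tilde E}\geq 0$, not $K_{\tilde E}=\mu^*(K_E+\Delta)$. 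This does not damage the argument: $(\tilde E,\Delta_{\tilde E})$ is still klt with ample anti-log-canonical class, so $\tilde E$ remains rationally connected and the rest goes through verbatim. The paper's proof writes $\pi^*(K_E+\Delta)=K_Z+\Delta_Z$ with $\Delta_Z\geq 0$ from the start, sidestepping this issue.
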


\begin{proof}
Replacing $(E,\Delta)$ with a small $\qq$-factorialization $\phi \colon E^{\mathbb Q} \rar E$ does not change $(E^0,\Delta^0)$.
Also, since the morphism $\phi$ is birational, we may find an effective divisor $\Delta'$ on $E^{\mathbb Q}$ so that $-\Delta'$ is $\phi$-ample.
Thus, for $0<\epsilon \ll 1$, the pair $(E^{\mathbb Q},\Delta^{\mathbb Q}+\epsilon\Delta')$ is log Fano, where we have $\K E^{\mathbb Q}. + \Delta^{\mathbb Q}=\phi^*(\K E. + \Delta)$.
In particular, the hypotheses of~\cite{TX17}*{Proposition~3.6} are satisfied.
In the following, for the sake of brevity, we will assume that $E=E^{\mathbb Q}$ and $\Delta'=0$.
This assumption does not change the nature of the proof and will allow us to apply~\cite{TX17}*{Proposition~3.6} directly, without introducing extra notation.

By~\cite{Bra20}*{Theorem 2}, we know that $\pi_1(E^0,\Delta^0)$ is finite.
We will denote $G\coloneqq\pi_1(E^0,\Delta^0)$.
Let $(Z^0,\Delta_{Z^0})$ be the orbifold universal cover of $(E^0,\Delta^0)$.
By~\cite{TX17}*{Proposition~3.6}, we know that there exists a
normal analytic space $Z$, 
with a properly discontinuous action of $G$, 
so that $Z^0\subset Z$ is a big open subset, 
and the quotient of the log pair 
$(Z,\Delta_{Z})$ by $G$ equals $(E,\Delta)$.

Since $E$ is projective and $\pi \colon Z\rightarrow E$ is finite,
we conclude that $Z$ is itself a projective variety.
Moreover, we may write
\[
\pi^*(K_E+\Delta)=K_Z+\Delta_Z.
\]
By the Riemann--Hurwitz formula and the properties of universal covers, we have that $\Delta_Z \geq 0$.
Hence, we conclude that $Z$ has klt sigularities
and $-(K_Z+\Delta_Z)$ is an ample divisor.
Thus, $Z$ is a projective variety, and $(Z,\Delta_Z)$ is log Fano.
In particular, $Z$ is rationally connected.

We claim that for every element $g\in G$, 
$g$ induces a birational automorphism of $Z$.
Indeed, let $Z^1$ be $Z^0\setminus \pi^{-1}(D)$.
Then, $G$ is the group of deck transformations of the \'etale cover 
$Z^1 \to E^\circ \setminus D$.
In particular, any element of $G$ acts as an isomorphism of $Z^1$.
In particular, we have the following inclusions
\[
G\leqslant {\rm Aut}(Z^1)  \leqslant {\rm Bir}(Z).
\]
Now, we are in the setting of Proposition~\ref{prop:Jordan-Bir}.
Indeed, we have that $\pi_1(E^0,\Delta^0)=G$ 
and $G\leqslant {\rm Bir}(Z)$ is a finite subgroup,
where $Z$ is a $n$-dimensional rationally connected variety.

Indeed, applying Proposition~\ref{prop:Jordan-Bir}, we know that there is an exact sequence
\[
1\rightarrow A \rightarrow \pi_1(E^0,\Delta^0) \rightarrow N \rightarrow 1, 
\]
where $A$ is a finite abelian group of rank at most $n$
and index at most $d(n)$, where $d(n)$ is as in the statement of Proposition~\ref{prop:Jordan-Bir}.
Thus, we can choose $e(n)=d(n)$.
\end{proof}

We finish this section, by generalizing the above results to weakly log Fano type varieties.
This result is obtained by running a minimal model program and reducing the statement to the above proposition.

\begin{theorem}\label{thm:finiteness-weakly-LF}
Let $(E,\Delta)$ be a pair of weakly log Fano type.
Let $E^0$ be a big open subset on which $(E^0,\Delta^0)$ is log smooth,
where $\Delta^0$ is the restriction of $\Delta$ to $E^0$.
Then $\pi_1(E^0,\Delta^0)$ is finite.
Furthermore, the class of fundamental groups of
the log smooth locus of weakly log Fano type pairs of dimension $n$ satisfy the Jordan property with rank $n$.
\end{theorem}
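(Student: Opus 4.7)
The strategy is to reduce Theorem~\ref{thm:finiteness-weakly-LF} to the log Fano case established in Proposition~\ref{prop:Jordan-global-fundamental} by means of a minimal model program argument on varieties of Fano type. By assumption, there is a boundary $\Delta'\geq \Delta$ such that $(E,\Delta')$ is weakly log Fano. I would first replace $(E,\Delta')$ by a small $\mathbb{Q}$-factorialization $\sigma\colon \widetilde E\to E$, which exists by~\cite{BCHM}; since $\sigma$ is an isomorphism in codimension one, after replacing $E^0$ by $\sigma^{-1}(E^0)$ intersected with the log smooth locus of the transformed pair, one obtains an identification $\pi_1(E^0,\Delta^0)=\pi_1(\widetilde E^0,\sigma_*^{-1}\Delta|_{\widetilde E^0})$. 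Hence we may and shall assume $E$ is $\mathbb{Q}$-factorial, so that $E$ is a Mori dream space by~\cite{BCHM}*{Corollary~1.4.3}.

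Since $-(K_E+\Delta')$ is nef and big, the basepoint-free theorem applied to $-m(K_E+\Delta')$ for $m$ sufficiently divisible produces a birational morphism $\phi\colon E\to E''$ onto its ample model. Setting $\Delta''\coloneqq \phi_*\Delta'$ yields $\phi^*(K_{E''}+\Delta'')=K_E+\Delta'$, so $\phi$ is crepant, klt-ness is preserved, and $-(K_{E''}+\Delta'')$ is ample. In particular $(E'',\Delta'')$ is log Fano, and Proposition~\ref{prop:Jordan-global-fundamental} applied to $(E'',(\phi_*\Delta)_s)$ shows that the orbifold fundamental group of any big log smooth open of this pair is finite and satisfies the Jordan property with rank $n$.

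The remaining task, and the main obstacle, is to compare $\pi_1(E^0,\Delta^0)$ with the group just produced. If $\phi$ is small, this comparison is immediate: after shrinking $E^0$ to the isomorphism locus of $\phi$ intersected with the log smooth locus of $(E'',\phi_*\Delta)$, the two orbifold fundamental groups are canonically isomorphic. When $\phi$ contracts prime divisors $F_1,\dots,F_k$, each $F_i$ necessarily meets $E^0$ along a big open of $F_i$ since $\codim_E F_i=1$; nevertheless, $\phi(F_i)$ has codimension at least two in $E''$, so purity shows that the meridional loop $\gamma_{F_i}$ of Remark~\ref{rem:loop} dies in $\pi_1((E'')^0,(\phi_*\Delta)_s^0)$. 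Hence $\phi$ induces a surjection $\pi_1(E^0,\Delta^0)\twoheadrightarrow \pi_1((E'')^0,(\phi_*\Delta)_s^0)$ whose kernel is the normal closure of the loops $\gamma_{F_i}$. Since each $\gamma_{F_i}$ has finite order in the source --- it is either trivial (when $F_i\not\subset\Supp\Delta_s$, as then $F_i\subset E^0$ and the meridian bounds a disk) or of finite order dictated by the standard coefficient of $F_i$ in $\Delta_s$ --- a careful use of the orbifold-cover formalism in the spirit of~\cite{TX17}*{\S3} yields that this kernel is finite, and thus $\pi_1(E^0,\Delta^0)$ is finite. Finally, since the Jordan property with rank $n$ is preserved under extensions by finite groups, with uniform control on the Jordan constant depending only on $n$, the Jordan property with rank $n$ is inherited by $\pi_1(E^0,\Delta^0)$. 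The kernel analysis in the divisorial contraction case is the subtlest part of the argument, and is where the orbifold techniques from~\cites{Xu14,TX17} must be invoked with care.
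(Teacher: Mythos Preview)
Your approach has the surjection going in the wrong direction, and the attempt to repair this is where the argument breaks. For a birational contraction $\phi\colon E\to E''$ there is no natural map $\pi_1(E^0,\Delta^0)\to\pi_1((E'')^0,(\phi_*\Delta)_s^0)$: the restriction $\phi|_{E^0}$ need not land in $(E'')^0$, since the images $\phi(F_i)$ of the contracted divisors typically lie in the singular locus of $(E'',\phi_*\Delta)$. What does exist is a surjection in the \emph{opposite} direction. Identifying $(E'')^0\setminus\bigcup_i\phi(F_i)$ (which has the same orbifold fundamental group as $(E'')^0$, as each $\phi(F_i)$ has codimension $\ge 2$) with an open subset of $E^0\setminus\bigcup_i F_i$ via $\phi^{-1}$, and then including into $E^0$, yields a surjection
\[
\pi_1\bigl((E'')^0,(\phi_*\Delta)_s^0\bigr)\twoheadrightarrow\pi_1(E^0,\Delta^0).
\]
This is precisely the direction one wants: finiteness and the Jordan property with rank $n$ for the source pass immediately to the quotient, with no kernel analysis whatsoever. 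The paper's proof follows your overall outline (small $\qq$-factorialization, then an MMP on the Mori dream space to reach a model where $-(K+\Delta_s)$ is nef and big, then invoke \cite{Bra20} and Proposition~\ref{prop:Jordan-global-fundamental}), but it makes exactly this observation: each flip or divisorial contraction $E_i\dashrightarrow E_{i+1}$ produces a surjection $\pi_1(E_{i+1}^0,\Delta_{E_{i+1},s}^0)\twoheadrightarrow\pi_1(E_i^0,\Delta_{E_i,s}^0)$, so that everything flows back from the terminal model to $(E,\Delta)$.

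Your kernel argument is also flawed on its own terms. Even if a surjection $\pi_1(E^0,\Delta^0)\twoheadrightarrow\pi_1((E'')^0,\ldots)$ existed with kernel normally generated by elements $\gamma_{F_i}$ of finite order, this does not force the kernel to be finite: for instance, $\mathbb{Z}/2*\mathbb{Z}/2$ surjects onto $\mathbb{Z}/2$ with infinite kernel despite being generated by torsion elements. Nothing in \cite{TX17}*{\S3} circumvents this. Similarly, the assertion that ``the Jordan property with rank $n$ is preserved under extensions by finite groups, with uniform control on the Jordan constant depending only on $n$'' is not a general fact without a uniform bound on the order of the kernel, which you have not established. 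Finally, note that $(E'',(\phi_*\Delta)_s)$ is in general only of weakly log Fano type, not log Fano, so Proposition~\ref{prop:Jordan-global-fundamental} does not apply to it directly; the paper handles this by running the MMP for $-(K_E+\Delta_s)$ itself, so that on the output the pair with standard coefficients is already the one with nef and big anticanonical class.
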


\begin{proof}
As in the proof of Proposition~\ref{prop:Jordan-global-fundamental},
we may pass to a small $\qq$-factorialization of $E$.
This does not change the fundamental group of the log smooth locus.
First, we prove that $\pi_1(E^0,\Delta^0)$ is finite.
By assumption, $(E,\Delta)$ is of weakly log Fano type.
Hence, there exists $\Delta'\geq \Delta$ with
$(E,\Delta')$ klt and $-(K_E+\Delta')$ nef and big.
Since $E$ is a Mori dream space, $-(K_E+\Delta')$ is a semi-ample divisor.
Therefore, we can find $B\sim_\qq -(K_E+\Delta')$ so that $(E,\Delta'+B)$ is klt.
Let $\Delta_s$ denote the standard approximation of $\Delta$, and let $\Delta_s^0$ be its restriction to $E^0$.
By definition, we have that
\[
\pi_1(E^0,\Delta^0) \coloneqq
\pi_1(E^0,\Delta_s^0).
\]
Since $\Delta_s \leq \Delta$ and $E$ is $\qq$-factorial, it follows that $-(K_{E}+\Delta_s)$ is a big $\qq$-Cartier divisor.
Thus, we may run a minimal model program for the divisor $-(K_E+\Delta_s)$.
It consists in a sequence of flips and divisorial contractions
\[
E \eqqcolon E_0 \dashrightarrow E_1 \dashrightarrow E_2 \dashrightarrow \dots \dashrightarrow E_k.
\]
It terminates since $E$ is a Mori dream space.
We denote by $\Delta_{E_k}$ the push-forward of $\Delta$ to $E_k$.
By construction, we have that
$-(K_{E_k}+\Delta_{E_k,s})$ is nef and big.
We claim that $(E_k,\Delta_{E_k,s})$ is klt.
Indeed, we have that 
$\Delta_{E_k,s}\leq \Delta'_k+B_k$,
where $\Delta'_k$ (resp. $B_k)$ is the strict transform of $\Delta'$ (resp. $B$) on $E_k$.
Furthermore, all the steps $E_i\dashrightarrow E_{i+1}$ are log crepant for the log pair
$(E,\Delta'+B)$.
Since $(E,\Delta'+B)$ is klt, then so is $(E_k,\Delta'_k+B_k)$.
Thus, as we have $\Delta_{E_k,s} \leq \Delta'_k+B_k$, 
$(E_k,\Delta_{E_k,s})$ is klt as well.
By~\cite{Bra20}*{Theorem 1}, we conclude that
$\pi_1(E^0_k,\Delta^0_{E_k,s})$ is finite.
As usual, the super-script zero means that we are working over a big open subset on which the pair is log smooth.

We claim that, we have a sequence of surjections
\[
\pi_1(E^0_k,\Delta^0_{E_k,s})\rightarrow
\pi_1(E_{k-1}^0,\Delta^0_{E_{k-1},s}) \rightarrow
\dots \rightarrow
\pi_1(E^0,\Delta_S^0) \eqqcolon \pi_1(E^0,\Delta^0).
\]
Indeed, it suffices to prove the claim for a flip or a divisorial contraction.
First, assume that $E_i\dashrightarrow E_{i+1}$ is a flip.
Given that this is a surgery in codimension at least two, it follows that
\[
\pi_1(E^0_{i+1},\Delta_{E_{i+1},s}^0) \simeq 
\pi_1(E^0_{i},\Delta_{E_i,s}^0).
\]
Indeed, cutting out a subset of codimension at least two from a log smooth orbifold does not change its orbifold fundamental group.
Now, assume that
$\pi\colon E_i\rightarrow E_{i+1}$ is a divisorial contraction of a prime divisor $P$.
If the image of $P$ lies in the singular locus of $(E_{i+1},\Delta_{E_{i+1},s})$, then we have that
\[
\pi_1(E_{i+1}^0,\Delta^0_{E_{i+1},s})\simeq
\pi_1(E_i^0\setminus \Supp(P),\Delta^0_{E_{i},s})) \rightarrow 
\pi_1(E^0_i,\Delta^0_{E_{i},s}),
\]
and the second homomorphism is surjective.
If the image of $P$ does not lie in the singular locus
of $(E_{i+1},\Delta_{E_{i+1},s})$,
we still have that $\pi(P)$ has codimension at most two in $E_{i+1}$, hence 
\[
\pi_1(E_{i+1}^0,\Delta^0_{E_{i+1},s}) 
\simeq 
\pi_1(E_{i+1}^0\setminus \pi(P),\Delta^0_{E_{i+1},s})
\simeq 
\pi_1(E_i^0\setminus \Supp(P),\Delta^0_{E_{i},s})) \rightarrow 
\pi_1(E^0_i,\Delta^0_{E_{i},s}),
\]
and the last homomorphism is surjective.
We conclude that there is a natural surjection
\[
\pi_1(E^0_k,\Delta^0_{E_k,s})\rightarrow
\pi_1(E^0,\Delta^0).
\]
Thus, $\pi_1(E^0,\Delta^0)$ is finite.
Note that the Jordan property of rank $n$ behaves well under surjections.
Then, the second statement follows from the Proposition~\ref{prop:Jordan-global-fundamental} applied to the group
$\pi_1(E^0_k,\Delta^0_{E_k,s})$.
\end{proof}

\section{Whitney stratifications and plt blow-ups}
\label{sec:whitney}

In this section, we will use the theory of Whitney stratifications
to prove that, to a plt blow-up of a $n$-dimensional klt singularity $x \in X$, we can associate a loop that lies in the center subgroup $Z(\pi^{\rm reg}_1(X,x))$ of the regional fundamental group $\pi^{\rm reg}_1(X,x)$. 
We refer to Remark~\ref{rem:loop-plt} for a systematic definition of the loop associated to a plt blow-up.

We start with some basic properties of stratification theory.
The proofs can be found in~\cites{Gor81,GM83,GM88}.
The following is the definition of Whitney stratification.

\begin{definition}
{\em Let $X$ be an analytic space that can be embedded into a smooth manifold $M$.
A {\em Whitney stratification} for the space $X$ is a chain of inclusions
\[
X_0\subset X_1 \subset X_2\subset \dots \subset X_m=X,
\]
so that $X_i$ is open in $X$ and $W_i\coloneqq X_i\setminus X_{i-1}$ is a smooth complex manifold 
which is closed in $X_i$.
Moreover, for each $i$, we have a tubular neighborhood $U_i$ of $W_i$ in $M$,
with a {\em tubular distance function} $\rho_i\colon U_i\rightarrow \rr$, 
and a {\em tubular projection} $\pi_i \colon U_i \rightarrow W_i$.
This tubular projection is a topological retraction.
The complex manifolds $W_i$ are the {\em strata} of the Whitney stratification.

We define $U_i(\epsilon)$ to be all the points of $U_i$ whose distance function 
is strictly less than $\epsilon$, 
and by $S_i(\epsilon)$ to be all the points of $U_i$ whose distance function
equals $\epsilon$.
Restricting $\pi_i$ to $S_i(\epsilon)$, we have a natural sphere-bundle structure on $W_i$.
}
\end{definition}

\begin{lemma}\label{lem:Whitney's-condition-B}
Let $X$ be an analytic space that can be embedded into a smooth manifold.
There exists a Whitney stratification satisfying the following conditions. 
\begin{enumerate}
\item The map $(\pi_i, \rho_i)\colon U_i(\epsilon) \rightarrow W_i\times [0,\epsilon)$ has surjective
differential when restricted to any stratum $W_j \subset \overline{W_i}$.
\item For each $W_j\subset  \overline{W_i}$, the relations
$\pi_j \circ \pi_i = \pi_j$, and $\rho_i \circ \pi_j = \rho_i$
hold whenever both sides of the equality are well-defined.
\item For all $\epsilon$ sufficiently small, the sphere bundles $S_i(\epsilon)$ are multi-transverse,
i.e., for any collection of strata $W_{a_1},\dots, W_{a_k}$ and 
any disjoint collection of strata $W_{b_1},\dots, W_{b_j}$, the intersection
$S_{a_1}(\epsilon)\cap \dots \cap S_{a_k}(\epsilon)$ is transverse to the intersection
$S_{b_1}(\epsilon)\cap \dots \cap S_{b_j}(\epsilon)$,
and is also transverse to any other stratum.
\end{enumerate}
\end{lemma}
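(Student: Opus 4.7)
The statement is a synthesis of several classical results in stratification theory, and the plan is to deduce it from the existing literature by piecing together three separate inputs in order. First, I would invoke Whitney's theorem on the existence of a stratification of a (complex) analytic subspace $X\subset M$ satisfying Whitney's conditions (a) and (b); for an analytic space this can be found in the foundational work of Whitney and in the exposition of \cite{GM88}. By refining such a stratification, one may furthermore assume that all strata are connected complex manifolds and that the frontier condition $\overline{W_i}\setminus W_i=\bigcup_{W_j\subset\overline{W_i},\ j\neq i}W_j$ holds, so that the filtration $X_0\subset X_1\subset\dots\subset X_m=X$ by skeleta makes sense.

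Next, to produce the control data $(U_i,\pi_i,\rho_i)$ satisfying condition (2), I would follow the Thom--Mather construction of a \emph{system of control data} compatible with the stratification, as developed in \cite{GM83} and summarized in \cite{GM88}. The idea is to build $(U_i,\pi_i,\rho_i)$ inductively on $\dim W_i$ starting from the lowest-dimensional stratum: having chosen tubular neighborhoods, projections and distance functions for all strata $W_j$ with $\dim W_j<\dim W_i$, one uses a partition of unity on a neighborhood of $W_i$ in $M$ to produce $\pi_i$ and $\rho_i$ that agree with the already-constructed $(\pi_j,\rho_j)$ on the overlap, thereby enforcing $\pi_j\circ\pi_i=\pi_j$ and $\rho_i\circ\pi_j=\rho_i$ wherever defined. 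The surjectivity-of-differential statement (1) is then a direct consequence of Whitney's condition (b) combined with the smoothness of $\pi_i$ and $\rho_i$; concretely, the tubular projection $\pi_i$ is a submersion on each $W_j\subset\overline{W_i}$, and $\rho_i$ restricted to any such stratum is a submersion to $[0,\epsilon)$ because the rays normal to $W_i$ meet the nearby strata transversally, again by condition (b).

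Finally, for the multi-transversality of the sphere bundles $S_i(\epsilon)$ in (3), I would argue that this is an open and dense condition on the choice of control data. After the control data $(U_i,\pi_i,\rho_i)$ are fixed, one may shrink each tubular neighborhood and perturb $\rho_i$ within the class of control functions so that, for all sufficiently small $\epsilon>0$, the level sets $S_i(\epsilon)$ intersect each other and every stratum transversally. The key input is a Sard--Bertini type statement: the set of $\epsilon>0$ for which all finitely many intersections of the $S_i(\epsilon)$ with themselves and with each stratum are transverse is open and of full measure near $0$, by the genericity of regular values for the joint maps $(\rho_{a_1},\dots,\rho_{a_k},\rho_{b_1},\dots,\rho_{b_j})$ restricted to the relevant strata. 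Thus any sufficiently small $\epsilon$ works simultaneously for all the finitely many relevant index tuples.

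The main technical obstacle is the inductive construction of the control data ensuring (2); the compatibilities $\pi_j\circ\pi_i=\pi_j$ and $\rho_i\circ\pi_j=\rho_i$ are extremely rigid and require a careful choice of Riemannian metric (or equivalent data) near each stratum, together with a gluing via partitions of unity that respects the previously constructed tubular structures on the lower strata. Once this is done, both (1) and (3) follow with relatively formal arguments, and the entire package is exactly the content of \cites{Gor81,GM83,GM88} which we quote.
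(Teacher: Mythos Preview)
Your proposal is a reasonable outline of the classical Thom--Mather construction, but you should be aware that the paper does not give a proof of this lemma at all: immediately after the statement it simply writes ``The existence of a stratification with such conditions is proved in~\cite{Gor81}.'' In other words, the lemma is quoted as a known result from the literature, and the paper's ``proof'' consists of that single citation. Your sketch is therefore considerably more detailed than what the paper provides, and is broadly faithful to how the construction goes in \cites{Gor81,GM83,GM88}; but for the purposes of matching the paper, a one-line reference to \cite{Gor81} is all that is expected.
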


% The conditions listed in the above lemma are known as Whitney's condition $B$.
The existence of a stratification with such conditions is proved in~\cite{Gor81}.
The objects that admit a stratification as above are called {\em Whitney objects}.
Given a Whitney object $X$, we will define an important deformation retraction 
$f\colon U(\epsilon)\rightarrow X$.
The following lemma is proved in~\cite{Gor81}*{\S7}.

\begin{lemma}\label{lem:stratum-preserving-homeomorphisms}
Let $X$ be a Whitney object.
Fix $\epsilon$ small enough
satisfying the condition $(3)$ of Lemma~\ref{lem:Whitney's-condition-B}.
We can find stratum preserving retractions $r_i \colon U_i(2\epsilon)-W_i \rightarrow S_i(2\epsilon)$
for each stratum $W_i$ of $X$, such that whenever $W_j\subset \overline{W_i}$, the following conditions hold:
$r_i|_{W_j}$ is smooth, 
$r_j \circ r_i = r_i \circ r_j$,
$\rho_j \circ r_i =\rho_j$,
$\rho_i \circ r_j=\rho_i$,
$\pi_i \circ r_i = \pi_i$, and 
$\pi_j \circ r_i = \pi_j$.
In particular, we can define stratum preserving homeomorphisms 
\[
h_i \colon U_i(2\epsilon)-W_i \rightarrow S_i(2\epsilon) \times (0,2\epsilon),
\]
defined by $h_i(p)=(r_i(p),\rho_i(p))$.
\end{lemma}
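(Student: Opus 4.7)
The plan is to construct the retractions $r_i$ by induction on the depth of the strata (lowest-dimensional first), using the tubular control data $(\pi_i,\rho_i)$ and the multi-transversality from Lemma~\ref{lem:Whitney's-condition-B}(3), and to check the six compatibility relations as we go. Fix $\epsilon>0$ so small that all of the spheres $S_i(2\epsilon)$ satisfy the transversality conclusions of Lemma~\ref{lem:Whitney's-condition-B}(3) and $U_i(2\epsilon)$ is a genuine tubular neighborhood of $W_i$ for every $i$.

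For a single stratum $W_i$, the natural model is to define $r_i$ as the ``radial push-out'': on each fiber of $\pi_i$ one moves along the ray of the tubular structure until $\rho_i$ equals $2\epsilon$. Concretely, I would choose a controlled stratified vector field $v_i$ on $U_i(2\epsilon)\setminus W_i$ whose integral curves are tangent to the fibers of $\pi_i$, preserve the strata, and satisfy $v_i(\rho_i)>0$; integrating until $\rho_i=2\epsilon$ defines $r_i$. The fact that such a $v_i$ exists is exactly the classical Thom--Mather first isotopy style construction, combined with Lemma~\ref{lem:Whitney's-condition-B}(1) which guarantees that $(\pi_i,\rho_i)$ is a submersion when restricted to each $W_j\subset\overline{W_i}$. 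From this construction, $\pi_i\circ r_i=\pi_i$ is immediate, and the smoothness of $r_i|_{W_j}$ follows from the smoothness of $v_i$ along strata.

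The main obstacle is the compatibility between different strata: the relations $r_j\circ r_i=r_i\circ r_j$, $\rho_j\circ r_i=\rho_j$, $\rho_i\circ r_j=\rho_i$ and $\pi_j\circ r_i=\pi_j$ for $W_j\subset\overline{W_i}$. These are not automatic from the pointwise definition above; they must be arranged during the inductive construction. I would proceed as follows: having already built $r_j$ for all strata $W_j$ with $\dim W_j<\dim W_i$, I modify the vector field $v_i$ so that it commutes with the previously constructed $r_j$'s and preserves the functions $\rho_j$ and $\pi_j$. This is where the control conditions of Lemma~\ref{lem:Whitney's-condition-B}(2), namely $\pi_j\circ\pi_i=\pi_j$ and $\rho_i\circ\pi_j=\rho_i$, are indispensable: they let one average $v_i$ over the fibers of $\pi_j$ without losing the push-out property, and they guarantee that $\rho_j$ is constant along the integral curves of the modified $v_i$. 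The multi-transversality of the spheres $S_k(\epsilon)$ from Lemma~\ref{lem:Whitney's-condition-B}(3) is what makes the simultaneous modification well-defined near the pairwise intersections $S_i\cap S_j$.

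The hardest step is the simultaneous verification of the commutation $r_j\circ r_i=r_i\circ r_j$; this is the step where the inductive hypothesis and the full strength of Whitney's condition (b) (implicit in the existence of the controlled tubular data) enter. Once all retractions and compatibilities are in place, the final assertion about $h_i$ is then formal: the map $h_i(p)=(r_i(p),\rho_i(p))$ is continuous with continuous inverse given by flowing backward along $v_i$ from $S_i(2\epsilon)$ down to the prescribed value of $\rho_i$, and it is stratum-preserving because $r_i$ is. A detailed execution of this inductive construction is carried out in~\cite{Gor81}*{\S7}, to which we appeal.
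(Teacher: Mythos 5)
Your proposal is consistent with the paper, which gives no argument of its own for this lemma and simply cites~\cite{Gor81}*{\S7}, exactly as you do after sketching the standard controlled--vector--field (Thom--Mather style) construction with the compatibilities arranged inductively using the control data of Lemma~\ref{lem:Whitney's-condition-B}. Since both you and the authors ultimately defer the detailed construction to Goresky, your write-up is an acceptable (indeed more informative) version of the same approach.
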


\begin{corollary}\label{cor:extension-h_i}
Each homeomorphism $h_i$ extends to a homeomorphism between $U_i(2\epsilon)$ and the mapping cylinder of $\pi_i |_{S_i(2\epsilon)}$.
\end{corollary}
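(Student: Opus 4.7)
The plan is to extend $h_i$ by the identity on $W_i$, using the fact that the mapping cylinder
\[
\mathrm{Cyl}(\pi_i|_{S_i(2\epsilon)}) = \bigl( S_i(2\epsilon) \times [0, 2\epsilon) \bigr) \sqcup W_i \big/ \sim,
\]
with equivalence relation $(s, 0) \sim \pi_i(s)$, naturally contains $W_i$ as the image of the collapsed zero level and $S_i(2\epsilon) \times (0, 2\epsilon)$ as an open dense subset. Define $\bar h_i \colon U_i(2\epsilon) \to \mathrm{Cyl}(\pi_i|_{S_i(2\epsilon)})$ by
\[
\bar h_i(p) = \begin{cases} \bigl[\, r_i(p), \rho_i(p) \,\bigr] & \text{if } p \in U_i(2\epsilon) \setminus W_i, \\ [\,p\,] & \text{if } p \in W_i. \end{cases}
\]
On the complement of $W_i$ this coincides with $h_i$, and on $W_i$ it is the identity, so $\bar h_i$ is a bijection by Lemma \ref{lem:stratum-preserving-homeomorphisms}.

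The verification that $\bar h_i$ is continuous at a point $p \in W_i$ is the heart of the argument. Take any sequence $p_n \in U_i(2\epsilon) \setminus W_i$ with $p_n \to p$. Then $\rho_i(p_n) \to \rho_i(p) = 0$, and the identity $\pi_i \circ r_i = \pi_i$ from Lemma \ref{lem:stratum-preserving-homeomorphisms} combined with the fact that $\pi_i$ is a retraction onto $W_i$ yields $\pi_i(r_i(p_n)) = \pi_i(p_n) \to p$. By the definition of the quotient topology on the mapping cylinder, the classes $[r_i(p_n), \rho_i(p_n)]$ then converge to $[p]$, proving continuity. Continuity on $U_i(2\epsilon) \setminus W_i$ is immediate from that of $h_i$. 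For the inverse, one defines $\bar g_i$ on $\mathrm{Cyl}(\pi_i|_{S_i(2\epsilon)})$ by $[s, t] \mapsto h_i^{-1}(s, t)$ for $t > 0$ and $[q] \mapsto q$ for $q \in W_i$; this is well defined on the quotient precisely because $h_i^{-1}(s_n, t_n)$ approaches $\pi_i(s) \in W_i$ as $t_n \to 0^+$ and $s_n \to s$, and its continuity at points of $W_i$ follows from the fact that $(\pi_i, \rho_i)$ trivialises the tubular neighborhood around $W_i$.

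The main obstacle is the continuity of $\bar h_i$ at $W_i$, which hinges on the compatibility $\pi_i \circ r_i = \pi_i$ established in Lemma \ref{lem:stratum-preserving-homeomorphisms}; without this relation, the coordinate $r_i(p_n) \in S_i(2\epsilon)$ could drift inside the sphere bundle as $p_n \to W_i$ in an uncontrolled way, breaking the collapse required by the quotient. Beyond this, the argument is essentially formal, since the mapping cylinder is the precise model for a sphere bundle together with its cone along the zero section, which is exactly the local picture of $U_i(2\epsilon)$ along $W_i$ provided by the Whitney tubular chart $(\pi_i, \rho_i)$.
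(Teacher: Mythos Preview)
The paper does not give a proof of this corollary; it is stated without argument, as an immediate consequence of Lemma~\ref{lem:stratum-preserving-homeomorphisms} and the constructions in~\cite{Gor81}. Your proof is the natural unpacking of that claim and is correct. One minor remark: the phrase ``$(\pi_i,\rho_i)$ trivialises the tubular neighborhood around $W_i$'' is not literally accurate, since the tubular neighborhood is a disk bundle that need not be trivial; but the property you actually use, namely that $\rho_i(p_n)\to 0$ and $\pi_i(p_n)\to q$ together force $p_n\to q$ in $U_i(2\epsilon)$, is indeed a basic feature of the Whitney tubular data $(\pi_i,\rho_i)$ and is what makes the inverse continuous at points of $W_i$.
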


An important deformation retract $f\colon U(\epsilon)\rightarrow X$ is defined in~\cite{Gor81}*{7.1}.
We will recall its construction by using the homeomorphisms $h_i$.
We define $U(\epsilon)=\bigcup U_i(\epsilon)$, where the union runs over all the strata of the Whitney object $X$.
Fix a smooth non-decreasing function $q\colon \rr\rightarrow\rr$, so that
$q(t)=0$ for $t \leq \epsilon$, and $q(t)=t$ for each $t\geq 2\epsilon$.
For each stratum $W_i$ of the Whitney object $X$, we may define the function
\[
H_i(p)\coloneqq
\left\{
	\begin{array}{ll}
		p & \mbox{if } p\not\in U_i(2\epsilon) \\
		h_i^{-1}(r_i(p),q(\rho_i(p))) & \mbox{if }p\in U_i(2\epsilon)
	\end{array}
\right.
\]
The function $H_i$ is continuous, homotopic to the identity, and if $p\in U_i(\epsilon)$, then $H_i(p)=\pi_i(p)$.
We define $\tilde{f}\colon U_i(2\epsilon)\rightarrow U(2\epsilon)$
to be the composition $H_{W_1}\circ H_{W_2} \circ \dots \circ H_{W_m}$,
where the composition runs over all the strata of $X$ in any order.
Then, $f$ is just the restriction of $\tilde{f}$ to $U(\epsilon)$.
It is proved in~\cite{Gor81}*{7.2} that $f$ is a deformation retract.

\begin{definition}{\em
Let $X$ be a Whitney object and $W$ a stratum.
Fix $\epsilon > 0$.
We define the {\em $\epsilon$-interior of $W$}, denoted by $W^{\rm int}_\epsilon$, to be
\[
W^{\rm int}_\epsilon = W - \bigcup_j \rho_j^{-1}[0,\epsilon),
\]
where the union is taken over all the strata $W_j$ satisfying $W_j \subset \overline{W}$.
The {\em interior} $W^{\rm int}$ of the stratum is just the $0$-interior.
To avoid confusion, we will often say that this is the interior with respect to the Whitney stratification.
Given a Whitney object $X$, we may write $X_0=W_0$,
where $W_0$ is the unique stratum which has codimension zero on $X$.
Hence, in this case we have that $X_0=X_0^{\rm int}$.
We may just write $X^{\rm int}_\epsilon$ to refer to $W^{\rm int}_\epsilon$, when the Whitney stratification 
is clear from the context.
Furthermore, if $\epsilon$  is zero, we may suppress it from the notation.
}
\end{definition}

We prove a lemma regarding the restriction of the retraction $f$ to the
interior of the largest stratum in the Whitney stratification.

\begin{lemma}\label{lem:trivial-bundle-2e-interior}
Let $X$ be a Whitney object immersed in $M$.
Let $X_0$ be the unique stratum of codimension zero. 
Assume that the tangent bundle of the interior of the stratum $X_0$ in $M$ is trivial.
Then, the restriction of the retraction 
$f\colon U(\epsilon)\rightarrow X$ to $X^{\rm int}_{0,2\epsilon}$
is a trivial bundle.
\end{lemma}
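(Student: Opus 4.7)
The plan is to identify the restriction of $f$ over $X^{\rm int}_{0,2\epsilon}$ with the tubular projection $\pi_0$ to the top stratum, and then invoke the triviality hypothesis on the tubular (equivalently, normal) bundle to conclude.

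First, set $V := \pi_0^{-1}(X^{\rm int}_{0,2\epsilon}) \cap U_0(\epsilon)$. I would prove that $V = f^{-1}(X^{\rm int}_{0,2\epsilon})$ and that $f|_V = \pi_0|_V$. The underlying geometric fact is that a point $p \in U_0(\epsilon)$ whose projection $\pi_0(p)$ lies in $X^{\rm int}_{0,2\epsilon}$ must itself stay outside every tube $U_i(2\epsilon)$ for $i > 0$: the tubular distance function $\rho_i$ on $M$ is compatible with $\pi_0$ (via the relations in Lemma~\ref{lem:Whitney's-condition-B}(2) and the multi-transversality of the spheres $S_i(\epsilon)$ in (3)), forcing $\rho_i(p)$ and $\rho_i(\pi_0(p))$ to differ by a controlled amount. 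Once this is granted, each $H_i$ with $i>0$ acts as the identity on $V$, while on $U_0(\epsilon)$ the composition simplifies to
\[
H_0(p) \;=\; h_0^{-1}(r_0(p),\, q(\rho_0(p))) \;=\; h_0^{-1}(r_0(p),\,0) \;=\; \pi_0(p),
\]
using that $q\equiv 0$ on $[0,\epsilon]$ and that the mapping-cylinder extension of $h_0$ furnished by Corollary~\ref{cor:extension-h_i} identifies the zero-level with the projection $\pi_0$. Hence $f|_V = \pi_0|_V$, and the identification of $V$ with $f^{-1}(X^{\rm int}_{0,2\epsilon})$ follows similarly.

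Next, the tubular neighborhood $U_0$ comes equipped via $\pi_0$ with the structure of a disk bundle over $X_0$ of real rank equal to the codimension of $X_0$ in $M$, isomorphic to the unit disk bundle of the normal bundle of $X_0$ in $M$. Under the hypothesis that this bundle is trivial over $X_0^{\rm int}$, we conclude that $\pi_0 \colon V \to X^{\rm int}_{0,2\epsilon}$ is a trivial disk bundle, whence the identification $f|_V = \pi_0|_V$ yields the lemma.

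The principal obstacle will be the bookkeeping in the first step. The retraction $f$ is defined as an iterated composition $H_{W_1} \circ \dots \circ H_{W_m}$, and one must verify, not merely at the starting point but throughout the iteration, that no intermediate $H_i$ with $i>0$ perturbs the argument into a smaller tube around a lower stratum. This requires a careful use of the commutation relations between the tubular projections $\pi_i$, the distance functions $\rho_i$, and the stratum-preserving retractions $r_i$ listed in Lemmas~\ref{lem:Whitney's-condition-B} and~\ref{lem:stratum-preserving-homeomorphisms}, combined with the multi-transversality of the level spheres $S_i(\epsilon)$.
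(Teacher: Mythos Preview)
Your approach is essentially the same as the paper's: reduce $f$ over $X^{\rm int}_{0,2\epsilon}$ to the tubular projection $\pi_0$ by arguing that the remaining $H_i$ act trivially there, and then invoke the triviality hypothesis on the tubular neighborhood. The paper simply asserts that ``all the other functions $H_i$ will be trivial on $X^{\rm int}_{2\epsilon}$'' and that $H_{X_0}=\pi_{X_0}$ on $U_{X_0}(\epsilon)$, whereas you spell out the bookkeeping and correctly flag it as the main technical point; one caution is that the relation $\rho_i\circ\pi_0=\rho_i$ you gesture at does not follow directly from Lemma~\ref{lem:Whitney's-condition-B}(2) as stated (the inclusion goes the wrong way), so you will need the stratum-preserving identities of Lemma~\ref{lem:stratum-preserving-homeomorphisms} rather than those of Lemma~\ref{lem:Whitney's-condition-B} to push this through.
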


\begin{proof}
Recall that we denote by $X^{\rm int}_{2\epsilon}$ the 
$2\epsilon$-interior of the codimension zero stratum $X_0$ of
the Whitney stratification of $X$.
Note that the restriction of $f$ to $X^{\rm int}_{2\epsilon}$ is just 
\[
H_0\coloneqq
H_{X_0}|_{X^{\rm int}_{2\epsilon}}.
\]
Indeed, all the other functions $H_i$ will be trivial on the analytic open set
$X^{\rm int}_{2\epsilon}$.
Hence, it suffices to prove that $H_{X_0}$ restricted to $X^{\rm int}_{2\epsilon}$ gives a trivial bundle.
By assumption, every point $p\in U(\epsilon)$ belongs to $U_{X_0}(\epsilon)$.
Hence, on $f^{-1}(X^{\rm int}_{2\epsilon})$ the continuous function $H_{X_0}$ is just $\pi_{X_0}$.
Moreover, on $X^{\rm int}_{2\epsilon}$ the tangent bundle is trivial,
hence $\pi_{X_0}$ is a trivial bundle.
\end{proof}

\begin{lemma}\label{lem:surjection-fundamental-groups-interior}
Let $X$ be a Whitney object.
Let $\gamma_1,\dots, \gamma_k$ be finitely many elements of  $\pi_1(X^{\rm int}_0)$
and
\[
\mathcal{R}_1(\gamma_1,\dots, \gamma_k),
\dots,
\mathcal{R}_j(\gamma_1,\dots,\gamma_k)
\]
be finitely many relations which hold on $\pi_1(X^{\rm int}_0)$.
Let $\phi \colon \pi_1(X^{\rm int}_\delta)\rightarrow \pi_1(X^{\rm int}_0)$
be the natural homomorphism induced by the inclusion $X^{\rm int}_\delta \subset X^{\rm int}_0$.
Then, for $\delta>0$ small enough, we may find 
$\gamma'_1,\dots, \gamma'_k \in \pi_1(X^{\rm int}_\delta)$
so that 
$\phi(\gamma'_i)=\gamma_i$ for each $i\in \{1,\dots, k\}$,
and the relations
\[
\mathcal{R}_1(\gamma'_1,\dots,\gamma'_k),\dots,
\mathcal{R}_j(\gamma'_1,\dots, \gamma'_k) 
\]
hold on $\pi_1(X^{\rm int}_\delta)$.
\end{lemma}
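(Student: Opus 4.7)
The plan is purely topological and relies on compactness, together with the fact that the family $\{X^{\rm int}_\delta\}_{\delta>0}$ exhausts $X^{\rm int}_0$ from the inside as $\delta \to 0$.

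First, I would unpack the hypothesis. For each $i \in \{1,\dots,k\}$, represent the class $\gamma_i \in \pi_1(X^{\rm int}_0)$ by a continuous loop $\sigma_i \colon S^1 \to X^{\rm int}_0$; its image $K_i \coloneqq \sigma_i(S^1)$ is compact. Likewise, each relation $\mathcal{R}_j(\gamma_1,\dots,\gamma_k) = 1$ in $\pi_1(X^{\rm int}_0)$ is witnessed by a continuous map of a two-disk $\tau_j \colon D^2 \to X^{\rm int}_0$ whose restriction to $\partial D^2$ spells out the word $\mathcal{R}_j$ in the loops $\sigma_1,\dots,\sigma_k$ (up to a fixed basepoint and reparametrizations); each image $L_j \coloneqq \tau_j(D^2)$ is compact. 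Set $K \coloneqq (\bigcup_{i=1}^k K_i) \cup (\bigcup_{j=1}^J L_j)$; this is a compact subset of $X^{\rm int}_0$.

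Next, I would verify that $K \subset X^{\rm int}_\delta$ for every sufficiently small $\delta>0$. Recall that
\[
X^{\rm int}_\delta = X_0 \setminus \bigcup_\ell \rho_\ell^{-1}[0,\delta),
\]
the union ranging over the finitely many strata $W_\ell \subsetneq \overline{X_0}$, and $\rho_\ell \colon U_\ell \to \rr_{\geq 0}$ is continuous with $\rho_\ell^{-1}(0) = W_\ell$. Since $K \subset X^{\rm int}_0$ is disjoint from each such $W_\ell$, the compact set $K \cap U_\ell$ is disjoint from $\rho_\ell^{-1}(0)$, and hence $\rho_\ell$ attains a positive minimum $\delta_\ell > 0$ on it (points of $K$ outside $U_\ell$ are automatically outside $\rho_\ell^{-1}[0,\delta)$). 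Choosing $0 < \delta < \min_\ell \delta_\ell$ —possible because there are only finitely many strata— one obtains $K \subset X^{\rm int}_\delta$.

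Finally, with this $\delta$, the loops $\sigma_i$ factor through $X^{\rm int}_\delta$, and the disks $\tau_j$ factor through $X^{\rm int}_\delta$. Define $\gamma'_i \in \pi_1(X^{\rm int}_\delta)$ to be the class of $\sigma_i$ viewed in $X^{\rm int}_\delta$ (with a common basepoint chosen in $K$). Since the inclusion $X^{\rm int}_\delta \hookrightarrow X^{\rm int}_0$ is the map inducing $\phi$, we immediately get $\phi(\gamma'_i) = \gamma_i$. The disk $\tau_j \colon D^2 \to X^{\rm int}_\delta$ witnesses the relation $\mathcal{R}_j(\gamma'_1,\dots,\gamma'_k) = 1$ in $\pi_1(X^{\rm int}_\delta)$. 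This concludes the proof. The only nontrivial point is the exhaustion claim in the second step, but it is immediate from continuity of the $\rho_\ell$ and compactness of $K$; I do not anticipate any real obstacle, as the statement essentially says that finite data in a direct limit of spaces already lives at some finite stage.
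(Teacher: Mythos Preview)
Your proof is correct and follows essentially the same approach as the paper: represent the loops and the null-homotopies witnessing the relations by maps with compact image in $X^{\rm int}_0$, then use compactness (the paper phrases this as disjointness of closed sets) to find $\delta>0$ with this compact set contained in $X^{\rm int}_\delta$. Your treatment of the second step, separating the contribution of each $\rho_\ell$ and handling points outside $U_\ell$, is in fact more carefully written than the paper's version.
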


\begin{proof}
Let $W_1,\dots, W_m$ be all the strata of $X$ that are not equal to $X_0$.
Then $X^{\rm int}_0 = X - \bigcup_{i=1}^m W_i$.
Let $\gamma$ be an element of $\pi_1(X^{\rm int}_0)$.
The element $\gamma$ can be represented by a loop in $X$ that is disjoint from $\bigcup_{i=1}^m W_i$.
Observe that both sets $\gamma$ and $\bigcup_{i=1}^m W_i$ are closed in $X$.
Hence, we may find $\delta$ small enough so that
$\gamma$ and $\bigcup_{i=1}^m \rho^{-1}_i[0,\delta)$ are disjoint.
In particular, $\gamma$ is contained in $\pi_1(X^{\rm int}_\delta)$.
Hence, the natural map $\pi_1(X^{\rm int}_\delta)\rightarrow \pi_1(X^{\rm int}_0)$ contains $\gamma$ in its image.
Then, it suffices to shrink $\delta$ enough, so the above argument works for a set of genenerators of a given $k$-tuple $\gamma_1,\dots, \gamma_k$.

On the other hand, any relation $\mathcal{R}(\gamma_1,\dots, \gamma_k)$
can be represented as a homotopy of loops in $X$,
so that the homotopy is disjoint from $\bigcup_{i=1}^m W_i$.
Since the image of the homotopy is closed in the analytic topology
of $X\setminus \bigcup_{i=1}^m W_i$, we may choose $\delta>0$ small enough so that
such homotopy can be realized in $X^{\rm int}_\delta$.
In particular, the relation $\mathcal{R}(\gamma'_1,\dots,\gamma'_k)$ still holds in $\pi_1(X^{\rm int}_\delta)$.
\end{proof}

\begin{corollary}
Let $X$ be a Whitney object such that $\pi_1(X_0^{\rm int})$ is finitely presented. Then for $\delta>0$ small enough, the natural homomorphism 
$\phi \colon \pi_1(X^{\rm int}_\delta)\rightarrow \pi_1(X^{\rm int}_0)$ is an isomorphism.
\end{corollary}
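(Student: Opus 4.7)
The plan is to apply Lemma~\ref{lem:surjection-fundamental-groups-interior} to a finite presentation of $\pi_1(X^{\rm int}_0)$. First, I would fix such a presentation
\[
\pi_1(X^{\rm int}_0)=\langle \gamma_1,\dots,\gamma_k \mid \mathcal{R}_1,\dots,\mathcal{R}_j\rangle.
\]
Applying the lemma to this finite collection of generators and relators produces, for all $\delta>0$ sufficiently small, lifts $\gamma'_i\in\pi_1(X^{\rm int}_\delta)$ with $\phi(\gamma'_i)=\gamma_i$ and with each relation $\mathcal{R}_s(\gamma'_1,\dots,\gamma'_k)$ holding in $\pi_1(X^{\rm int}_\delta)$. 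The assignment $\gamma_i\mapsto\gamma'_i$ thereby defines a homomorphism $\psi\colon\pi_1(X^{\rm int}_0)\to\pi_1(X^{\rm int}_\delta)$, and $\phi\circ\psi=\mathrm{id}$ by construction. This immediately gives surjectivity of $\phi$ (and injectivity of $\psi$).

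To conclude that $\phi$ is an isomorphism, it will remain to verify $\ker(\phi)=1$ for $\delta$ small enough. For any putative $\alpha\in\ker(\phi)$, a representative loop $\ell\subset X^{\rm int}_\delta$ admits a nullhomotopy $H\colon D^2\to X^{\rm int}_0$. Since $H(D^2)$ is compact and disjoint from every lower stratum $W_j$, each tubular distance $\rho_j$ attains a positive minimum on $H(D^2)$; hence there exists $\delta_H>0$ with $H(D^2)\subset X^{\rm int}_{\delta_H}$. Whenever $\delta\leq\delta_H$, we have $X^{\rm int}_{\delta_H}\subset X^{\rm int}_\delta$, so the nullhomotopy already takes place inside $X^{\rm int}_\delta$ and $\alpha$ must be trivial there.

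The hard part will be securing uniformity in $\delta$: a priori, $\delta_H$ depends on the particular element $\alpha$. The most natural remedy is to use the Whitney data of Lemma~\ref{lem:Whitney's-condition-B} to construct directly a deformation retract of $X^{\rm int}_0$ onto $X^{\rm int}_\delta$, obtained by iteratively pushing points radially out of the tubular neighborhoods of the lower strata along the projections $\pi_j$; the multi-transversality in condition $(3)$ of Lemma~\ref{lem:Whitney's-condition-B}, together with the compatibility relations in Lemma~\ref{lem:stratum-preserving-homeomorphisms}, should allow these local pushouts to be assembled coherently. Such a retract would make the inclusion $X^{\rm int}_\delta\hookrightarrow X^{\rm int}_0$ a homotopy equivalence and, in particular, would push every nullhomotopy in $X^{\rm int}_0$ simultaneously into $X^{\rm int}_\delta$, uniformly killing $\ker(\phi)$. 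Combined with the splitting $\psi$, this yields the desired isomorphism.
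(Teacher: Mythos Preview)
The paper gives no proof for this corollary; it is stated immediately after Lemma~\ref{lem:surjection-fundamental-groups-interior} as if it were a direct consequence. Your first paragraph is exactly the intended application: from a finite presentation you lift generators and relators to obtain a section $\psi$ with $\phi\circ\psi=\mathrm{id}$, hence $\phi$ is surjective. This much is correct and matches the spirit of the paper.

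You are also right that injectivity of $\phi$ does \emph{not} follow from Lemma~\ref{lem:surjection-fundamental-groups-interior} alone. The lemma lets you lift a prescribed finite list of loops and homotopies from $X_0^{\rm int}$ to $X^{\rm int}_\delta$, but it says nothing about elements of $\pi_1(X^{\rm int}_\delta)$ outside the image of $\psi$; and, as you note, the elementwise argument via compactness of a single null-homotopy gives no uniform $\delta$. So taken literally, the paper's implicit claim that the lemma yields an isomorphism has a gap.

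Your proposed remedy is the right one: the stratum-preserving homeomorphisms $h_j$ of Lemma~\ref{lem:stratum-preserving-homeomorphisms}, together with the compatibility relations $\rho_j\circ r_i=\rho_j$, $r_i\circ r_j=r_j\circ r_i$, allow you to push $X_0$ radially out of each $\rho_j^{-1}[0,\delta)$ in a coherent way, producing a deformation retraction of $X_0^{\rm int}$ onto $X^{\rm int}_\delta$ for small $\delta$. This is part of Goresky's machinery in \cite{Gor81}. Once you have that, the inclusion is a homotopy equivalence and $\phi$ is an isomorphism. Note that this route bypasses the lemma entirely and does not use the finite-presentation hypothesis; the presence of that hypothesis in the statement suggests the authors had only the section $\psi$ in mind.
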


Let $x\in (X,\Delta)$ be a klt singularity, and $\pi \colon Y \rightarrow X$ be a plt blow-up of $x\in (X,\Delta)$.
We denote by $E$ the exceptional divisor of $\pi$.
Applying the theory of Whitney stratifications to $E$, there exists a retraction $f\colon U(\epsilon)\rightarrow E$, see~\cite{TX17}*{\S3}.
Moreover, we may assume that $\mathcal{O}(E)$ is trivial on each stratum of $E$, see~\cite{TX17}*{\S3.1}.
The following is an enhancement of a proposition proved in~\cite{TX17}.

\begin{proposition}\label{prop:explicit-v_0}
Let $x \in (X,\Delta)$ be a klt singularity, and $\pi \colon Y \rightarrow X$ be a plt blow-up of $x\in X$ with exceptional divisor $E$.
Then, there exist
\begin{itemize}
\item a big open subset $E^0$ of $E$ on which $(E,\Delta_E)$ is log smooth; and
\item a standard boundary $B_E \leq \Delta_E$ so that
$(E,B_E)$ is a weakly log Fano type pair,
\end{itemize}
satisfying the following conditions.
Let $V^0=f^{-1}(E^0)\setminus E^0$, 
$\Delta_V^0$ the restriction of $\Delta_Y$ to $V^0$, 
and $(E^0,B^0)$ the restriction of $(E,B_E)$ to $E^0$.
Then, 
\begin{enumerate}
\item we have a differentiable fiber bundle morphism which is homotopic to $f$
\[
0
\rightarrow
\mathbb{D}^0
\rightarrow
(V^0,\Delta_{V,s}^0)
\rightarrow 
(E^0,B^0)
\rightarrow 
0,
\]
where $\mathbb{D}^0$ is a punctured disk,
and both $(V^0,\Delta_{V,s}^0)$ and $(E^0,B^0)$ are
regarded as orbifolds;
\item there is a short exact sequence of groups
\[
1 \rightarrow
\mathbb{Z}/m\mathbb{Z}
\rightarrow 
\pi_1(V^0,\Delta_{V,s}^0)
\rightarrow 
\pi_1(E^0,B^0)
\rightarrow 
1;
\]
\item 
the inclusion $V^0 \subset X$ induces a surjective homomorphism 
$\pi_1(V^0,\Delta_V^0)\rightarrow \pi_1^{\rm reg}(X,\Delta,x)$.
\end{enumerate}
\end{proposition}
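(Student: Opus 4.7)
The plan is to construct $E^0$ and $B_E$ explicitly, then to produce the fiber bundle structure via the Whitney stratification apparatus recalled above, and finally to derive both exact sequences from the long exact sequence of the fibration together with functoriality of the orbifold fundamental group.

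First, I would take $E^0\subset E$ to be the maximal open subset on which $(E,\Delta_E)$ is log smooth, where $\Delta_E$ is the different of $\Delta_Y$ on $E$ by adjunction. Since $(Y,\Delta_Y+E)$ is plt with $\lfloor\Delta_Y+E\rfloor=E$, the divisor $E$ is normal and $(E,\Delta_E)$ is klt, and log smoothness fails in codimension $\ge 2$ on $E$, so $E^0$ is a big open subset. I would then set $B_E:=(\Delta_E)_s$, the standard approximation; since $B_E\le\Delta_E$ and $-(K_E+\Delta_E)$ is ample (as $E$ is a Koll\'ar component), $(E,B_E)$ is weakly log Fano type with witness $\Delta_E$. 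For (1), I would apply the Whitney stratification of $E\subset Y$ with strata refined to respect $\mathrm{Supp}(\Delta_Y+E)$, obtaining the retraction $f\colon U(\epsilon)\to E$ of Lemma~\ref{lem:stratum-preserving-homeomorphisms}. Since $E^0$ is contained in the smooth top stratum of $E$ as an analytic variety, Lemma~\ref{lem:trivial-bundle-2e-interior} yields a punctured-disk bundle structure $V^0\to E^0$ on the $2\epsilon$-interior; this realizes the fiber bundle morphism in (1), and the orbifold data on both sides come from restrictions of the structure on $(Y,\Delta_Y+E)$ via adjunction, so they are compatible under the bundle projection.

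For (2), I would apply the long exact sequence of homotopy groups to the fibration $\mathbb{D}^0\to V^0\to E^0$:
\[
\pi_2(E^0)\xrightarrow{\partial}\pi_1(\mathbb{D}^0)=\mathbb{Z}\to\pi_1(V^0)\to\pi_1(E^0)\to 1.
\]
The image of $\partial$ is generated by the Euler invariant of the underlying $S^1$-bundle, which along $E^0$ records the Cartier index of $E$ in $Y$; letting $m$ be this generator, the kernel of $\pi_1(V^0)\to\pi_1(E^0)$ is $\mathbb{Z}/m\mathbb{Z}$. Passing to orbifold fundamental groups, the components of $\Delta_{V,s}^0$ meet each fiber in codimension $\ge 2$, so the fiber's contribution is unchanged; meanwhile the orbifold relations on the base combine those coming from restrictions of $\Delta_V^0$ and from the Seifert monodromy of the bundle around codimension-one singular loci of $Y$ in $E^0$, and together they are recorded by $B^0=(\Delta_E)_s|_{E^0}$. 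Quotienting out yields the desired SES. For (3), since $\pi$ is an isomorphism away from $E$, $V^0$ embeds into the smooth locus of a punctured analytic neighborhood $U^0$ of $x$, and for $\epsilon$ small enough $U^0\cap X^{\rm reg}$ deformation retracts onto $V^0$; the resulting surjection on ordinary fundamental groups descends to the orbifold level because the restrictions of $\Delta_Y$ and $\Delta$ agree under $Y\setminus E\cong X\setminus\{x\}$.

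The main obstacle will be the orbifold-level verification in (2): while the topological fibration yields a short exact sequence of ordinary fundamental groups with kernel a quotient of $\mathbb{Z}$, correctly identifying the integer $m$ in terms of the Cartier index of $E$ along $E^0$ and matching the orbifold relations on $V^0$ coming from $\Delta_{V,s}^0$ to those on $E^0$ coming from $B^0=(\Delta_E)_s|_{E^0}$ requires a careful Seifert-style analysis transverse to $E$ together with the compatibility of the different under adjunction.
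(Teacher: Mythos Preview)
Your choice $B_E=(\Delta_E)_s$ is the main gap. The coefficient of a prime $P\subset E^0$ in the different $\Delta_E$ is $1-\tfrac{1}{n_P}+\sum_Q\tfrac{\mult_P(Q)\,\coeff_Q(\Delta_Y)}{n_P}$, where $n_P$ is the Cartier index of $E$ along $P$; its standard approximation need not be $1-\tfrac{1}{m_Pn_P}$, where $m_P$ records the orbifold order imposed by $\Delta_{V,s}^0$ along the horizontal component through $P$. For example, with $n_P=2$ and a single $Q$ of coefficient $0.6$ through $P$, one gets $\coeff_P(\Delta_E)=0.8=1-\tfrac{1}{5}$, so $(\Delta_E)_s$ assigns $\tfrac{4}{5}$; but the orbifold $(V^0,\Delta_{V,s}^0)$ only carries an order-$2$ relation along $Q$ (since the standard approximation of $0.6$ is $\tfrac{1}{2}$), which combined with the Seifert multiplicity $n_P=2$ yields an order-$4$ relation on the base, not order $5$. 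With your $B_E$ the map in (1) is not an orbifold $\mathbb{D}^0$-bundle and the sequence in (2) is not exact. Your claim that ``the components of $\Delta_{V,s}^0$ meet each fiber in codimension $\geq 2$'' is also wrong: these components are vertical over divisors $P\subset E^0$, so generic fibers miss them entirely while special fibers lie inside them; this is precisely why all the orbifold data gets pushed to the base rather than to the fiber.

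The paper instead defines $B^0$ prime by prime as $\sum_P\bigl(1-\tfrac{1}{m_Pn_P}\bigr)P$, separately tracking the Cartier index $n_P$ of $E$ and the integer $m_P$ coming from the standard approximation of the coefficient of the component of $\Delta_Y$ through $P$, and then checks directly via adjunction that $D^0\leq B^0\leq \Delta_E^0$ (here $D^0$ is the different of $(K_Y+E)|_E$). A further step you are missing is the verification that at most one analytic branch of $\Delta_{Y,s}$ passes through any given $P$, and with multiplicity one; this uses the plt hypothesis together with non-negativity of the complexity, and is exactly what makes the local model an $m_P$-fold orbifold quotient of the ordinary Seifert bundle $V^0\to(E^0,D^0)$. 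Once $B^0$ is correct, your strategy for (2) via the long exact sequence and for (3) via the inclusion is in line with the paper (which defers to~\cite{TX17} and~\cite{Bra20} respectively), though note that $m$ arises from finiteness of $\pi_1(V^0,\Delta_{V,s}^0)$ rather than directly from a single Cartier index.
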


In the following remark, we give an interpretation of the exact sequence of Proposition~\ref{prop:explicit-v_0}(2).

\begin{remark}\label{rem:exact-seq}
{\em
Consider a klt singularity $x\in (X,\Delta)$ and a plt blow-up $\pi\colon Y\rightarrow X$.
We can regard $X$ as a small analytic neighborhood of $x$.
Let $X'\rightarrow X$ be the closure of the cover of $X\setminus {\rm Sing}(X)$ defined by $\pi_1^{\rm reg}(X,\Delta,x)$. Let $Y'$ be the normalization of the dominant component of the fiber product.
Then, $\pi_1^{\rm reg}(X,\Delta,x)$ acts on $Y'$ and its quotient by this action is $Y$.
Furthermore, $Y'\rightarrow X'$ is a plt blow-up and $\pi_1^{\rm reg}(X,\Delta,x)$ acts on the (irreducible) exceptional divisor $E'$.
The group $\zz/m\zz$ is the largest abelian subgroup of $\pi_1^{\rm reg}(X,\Delta,x)$ which acts trivially on $E'$.
}
\end{remark}

The following remark shows that we can associate a class in $\pi_1^{\rm reg}(X,\Delta,x)$ to each plt blow-up of the singularity. 

\begin{remark}\label{rem:loop-plt}
{\em 
By Proposition~\ref{prop:explicit-v_0}, given a klt singularity $x\in (X,\Delta)$ and a plt blow-up $\pi \colon Y\rightarrow X$ with exceptional divisor $E$, we can naturally define a class $\gamma_\pi \in \pi_1^{\rm reg}(X,\Delta,x)$ of finite order.
The class $\gamma_\pi$ is represented by any loop $\bar{\gamma}_\pi \subset (V^0,\Delta_V^0)$ whose class generates the fundamental group $\pi_1(\mathbb{D}^0)$.
Here, $\mathbb{D}^0$ denotes the fiber in the differential fiber bundle structure as in Proposition~\ref{prop:explicit-v_0} (1).
In particular, $[\bar{\gamma}_\pi] \in \pi_1(V^0)$ is a generator of the subgroup $\mathbb{Z}/m\mathbb{Z} \subset \pi_1(V^0,\Delta_V^0)$ given in Proposition~\ref{prop:explicit-v_0} (2).
As the homomorphism $\pi_1(V^0,\Delta_V^0) \to \pi_1^{\rm reg}(X,\Delta,x)$ is surjective, Proposition~\ref{prop:explicit-v_0} (3) immediately shows that $\gamma_\pi=[\bar{\gamma}_\pi] \in \pi_1^{\rm reg}(X,\Delta,x)$ is of finite order.
\\
We will refer to any loop $\gamma_\pi$ as {\em the loop around $E$}; in order to simplify the notation, in the remainder of the paper, we will denote by $\gamma_\pi$ not only the class constructed above inside $\pi_1^{\rm reg}(X,\Delta,x)$, but also any loop in said class.
}
\end{remark}

\begin{proof}
In the case that $\Delta=0$, 
the three statements follow from~\cite{TX17}*{\S 3}
and~\cite{Bra20}*{\S~11, \S~12}.
In the case that $\Delta$ is non-trivial, 
we explain how to define the standard boundary $B^0$ on $E^0$.
Let $P$ be a prime divisor on $E^0$.
We denote by $n_P$ the positive integer so that $P$ appears with coefficient $1-\frac{1}{n_P}$ in the
different of $(K_Y+E)|_E$.
We denote by $m_P$ the largest integer so that some prime
component of $\Delta^0_V$ with coefficient greater than or equal to $1-\frac{1}{m_P}$ contains $P$ in its support.
Note that, if no prime component of $\Delta^0_V$ contains $P$ in its support, then $m_P=1$.
Analogously, if $E$ is Cartier at the generic point of $P$, then we have that $n_P=1$.
We set $B^0 \coloneqq \sum_{P\text{ prime}}\left( 1-\frac{1}{m_Pn_P}\right)P$.
The above sum is finite, 
and we have that
\[
K_{E^0}+D^0 \leq K_{E^0}+B^0 \leq 
K_{E^0}+\Delta^0_E,
\]
by the adjunction formula for plt pairs (see, e.g.~\cite{Sho92}*{3.2}).
Here, $D_E$ is the different obtained by adjunction
$(K_Y+E)|_{E^0}=K_{E^0}+D^0$.
Similarly, we have $(K_Y+\pi^{-1}_*(\Delta)+E)|_{E^0}=K_{E^0}+\Delta^0_E$.
Indeed, note that 
\[
D^0 =\sum_{P\text{ prime}} \left(
1-\frac{1}{n_P}\right) P, \text{ and }
\]
\[
\Delta^0_E=\sum_{P \text { prime}} 
\left( 
1-\frac{1}{n_P} 
+ \sum \frac{{\rm
mult}_Q(\Delta_Y){\rm coeff}_Q(\Delta_Y)}{n_P} 
\right) 
\geq 
\sum_{P\text{ prime}} 
\left( 1-\frac{1}{n_P}+\frac{1-\frac{1}{m_P}}{n_P} \right)= B^0,
\]
where the sum runs over all prime divisors $Q$ that contain $P$ in their support.
Note that for each prime divisor $P$ on $E^0$ with $m_P>1$, there is a unique analytic prime component of $\Delta_{Y,s}\setminus E$ containing $P$ in its support.
Indeed, note that all components of $\Delta_{Y,s}$ have coefficient at least $\frac{1}{2}$.
Furthermore, since the complexity of a log canonical pair is non-negative (see, e.g.~\cite{Kol92}*{18.22}), we conclude that either $\Delta_{Y,s}$ has exactly two analytic components with coefficient $\frac{1}{2}$ through $P$, or a unique one.
In the former case, the singularity $(Y,\Delta_{Y,s})$ is log canonical but not plt at the generic point of $P$, leading to a contradiction.
Furthermore, such component must have multiplicity one at $P$.
Hence, the retraction $f$ induces a morphism of
orbifolds
$(V^0,\Delta_{V,s}^0)\rightarrow (E^0,B^0)$,
which is locally analytically an $m_P$-fold quotient
of the usual fiber bundle structure
$V^0\rightarrow (E^0,D^0)$ (see, e.g.,~\cite{TX17}*{\S 3}).
Thus, $(V^0,\Delta_{V,s}^0)\rightarrow (E^0,B^0)$
carries a punctured disk bundle struture as well.

Part (2) follows from the first part by taking 
the long exact sequence in homotopy groups (see~\cite{TX17}*{Remark 3.3}).
We only need to argue that $\pi_1(V^0,\Delta^0_{V,s})$ is finite.
This follows from the fact that $\pi_1(V^0)$ is finite
and the proof of~\cite{TX17}*{Lemma 3.5}.
Note that $(V^0,\Delta_{V,s}^0)$ can be realized as an analytic log smooth open subset of the regional log smooth locus of a $\qq$-factorial pair with standard coefficients.
Indeed, $(V^0,\Delta_{V,s}^0)$ is an open analytic subset of the log smooth locus of a klt singularity, and a small $\qq$-factorialization induces an isomorphism on it.

Part (3) follows from the proof given in~\cite{Bra20}*{\S 12}.
\end{proof}

Now, we are ready to prove the main theorem of this section.
We prove that the cycles of the local fundamental group of a $n$-dimensional klt singularity corresponding to plt blow-ups lie in the center subgroup.

\begin{theorem}\label{thm:plt-loop-commutes}
Let $x\in (X,\Delta)$ be a $n$-dimensional klt singularity and $\pi \colon Y\rightarrow X$
be a plt blow-up.
Consider $V^0\subset Y$ as in Proposition~\ref{prop:explicit-v_0}.
Then, $\gamma_\pi \in Z(\pi_1(V^0,\Delta^0_{V,s}))$.
\end{theorem}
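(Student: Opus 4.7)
The plan is to exploit the fiber bundle structure of Proposition~\ref{prop:explicit-v_0}. By item (2) there, the subgroup $\zz/m\zz \leqslant \pi_1(V^0,\Delta^0_{V,s})$ is precisely the image of $\pi_1(\mathbb{D}^0)$ under the fiber inclusion, and by Remark~\ref{rem:loop-plt} it is generated by $\gamma_\pi$. It therefore suffices to show that this image is central in $\pi_1(V^0,\Delta^0_{V,s})$. Using that this orbifold fundamental group is, by Definition~\ref{def:loc.fund.gr}, a quotient of $\pi_1(V^1)$ for $V^1\coloneqq V^0\setminus\Supp(\Delta^0_{V,s})$, and that centrality is preserved under surjective homomorphisms, it is in turn enough to prove that a representative of $\gamma_\pi$ is central in $\pi_1(V^1)$. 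Such a representative can be chosen inside $V^1$ because $\Supp(\Delta^0_{V,s})$ has real codimension at least $2$ in $V^0$.

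The heart of the argument is a standard fact about locally trivial fibrations $F \to V \to B$: for any $\alpha \in \pi_1(V, p)$ and any $\beta \in \pi_1(F, p)$, conjugation of the image of $\beta$ by $\alpha$ in $\pi_1(V, p)$ equals the image of $m(f_*(\alpha)) \cdot \beta$, where $m \colon \pi_1(B, q) \to \mathrm{Aut}(\pi_1(F, p))$ is the monodromy representation. In particular, if the monodromy acts trivially then the image of $\pi_1(F)$ lies in the center of $\pi_1(V)$. In our setting $f\vert_{V^1}\colon V^1 \to E^1 \coloneqq E^0 \setminus \Supp(B^0)$ is, up to homotopy via item (1) of Proposition~\ref{prop:explicit-v_0}, the punctured disk bundle obtained from the normal line bundle $N_{E/Y}\vert_{E^1}$ by deleting the zero section (using that $\O Y.(E)$ is trivial on each Whitney stratum of $E$, cf.~\cite{TX17}*{\S 3.1}). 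Its structure group is therefore the path-connected group $\cc^*$. Any element of $\cc^*$ acts on the fiber $\mathbb{D}^0 \simeq \cc^*$ by multiplication, hence is isotopic to the identity through bundle automorphisms and induces the identity on $\pi_1(\mathbb{D}^0) \simeq \zz$. The monodromy is thus trivial, $\gamma_\pi$ is central in $\pi_1(V^1)$, and passing to the quotient gives the theorem.

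The main technical point I expect to require care is making rigorous the identification of $f\vert_{V^1}$ with the punctured normal line bundle over $E^1$. Proposition~\ref{prop:explicit-v_0}(1) only guarantees a differentiable fiber bundle \emph{homotopic} to the Whitney retraction $f$, so one needs to verify that this homotopy equivalence carries the $\cc^*$-structure through so that the monodromy computation can be performed in the complex line bundle model. Beyond this compatibility, the proof relies only on two standard inputs: the monodromy interpretation of conjugation in a fibration, and the path-connectedness of $\cc^*$.
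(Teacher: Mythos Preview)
Your monodromy argument is correct and provides a genuinely different, more direct route than the paper's. The paper restricts the bundle to the Whitney-interior $E^{\rm int}_{2\epsilon}\subset E^0$ where it trivialises (Lemma~\ref{lem:trivial-bundle-2e-interior}), then invokes the finiteness of $\pi_1(E^0,B^0)$ (Theorem~\ref{thm:finiteness-weakly-LF}) together with Lemma~\ref{lem:surjection-fundamental-groups-interior} to ensure that $\pi_1(E^{\rm int}_{2\epsilon})\to\pi_1(E^0,B^0)$ is surjective, and finishes with a Five-Lemma diagram chase producing a surjection $\zz\times\pi_1(E^{\rm int}_{2\epsilon})\twoheadrightarrow\pi_1(V^0,\Delta^0_{V,s})$ in which $\gamma_\pi$ comes from the visibly central factor $\zz$. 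Your approach sidesteps both the stratification lemmas and the finiteness input: for any oriented $\mathbb D^0$-bundle, the monodromy acts on $\pi_1(\mathbb D^0)\simeq\zz$ through $\{+1\}\subset\{\pm1\}$, so the fibre class is automatically central. This is cleaner and shows, as a byproduct, that Theorem~\ref{thm:plt-loop-commutes} does not actually rely on Braun's finiteness theorem.

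Two minor corrections. First, the map $f|_{V^1}\colon V^1\to E^1$ is not well-defined as written: a prime $P\subset E^0$ with $n_P>1$ but $m_P=1$ lies in $\Supp(B^0)$ yet supports no component of $\Delta^0_{V,s}$, so $f^{-1}(P)\subset V^1$ while $P\not\subset E^1$. Replace $V^1$ by $V^2\coloneqq f^{-1}(E^1)$; over $E^1$ every $n_P=m_P=1$, so this is an honest oriented $\mathbb D^0$-bundle, and one still has surjections $\pi_1(V^2)\twoheadrightarrow\pi_1(V^1)$ (the complement has real codimension two) and $\pi_1(V^1)\twoheadrightarrow\pi_1(V^0,\Delta^0_{V,s})$ (definition of the orbifold fundamental group). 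Second, your closing concern is stronger than necessary: you need not identify $f|_{V^2}$ with the punctured normal bundle or transport a $\cc^*$-structure through the homotopy of Proposition~\ref{prop:explicit-v_0}(1). The only automorphisms of $\pi_1(\mathbb D^0)\simeq\zz$ are $\pm1$, and orientability of the bundle---immediate from the ambient complex structures on $V^0$ and $E^0$---already forces $+1$.
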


\begin{proof}
Recall that we may assume that the Whitney stratification of $E$
is so that $\mathcal{O}(E)$ is trivial on each stratum,
i.e., the line bundle $\mathcal{O}_Y(E)|_E$ trivializes on each stratum.
In particular, $\mathcal{O}(E)$ is the trivial line bundle when restricted
to $E^{\rm int}_0$, i.e., $\mathcal{O}(E)$ is trivial on the interior of $E$
with respect to this Whitney stratification.
Without loss of generality, we may assume that 
$E^{\rm int}_0 \cap \Delta=\emptyset$.
This may be achieved by adding to $\Delta$ prime components with coefficient zero.
Denote by $E^0$ a big open subset of $E$ on which $(E,\Delta)$ is log smooth.
Again, we may assume that $E^{\rm int}_0 \subset E^0$.
Let $B^0$ as in Proposition~\ref{prop:explicit-v_0}.
By Theorem~\ref{thm:finiteness-weakly-LF}, we know that $\pi_1(E^0,B^0)$ is finite.
We know that $\pi_1(E^{\rm int}_0) \rightarrow \pi_1(E^0,B^0)$ is a surjective homomorphism (e.g, see~\cite{CLS11 }*{Theorem 12.1.5}).
Hence, by Lemma~\ref{lem:surjection-fundamental-groups-interior}, we may find $\delta$ small enough so that
the homomorphism induced by the inclusion
\[
\pi_1(E^{\rm int}_\delta ) \rightarrow \pi_1(E^{\rm int}_0),
\] 
surjects onto a subgroup whose image in $\pi_1(E^0,B^0)$ generates the whole group.
In particular, the homomorphism
\[
\pi_1(E^{\rm int}_\delta) \rightarrow \pi_1(E^0,B^0)
\]
induced by the natural inclusion is surjective.
Observe that the above surjectivity is preserved if we shrink $\delta$.
On the other hand, we may choose $\epsilon$ small enough, 
so that $\delta > 2\epsilon$, and by Lemma~\ref{lem:surjection-fundamental-groups-interior}, 
the restriction of $f\colon U(\epsilon)\rightarrow E$ to $E^{\rm int}_{2\epsilon}$ is a trivial bundle.
The exact sequence
\begin{align*}
\xymatrix{
1 \ar[r] &
\mathbb{Z}/m\mathbb{Z} \ar[r] &
\pi_1(V^0,\Delta_{V,s}^0) \ar[r] &
\pi_1(E^0,B^0)\ar[r] & 
1,
}
\end{align*}
as in Proposition~\ref{prop:explicit-v_0}, is just the exact sequence induced by the long exact sequence of homotopy groups of the differentiable bundle $\mathbb{D}^0\rightarrow (V^0,\Delta_{V,s}^0) \rightarrow (E^0,B^0)$.
On the other hand, such differentiable bundle structure trivializes on $E^{\rm int}_{2\epsilon}$,
hence we get the following commutative diagram
\begin{equation}\label{diagram-lifting}
 \xymatrix{
1\ar[r]\ar[d] & 
\mathbb{Z} \ar[r]\ar[d] & 
\zz \times \pi_1(E^{\rm int}_{2\epsilon}) \ar[r]\ar[d] & 
\pi_1(E^{\rm int}_{2\epsilon})\ar[r]\ar[d]  & 
1 \ar[d] \\
1\ar[r] & 
\mathbb{Z}/m\mathbb{Z} \ar[r] &  
\pi_1(V^0,\Delta_{V,s}^0) \ar[r] & 
\pi_1(E^0,B^0) \ar[r] & 
1 
}
\end{equation}
The morphism $\zz \rightarrow \mathbb{Z}/m\mathbb{Z}$ in~\eqref{diagram-lifting} is surjective: in fact, both groups are generated by a non-trivial loop on the general
fiber of the differentiable bundle.
Hence, by the Five Lemma, we conclude that there is a surjection $\zz \times \pi_1(E^{\rm int}_{2\epsilon}) \rightarrow \pi_1(V^0,\Delta_{V,s}^0)$.
We denote by $\gamma'_\pi$ a generator of $\mathbb{Z}$ in~\eqref{diagram-lifting}.
We will use the same notation also for the image of $\gamma'_
\pi$ in $\zz \times \pi_1(E^{\rm int}_{2\epsilon})$; hence, $\gamma'_
\pi \in \zz \times \pi_1(E^{\rm int}_{2\epsilon})$ generates $\zz\times \{ e\}$, where $e$ denotes the identity of $\pi_1(E^{\rm int}_{2\epsilon})$.
Thus, $\gamma'_\pi \in Z(\zz \times \pi_1(E^{\rm int}_{2\epsilon}))$ as $\zz\times \{ e\} \subset Z(\zz \times \pi_1(E^{\rm int}_{2\epsilon}))$.
Thus, as $\gamma_\pi \in \pi(V_0,\Delta_{V,s}^0)$ is the image of $\gamma'_\pi$ via the surjective morphism in~\eqref{diagram-lifting}, we conclude that $\langle \gamma_\pi \rangle \subset Z(\pi_1(V^0,\Delta_{V,s}^0))$.
\end{proof}

\begin{corollary}
Let $x\in (X,\Delta)$ be a $n$-dimensional klt singularity.
Then, for every plt blow-up $\pi \colon Y \rightarrow X$ of $x\in (X,\Delta)$,  $\gamma_\pi \in Z(\pi_1^{\rm reg}(X,\Delta,x))$.
\end{corollary}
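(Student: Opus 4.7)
The plan is to derive this corollary directly from Theorem~\ref{thm:plt-loop-commutes} by transporting its conclusion along the surjective homomorphism of Proposition~\ref{prop:explicit-v_0}(3). The key observation is the following general fact: if $\Phi \colon G \twoheadrightarrow H$ is a surjective group homomorphism, then $\Phi(Z(G)) \subseteq Z(H)$, since any element of $H$ lies in the image of $\Phi$ and so commutes with the image of any central element of $G$.

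With that in mind, I would argue as follows. By Definition~\ref{def:orb.fund.gp}, the orbifold fundamental group $\pi_1(V^0,\Delta_V^0)$ is by convention $\pi_1(V^0,\Delta_{V,s}^0)$, so Theorem~\ref{thm:plt-loop-commutes} immediately yields $[\bar{\gamma}_\pi] \in Z(\pi_1(V^0,\Delta_V^0))$, where $\bar{\gamma}_\pi \subset V^0$ is the loop representing $\gamma_\pi$ as in Remark~\ref{rem:loop-plt}. Proposition~\ref{prop:explicit-v_0}(3) provides the surjective homomorphism $\Phi \colon \pi_1(V^0,\Delta_V^0) \twoheadrightarrow \pi_1^{\rm reg}(X,\Delta,x)$ induced by the inclusion $V^0 \hookrightarrow X$, and by the very definition of $\gamma_\pi$ (Remark~\ref{rem:loop-plt}) one has $\gamma_\pi = \Phi([\bar{\gamma}_\pi])$. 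Applying the general fact recalled above then gives $\gamma_\pi \in Z(\pi_1^{\rm reg}(X,\Delta,x))$, which is the desired conclusion.

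Since all of the substantive work---the Whitney-stratification setup, the trivialization of the punctured disk bundle over $E^{\rm int}_{2\epsilon}$, and the diagram chase producing centrality at the level of $\pi_1(V^0,\Delta_{V,s}^0)$---has already been carried out in the proof of Theorem~\ref{thm:plt-loop-commutes}, there is no real obstacle to overcome: the corollary is a purely formal consequence of that theorem combined with the surjectivity statement in Proposition~\ref{prop:explicit-v_0}(3).
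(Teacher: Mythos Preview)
Your proposal is correct and follows essentially the same approach as the paper: both invoke Theorem~\ref{thm:plt-loop-commutes} to obtain centrality of $\gamma_\pi$ in $\pi_1(V^0,\Delta_{V,s}^0)$ and then push this forward along the surjection of Proposition~\ref{prop:explicit-v_0}(3). Your version simply spells out the elementary group-theoretic fact that surjections preserve centrality and the identification $\pi_1(V^0,\Delta_V^0)=\pi_1(V^0,\Delta_{V,s}^0)$ a bit more explicitly than the paper does.
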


\begin{proof}
By Proposition~\ref{prop:explicit-v_0}, we know that the homomorphism $\pi_1(V^0,\Delta_{V,s}^0)\rightarrow \pi_1^{\rm reg}(X,\Delta,x)$ 
is surjective.
On the other hand, by Theorem~\ref{thm:plt-loop-commutes}, $\gamma_\pi \in Z(\pi_1(V^0,\Delta_{V,s}^0))$; 
hence, by Proposition~\ref{prop:explicit-v_0}(3), also $\gamma_\pi \in Z(\pi_1^{\rm reg}(X,\Delta,x))$.
\end{proof}

\section{Proof of the Jordan property}\label{sec:proof-jordan}

In this section, we prove the main theorem of this article.

\begin{proof}[Proof of Theorem~\ref{intro-theorem:Jordan-klt}]
Let $x\in (X,\Delta)$ be a $n$-dimensional klt singularity.
Let $\pi \colon Y \rightarrow (X,\Delta)$ be a plt blow-up centered at $x\in (X,\Delta)$.
By Proposition~\ref{prop:explicit-v_0}, we have a differentiable fiber bundle morphism 
\[
0\rightarrow \mathbb{D}^0 \rightarrow
(V^0,\Delta^0_{V,s})\rightarrow (E^0,B^0)\rightarrow 0,
\]
a short exact sequence of groups
\[
1\rightarrow \mathbb{Z}/m\mathbb{Z}\rightarrow \pi_1(V^0,\Delta^0_{V,s})\rightarrow \pi_1(E^0,B^0)\rightarrow 1,
\]
and a surjection $\pi_1(V^0,\Delta_{V,s}^0)\rightarrow \pi_1^{\rm reg}(X,\Delta_s,x)$.
By definition of $\pi_1(V^0,\Delta_V^0)$ and $\pi_1^{\rm reg}(X,\Delta,x)$, this gives a surjection
$\pi_1(V,\Delta_V^0)\rightarrow \pi_1^{\rm reg}(X,\Delta,x)$.
Hence, in order to obtain the Jordan property with rank $n$ for the regional fundamental group
$\pi_1^{\rm reg}(X,\Delta,x)$,
it suffices to prove the Jordan property for rank $n$ for the fundamental group
$\pi_1(V^0,\Delta_V^0)$.
As in the proof of~\cite{Bra20}*{Theorem 7}, we can find a log resolution
$(Z,B_Z)$ of $(E,B)$ that is an isomorphism
over $(E^0,B^0)$, so that the inclusion $E^0\hookrightarrow Z$ induces an isomorphism
\[
\pi_1(E^0,B^0)\simeq \pi_1(Z,B_Z).
\]
We can realize the log resolution $(Z,B_Z)$ of $(E,B)$ as an embedded resolution in $Y$
that is an isomorphism over $V^0$.
We denote by $Y_Z$ the quasi-projective variety obtained by the embedded resolution of $E$ inside $Y$.\\

\noindent 
\textbf{Claim 1:} 
There exists a constant $f(n)$, only depending on $n$, satisfying the following.
There is a closed point $z\in Z$ so that the image of the homomorphism
$\pi_1^{\rm reg}(Z,B_Z,z)\rightarrow \pi_1(Z,B_Z)$ has index at most $f(n)$.

\begin{proof}[Proof of Claim 1]
Define $G \coloneqq \pi_1(E^0,B^0)\simeq \pi_1(Z,B_Z)$.
Let $(E_1^0,B_1^0)$ be the orbifold universal cover of $(E^0,B^0)$ and
$(E_1,B_1)$ be a $G$-invariant codimension two compactification as considered in the proof of Proposition~\ref{prop:Jordan-global-fundamental}.
Let $Z_1$ be the normalization of the main component of $E_1\times_{E} Z$.
Note that $Z_1$ is endowed with a $G$-action 
and a boundary $B_{Z_1}$ so that $(Z,B_Z)$
is the quotient of $(Z_1,B_{Z_1})$ by $G$.
Thus, we obtain a diagram as follows
\begin{equation}\label{quotient-G}
 \xymatrix{
 (Z_1,B_{Z_1}) \ar[r]^-{/G}\ar[d] & (Z,B_Z) \ar[d] \\
 (E_1,B_1) \ar[r]^-{/G}&  (E,B).
}
\end{equation}
Note that $(E_1,B_1)$ is weakly log Fano, hence it is rationally connected.
Since both vertical maps are birational, 
we conclude that $Z_1$ is rationally connected as well.
By~\cite{PS16}*{Theorem 4.2}, we conclude that there is a subgroup $H\leqslant G$ of index at most $f(n)$, acting with a fixed point on $Z_1$.
Here, $f(n)$ is a constant only depending on $n$.
Replacing $H$ with its normal core in $G$, we may assume that it is normal.
We show that the image $z$ of such point on $Z$ is a point for which the image of
$\pi_1^{\rm reg}(Z,B_Z,z)\rightarrow \pi_1(Z,B_Z)$ has index at most $f(n)$.
Indeed, we have a commutative diagram

\begin{equation}\label{quotient-A}
 \xymatrix{
 (U_1,B_{U_1})\ar[r]^-{/H}
 \ar[d] &
 (U_2,B_{U_2})\ar[r]^-{/(G/H)}
\ar[d]
 &
 (U,B_Z)\ar[d] \\\
 (Z_1,B_{Z_1}) \ar[r]^-{/H} & 
 (Z_2,B_{Z_2})\
 \ar[r]^-{/(G/H)}& 
(Z,B_Z) 
}
\end{equation}
so that $(U,B_Z)$ is a open analytic subset computing $\pi_1^{\rm reg}(Z,B_Z,z)$, i.e., 
$U$ is an analytic neighborhood of $z$ so that
\[
\pi_1^{\rm reg}(Z,B_Z,z)\simeq 
\pi_1(U\cap Z^{\rm reg},B_Z).
\]
Here, $U_1$ and $U_2$ are the preimages image of $U$ with respect to the quotient morphism $Z_1\rightarrow Z$.
Note that $(Z_1,B_{Z_1})$  is the universal cover of $(Z_2,B_{Z_2})$.
Indeed, over $E^0$ the quotient morphism $E_1\rightarrow E$ is the universal cover of $(E^0,B^0)$, the birational morphism $Z\rightarrow E$ is an isomorphism over $E^0$, and
$(E^0,B^0)\hookrightarrow (Z,B_Z)$ induces an isomorphism of fundamental groups.
Note that $U_2$ may have several connected components.
Let $z_2$ be a pre-image of $z$ on $U_2$.
This give us a commutative diagram as follows
\[
\xymatrix{
H \ar@{^{(}->}[r] \ar[d]_-{\id_H}
&
\pi_1^{\rm reg}(U_2,B_{U_2},z_2)
\ar[r]\ar[d] 
&
\pi_1^{\rm reg}(Z,B_Z,z)
\ar[d] \\
H \ar[r]^-{\sim} &
\pi_1(Z_2,B_{Z_2})
\ar@{^{(}->}[r]
&
\pi_1(Z,B_Z).
}
\]
Concluding the proof of the claim.
\end{proof}
Note that $z$ is a toroidal point of $Y_Z$.
This follows from the fact that $(Z,B_Z)$ is log smooth at $z$.
In particular, 
the regional fundamental group of $z$ in $(Z,B_Z)$ is a finite abelian group of rank at most $n-1$ (see, e.g.,~\cite{CLS11}*{12.1.10}).
Denote by $F$ the exceptional divisors of the projective birational morphism 
$Z\rightarrow E$.
We have that
\begin{equation}\label{ab-inf}
\pi_1^{\rm reg}(Z\setminus \Supp (F), B_Z, z)    
\end{equation}
is an abelian group with at most $n-1$ generators, which may be infinite.
Notice that $z$ may not belong to $Z \setminus \Supp(F)$, but the inverse limit of analytic open sets is centered at $z$.
Indeed, the above is the regional fundamental group of a log toric singularity after possibly cutting out some toric boundaries.
Hence, this fundamental group is abelian with at most $n-1$ generators by~\cite{CLS11}*{12.1.10}.
Note that the group in~\eqref{ab-inf} 
surjects onto $\pi_1^{\rm reg}(Z,B_Z,z)$.
We conclude that the image of
$\pi_1^{\rm reg}(Z\setminus \Supp (F), B_Z,z)$
in $\pi_1(Z,B_Z)$ via the homomorphism induced by the inclusion has index at most $f(n)$.
Let $U'_z$ be an analytic neighborhood of $z$ in $Z$ computing the fundamental group in~\eqref{ab-inf}.
After shrinking around $z$ if necessary, we may assume that 
$U$ and $U'_z$ coincide.
This means that
\[
\pi_1^{\rm reg}(Z\setminus \Supp (F), B_Z,z)\simeq 
\pi_1(U'_z \setminus \Supp (F), B_Z, z).
\]
Note that the fundamental group
of $(U'_z,\setminus \Supp (F), B_Z,z)$ is abelian generated by at most $n-1$ elements.
On the other hand,
$U_z \coloneqq U'_z\setminus \Supp (F)$ can be identified with an analytic open subset of $E^0$.
Indeed, $U_z$ is an analytic open subset of the log smooth locus computing the regional fundamental group of the image $e$ of $z$ in $(E,B)$.\\

\noindent 
\textbf{Claim 2:} The homomorphism
$\pi_1(U_z,B^0)\rightarrow \pi_1(E^0,B^0)$
has image of index at most $f(n)$.

\begin{proof}[Proof of Claim 2]
We have a commutative diagram
\[
\xymatrix{ 
\pi_1(U_z,B^0)\ar[r]^-{\sim }\ar[d]  & \pi_1^{\rm reg}(Z\setminus \Supp (F), B_Z,z) \ar[d]  \\
\pi_1(E^0,B^0)\ar[r]^-{\sim } & \pi_1(Z,B_Z).
}
\]
Since the image of the right vertical homomorphism has index at most $f(n)$,
we conclude that the image of the left vertical homomorphism has index at most $f(n)$ as well.
\end{proof}

Recall that $U_z$ can be identified with an analytic open subset of $E^0$.
Recall that there exists an open analytic subset $E^{\rm int}_{2\epsilon}$ of $E^0$ on which the retraction is trivial.\\

\noindent
\textbf{Claim 3:}
There exists an open analytic subset $U_{z,\delta}$ contained in $E^{\rm int}_{2\epsilon}$
so that $\pi_1(U_{z,\delta})$ is abelian of rank at most $n-1$ and 
$\pi_1(U_{z,\delta})\rightarrow \pi_1(E^0,B^0)$ has image of index at most $f(n)$.

\begin{proof}[Proof of Claim 3]
Consider the log resolution
$Z\rightarrow E$ of $(E,B)$.
Consider the natural stratification of $Z$ with respect to the log pair structure $(Z,B_Z)$, i.e,
 the strata are the complement of $B_Z$ and the intersections of components of $B_Z$.
Fix $\delta>0$.
Note that for $\epsilon$ small enough, we have that
$Z^{\rm int}_{\delta}$ is contained in the pre-image of 
$E^{\rm int}_{2\epsilon}$.
We define $U_{z,\delta}$ to be the intersection of $U_z$ with $Z^{\rm int}_\delta$.
Note that for $\delta$ small enough, the analytic open set $U_{z,\delta}$ is homotopic to a product of at most $n-1$ disks.
Hence, the fundamental group 
$\pi_1(U_{z,\delta})$ is abelian of rank at most $n-1$.
It suffices to check the second condition.
Note that $\pi_1(U_{z,\delta})\rightarrow \pi_1(U_z,B^0)$ is surjective.
Hence, the
homomorphism
$\pi_1(U_{z,\delta})\rightarrow \pi_1(E^0,B^0)$ induced by the inclusion has image of index at most $f(n)$.
\end{proof}

We can identify $U_{z,\delta}$ with its image in $E$.
Hence, we can consider it as an analytic subset of
$E^{\rm int}_{2\epsilon}$.
Recall that the retraction is trivial over $E^{\rm int}_{2\epsilon}$.
Let $V_{z,\delta}$ be the pre-image of $U_{z,\delta}$ with respect to the retraction.
Then, we have that $\pi_1(V_{z,\delta})$ is an abelian group of rank at most $n$.
Thus, we have a commutative diagram as follows 
\[
\xymatrix{ 
1\ar[r] & \zz\ar[r]\ar[d] & \pi_1(V_{z,\delta})\ar[r]\ar[d] &
\pi_1(U_{z,\delta})\ar[r]\ar[d] & 1\\
1\ar[r] & \mathbb{Z}/m\mathbb{Z}\ar[r] & \pi_1(V^0,\Delta_{V}^0)\ar[r] &
\pi_1(E^0,B^0)\ar[r] & 1}
\]
We conclude that the image of $\pi_1(V_{z,\delta})$ in $\pi_1(V^0,\Delta^0_V)$ has index at most $f(n)$.
Hence, $\pi_1^{\rm reg}(X,\Delta,x)$ has an abelian subgroup of index at most $f(n)$ and rank at most $n$.
Then, passing to the normal core of this abelian subgroup, 
we obtain a normal abelian subgroup 
of index at most $f(n)!$ and rank at most $n$.
\end{proof}

\section{Simultaneous index one covers}\label{sec:simultaneous-index}

In this section, we prove an application regarding simultaneous index one covers.

\begin{proof}[Proof of Corollary~\ref{introcor-1}]
By Theorem~\ref{intro-theorem:Jordan-klt},
there exists a subgroup $A\leqslant \pi_1^{\rm reg}(x,\Delta,x)$
that is an abelian finite group of rank at most $n$
and index bounded by $c'(n)$.
Hence, the group $\pi_1^{\rm reg}(X,\Delta,x)$ can be generated by at
most $n+c'(n)$ elements.
Setting $b(n)$ to be $n+c'(n)$, the claim follows.
\end{proof}

\iffalse
\begin{proof}[Proof of Corollary~\ref{introcor-2}]
Recall that finite quasi-\'etale covers of $x\in X$ are in correspondence
with subgroups of $\pi_1^{\rm alg}(x\in X)$,
whenever $x\in X$ is an isolated singularity.
Indeed, any quasi-\'etale cover will be \'etale on $X\setminus \{x\}$ by purity of the branch locus.
Let $X\leftarrow X_1$ be the cover corresponding to $A_0$, 
where $A_0$ is as in Theorem~\ref{intro-thm:Jordan-algebraic-fundamental}.
Then, the degree of $X\leftarrow X_1$ is at most $b(n)$.
Then, we have that $\pi_1^{\rm alg}(x_1\in X_1) \simeq A_0$, 
which has at most $n$ generators $a_1,\dots a_n$.
For each $i$, we may define the morphism $X_i \leftarrow X_{i+1}$
to be the finite quasi-\'etale map defined by the inclusion
$\langle a_1,\dots, a_{n-i} \rangle \supset \langle a_1,\dots, a_{n+1-i}\rangle$.
Observe that $K_{X_{n+1}}$ is $\qq$-Cartier since it is the pull-back of $K_X$ to $X_{n+1}$.
In particular, we get that 
$\pi_1^{\rm alg}(x_{n+1} \in X_{n+1})=0$, 
then $K_{n+1}$ is a Cartier divisor on $X_{n+1}$.
Since the map $X_{n+1} \rightarrow X$ is finite and quasi-\'etale,
we conclude that $X_{n+1}$ has klt singularities, 
but since $K_{n+1}$ is Cartier, it follows that its singularities are canonical.
\end{proof}
\fi

\begin{proof}[{Proof of Theorem \ref{thm:effective-index-one-cover}}]

Let $x\in X$ be a klt singularity.
Per conventions in this work, $x\in X$ is an algebraic singularity, but we may consider Zariski or analytic neighborhoods of it.
We construct a finite morphism $\phi \colon \tilde X\rightarrow X$ as follows:
\begin{itemize}
    \item 
in the case of an analytic neighborhood, 
we let $\phi \colon \tilde X\rightarrow X$ be the closure of the universal cover of $X\setminus {\rm Sing}(X)$;
    \item 
in the fully algebraic setting, 
we let $\phi\colon \tilde X\rightarrow X$ be the finite morphism constructed in Lemma~\ref{lem:com-index-one-cover}.
\end{itemize}
In both cases, the morphism $\phi$ satisfies the following conditions:
\begin{itemize} 
    \item $\phi$ is \'etale in codimension one;
    \item $x\in X$ has a unique pre-image $\tilde x \in \tilde X$; and
    \item every $\qq$-Cartier integral Weil divisor on $\tilde X$ is Cartier at $\tilde x$.
\end{itemize}
The rest of the proof proceeds without differences for both settings, hence, we do not distinguish between those two.

Let $\pi \colon Y \rar X$ be a plt blow-up centered at $x$.
Denote by $E$ the $\pi$-exceptional divisor.
Let $\tilde Y$ be the normalization of the main component of $Y \times_X \tilde X$.
As $\phi$ is Galois, we have $X=\tilde X/G$ for a finite gruop $G$.
Then, $G$ acts on $Y \times_X \tilde X$, and preserves the main component.
Thus, the main component has a $G$-action, and this lifts to its normalization $\tilde Y$ \cite{Spe}.
Thus, the morphism $\psi \colon \tilde Y \rar Y$ is a finite Galois morphism with Galois grup $G$.
Indeed, by \cite{Sta}*{Tag 01WL}, $\tilde X \times_X Y \rar Y$ is a finite morphism.
Then, by \cite{Sta}*{Tag 035C}, the inclusion of the main component of $\tilde X \times_X Y \rar Y$ into $\tilde X \times_X Y \rar Y$ is finite.
By \cite{Sta}*{Tag 035R}, the morphism from $\tilde Y$ to the main component of $\tilde X \times_X Y \rar Y$ is finite.
Finally, the induced morphism $\psi \colon \tilde Y \rar Y$ is finite, as it is the composition of finite morphisms \cite{Sta}*{Tag 01WK}.
Thus, we have $Y = \tilde Y / G$.

Since $\phi$ is \'etale in codimension 1, we have $\K \tilde X. = \phi^* \K X.$.
Thus, the induced morphism $\psi$ is \'etale in codimension 1 away from $E$.
Set $\tilde E \coloneqq (\psi^*(E))_{\rm red}$, where $\psi^*$ denotes the pull-back under a finite morphism.
By the Riemann--Hurwitz formula, we have $\psi^* (\K Y. + E) = \K \tilde Y. + \tilde E$.
Then, by \cite{KM98}*{Proposition 5.20}, it follows that $(\tilde Y, \tilde E)$ is plt.
Notice that $\tilde E$ is connected, since $x$ has a unique pre-image in $\tilde x$ and, by the Riemann--Hurwitz formula, the fiber of $\tilde Y \rar \tilde X$ over $\tilde x$ coincides with the non-klt locus of $(\tilde Y, \tilde E)$.
Thus, $\tilde E$ is irreducible and normal.
Then, it follows that $\rho \colon (\tilde Y, \tilde E) \rar Y$ is a plt blow-up with center $\tilde x$.

As the action of $G$ fixes $\tilde x$, it follows that $G$ admits a homomorphism $G \rar \mathrm{Aut}(\tilde E)$.
Let $H$ denote the kernel of this group homomorphism.
Since $G$ acts faithfully on $\tilde Y$,
by Corollary \ref{group_fix_divisor}, $H \simeq \mathbb{Z}/m\mathbb{Z}$ for some positive integer $m$.
Then, by \cite{Bir16b}*{Corollary 1.3}, the group $G' \coloneqq G/H$ satisfies the Jordan property.
In particular, there is a normal abelian subgroup $A' \subset G'$ such that the order of $G'/A'$ is bounded by $d(n-1)$, 
where $d(n-1)$ is as in Proposition \ref{prop:Jordan-Bir}.
Denote by $A$ the preimage of $A'$ in $G$.
By Proposition \ref{prop:Jordan-Bir}, we may assume that the rank of $A'$ is at most $n-1$.
Notice that $A$ is quasi-abelian, as it is the extension of the abelian group $A'$ by a copy of $\mathbb{Z}/l\mathbb{Z}$, with $l|m$.
Since $A$ is normal in $G$, it induces an intermediate cover $\tilde X \rar X_1 \rar X$, where both morphisms are Galois.
The Galois group corresponding to $X_1 \rar X$ is $G/A \simeq G'/A'$.
Therefore, the morphism has degree bounded by $d(n-1)$.
On the other hand, the Galois group of $\tilde X \rar X_1$ is $A$.
First, we can consider the intermediate cover $\tilde X \rar X_2 \rar X_1$ induced by $H \cap A \simeq \mathbb{Z}/l\mathbb{Z}$.
Notice that $X_2 \rar X_1$ is a cyclic cover.
As $H \cap A$ is normal in $A$, the morphism $\tilde X \rar X_2$ corresponds to the abelian group $A/A\cap H \simeq A'$.
As $A'$ is abelian, we can conclude the proof in a similar fashion as in Corollary~\ref{introcor-1}.
In particular, notice that, as $A'$ has rank at most $n-1$, there are at most $n$ cyclic covers, and $\tilde X = X_{k}$ for some $k \leq n+1$.
Finally, we can choose $b'(n)=d(n-1)$.
\end{proof}

\begin{lemma}\label{lem:com-index-one-cover}
Let $x\in X$ be a klt singularity. There exists a Galois cover $\gamma \colon \tilde X \rightarrow X$ with the following properties:
\begin{itemize} 
    \item $\gamma$ is \'etale in codimension one;
    \item $x\in X$ has a unique pre-image $\tilde x \in \tilde X$; and
    \item every $\qq$-Cartier integral Weil divisor on $\tilde X$ is Cartier at $\tilde x$.
\end{itemize}
\end{lemma}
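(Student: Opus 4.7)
The plan is to take $\tilde X$ to be the ``universal'' quasi-\'etale Galois cover of $X$ near $x$, corresponding to the trivial subgroup of the (finite) regional fundamental group, and to verify each required property, using the Henselization at $x$ to establish the unique preimage.

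By Theorem~\ref{intro-thm:braun}, the regional fundamental group $G \coloneqq \pi_1^{\rm reg}(X, x)$ is finite. By Artin's comparison theorem between the topological and \'etale fundamental groups, after shrinking $X$ to a small Zariski neighborhood of $x$, $\pi_1^{\et}(X^{\rm reg})$ is also $G$. Taking the Galois \'etale cover of $X^{\rm reg}$ corresponding to the trivial subgroup of $G$ and extending across the singular locus by normalization (Zariski--Nagata purity of the branch locus) yields a finite Galois quasi-\'etale morphism $\gamma \colon \tilde X \to X$ with Galois group $G$.

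For the unique preimage property, I would pass to the Henselization $X^h = \Spec(\mathcal{O}_{X, x}^h)$. The base change $\tilde X^h \coloneqq \tilde X \times_X X^h$ is a Galois quasi-\'etale cover of $X^h$ which corresponds to the trivial subgroup of $\pi_1^{\et}((X^h)^{\rm reg}) = G$. Hence $\tilde X^h$ is connected, its structure sheaf a single Henselian local ring, and its closed point a unique preimage $\tilde x$ of the closed point of $X^h$. Shrinking $X$ further, this translates to $\gamma^{-1}(x) = \{\tilde x\}$ in the algebraic setting.

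For the third property, I would invoke the standard correspondence between torsion elements of the local class group $\mathrm{Cl}(\mathcal{O}_{\tilde X, \tilde x})$ and non-trivial finite cyclic quasi-\'etale covers of $\tilde X$ near $\tilde x$. Since $\pi_1^{\rm reg}(\tilde X, \tilde x) = 1$ by construction -- combined with Braun's theorem applied to the klt cover $\tilde X$ at $\tilde x$ -- no such non-trivial covers exist, and $\mathrm{Cl}(\mathcal{O}_{\tilde X, \tilde x})_{\rm tor} = 0$. Any $\mathbb{Q}$-Cartier Weil divisor on $\tilde X$ then determines a torsion (hence trivial) class at $\tilde x$, and is therefore Cartier at $\tilde x$. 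The main obstacle is to make the unique preimage argument precise: one must carefully identify the base change $\tilde X^h$ with the ``universal'' quasi-\'etale cover of the Henselization, which itself requires the Artin-type comparison of topological and \'etale fundamental groups in the setting of klt singularities.
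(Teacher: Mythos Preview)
Your approach is genuinely different from the paper's and, with one correction, it works. The paper builds $\tilde X$ iteratively: at each stage it picks a $\qq$-Cartier Weil divisor that is not yet Cartier at the marked point, forms the $G_i$-equivariant index-one cover associated to the $G_i$-orbit of that divisor class in the local class group, and repeats; termination comes from~\cite{GKP}*{Theorem~1.1}, not from finiteness of $\pi_1^{\rm reg}$. Your route instead consumes Braun's theorem up front and produces the whole cover in one step as the universal quasi-\'etale cover. This is shorter but uses the heavier input, and it is exactly what the paper does in the \emph{analytic} branch of the proof of Theorem~\ref{thm:effective-index-one-cover}; the lemma is there precisely to supply an independent algebraic construction.

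One claim, however, is false as stated: after Zariski-shrinking around $x$, $\pi_1^{\et}(X^{\rm reg})$ need \emph{not} equal $G=\pi_1^{\rm reg}(X,x)$ (already for a smooth point the Zariski-local $\pi_1$ of the regular locus can be nontrivial). What is true is that inclusion of a small analytic neighborhood yields a surjection $\pi_1(X^{\rm reg})\twoheadrightarrow G$, and you should define $\tilde X$ as the Galois cover corresponding to its kernel. With this fix the rest goes through: over a small analytic $U\ni x$ the smooth locus of $\gamma^{-1}(U)$ is the universal cover of $U^{\rm reg}$, hence connected, so $\gamma^{-1}(x)$ is a single point and $\pi_1^{\rm reg}(\tilde X,\tilde x)=1$; then any non-Cartier $\qq$-Cartier divisor on $\tilde X$ would, via its index-one cover and the injection $\mathrm{Cl}(\mathcal{O}_{\tilde X,\tilde x})\hookrightarrow \mathrm{Cl}(\mathcal{O}_{\tilde X,\tilde x}^{\rm hol})$, produce a nontrivial local quasi-\'etale cover, a contradiction. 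Arguing directly with analytic neighborhoods in this way also lets you bypass the Henselization step you flagged as the main obstacle.
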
 

\begin{proof}
We construct a sequence of Galois covers $X_i\rightarrow X$ that are \'etale in codimension one.
We proceed inductively. We set $X_0=X$, $x_0=x$, and $G_0=\id_X$.
For every $i\geq 1$, we have a singularity $x_i\in X_i$, a $G_i$-action on $X_i$ so that the quotient is $X$, and $x_i$ is fixed by $G_i$.
Here $x_i$ denotes the unique pre-image of $x$ in $X_i$.
We construct $X_{i+1}$ as follows.
Let $D_i$ be a $\qq$-Cartier integral Weil divisor on $X_i$ that is not Cartier at $x_i$.
Up to shrinking around $x_i$, we may assume that the Cartier index of $D_i$ in $X_i$ is attained at $x_i$.
Let $m_i$ be the Cartier index of $D_i$ at $x_i$.
We will construct a $G_i$-equivariant index one cover of $D_i$.
Let $H$ be the subgroup on the local class group of $X_i$ at $x_i$ generated by elemenets of the form
$g \cdot D$ with $g\in G_i$.
Notice that $G_i$ is finite, the Cartier index of each $g \cdot D$ is finite, and the local class group is abelian.
Thus, $H$ is a finite group.
We define $X_{i+1}$ to be the relative spectrum of $\bigoplus_{D\in H} \mathcal{O}_X(D)$.
Then, $X_{i+1}\rightarrow X_i$ is the quotient morphism induced by the action of $H$.
Since we can decompose this cover as a sequence of cyclic covers, it follows from \cite{Kol13b}*{2.48.(1)} that
$x_i$ has a unique pre-image $x_{i+1}$ in $X_{i+1}$.
Note that the pull-back of $D_i$ to $X_{i+1}$ is a Cartier divisor.
We have a finite group $G_{i+1}$ acting on $X_{i+1}$, which fits in an exact sequence
\[
1\rightarrow H \rightarrow G_{i+1}\rightarrow G_i \rightarrow 1,
\]
so that $X_{i+1}\rightarrow X_i$ is $G_{i+1}$-equivariant, see~\cite{AG10}*{Theorem 5.1}).
The quotient of $X_{i+1}$ by $G_{i+1}$ is $X$.
Each morphism $X_i\rightarrow X_{i-1}$ is finite and \'etale in codimension one, but none of them is \'etale over $x_{i-1}$.
By~\cite{GKP}*{Theorem 1.1}, the above sequence is finite. 
Hence, for some $j\geq 1$ we have that every $\qq$-Cartier Weil divisor on $X_j$ is Cartier at $x_j$.
Thus, we can let $\tilde X=X_j$ and $\tilde x=x_j$.
\end{proof}

\section{Local class group}\label{sec:local-class}

In this section, we prove an application to the local class group of $n$-dimensional klt singularities.

Let $x \in X$ be a closed point in $X$.
We denote by $\mathcal{O}_{X,x}^{\rm hol}$ the analytic local ring of $x \in X$ and by $X_{\rm sing}$ the singular locus of $X$.

\begin{lemma} \label{lemma_injection}
Let $x \in X$ be a rational singularity.
There exists an injective morphism
\[
{\rm Cl}(\mathcal{O}_{X,x}^{\rm hol}) \hookrightarrow \varinjlim_U H^2(U \setminus X_{\rm sing}, \mathbb Z),
\]
where $U$ runs over the analytic neighborhoods of $x$.
\end{lemma}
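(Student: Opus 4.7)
The plan is to realize ${\rm Cl}(\mathcal{O}_{X,x}^{\rm hol})$ as a direct limit of Picard groups of smooth analytic open subsets, and then to relate those Picard groups to $H^{2}$ via the exponential exact sequence. Since $X$ is normal, every Weil divisor on $X$ is Cartier on $X\setminus X_{\rm sing}$; passing to analytic germs this gives
\[
{\rm Cl}(\mathcal{O}_{X,x}^{\rm hol}) \;=\; \varinjlim_U \Pic(U \setminus X_{\rm sing}),
\]
where $U$ ranges over Stein analytic neighborhoods of $x$. It therefore suffices to produce, for each sufficiently small such $U$, a natural injection $\Pic(U \setminus X_{\rm sing}) \hookrightarrow H^{2}(U \setminus X_{\rm sing}, \zz)$ compatible with restriction, and then take the direct limit.

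Fix such a Stein $U$, and choose a log resolution $\pi \colon Y \to U$ that is an isomorphism over $U \setminus X_{\rm sing}$, with $E = \pi^{-1}(X_{\rm sing})$ a simple normal crossing divisor. Since $x\in X$ is rational, $R^{i}\pi_{*}\mathcal{O}_{Y}=0$ for $i>0$ in a neighborhood of $x$, and after shrinking $U$ one may assume this holds on all of $U$. Combined with Cartan's Theorem B on the Stein open $U$, the Leray spectral sequence gives $H^{i}(Y,\mathcal{O}_{Y})=0$ for $i>0$. The exponential exact sequence on $Y$ then produces an injection
\[
c_{1}\colon \Pic(Y)\hookrightarrow H^{2}(Y,\zz).
\]

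To transfer this injection to $Y\setminus E \cong U\setminus X_{\rm sing}$, I juxtapose the right-exact sequence of Picard groups
\[
\bigoplus_{j}\zz\cdot [\mathcal{O}_{Y}(E_{j})] \longrightarrow \Pic(Y) \longrightarrow \Pic(Y\setminus E) \longrightarrow 0,
\]
indexed by the irreducible components $E_{j}$ of $E$, with the long exact sequence of the pair $(Y, Y\setminus E)$:
\[
\cdots \to H^{2}_{E}(Y,\zz)\xrightarrow{\alpha} H^{2}(Y,\zz) \to H^{2}(Y\setminus E,\zz)\to\cdots.
\]
A standard Mayer--Vietoris argument for local cohomology, together with the Thom isomorphism for the smooth components, identifies $H^{2}_{E}(Y,\zz)=\bigoplus_{j}H^{0}(E_{j},\zz)$ and sends the fundamental class of $E_{j}$ to $c_{1}(\mathcal{O}_{Y}(E_{j}))$ under $\alpha$. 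A diagram chase using the injectivity of $c_{1}$ on $\Pic(Y)$ then yields the desired injection $\Pic(U\setminus X_{\rm sing})\hookrightarrow H^{2}(U\setminus X_{\rm sing},\zz)$. Taking the direct limit over $U$ completes the proof.

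The main technical point is the compatibility in this last diagram chase: one must verify that the Thom class of $E_{j}$ in $H^{2}_{E}(Y,\zz)$ corresponds, via $\alpha$, precisely to $c_{1}(\mathcal{O}_{Y}(E_{j}))$, so that the kernel of $\Pic(Y)\to \Pic(Y\setminus E)$ matches the kernel of $H^{2}(Y,\zz)\to H^{2}(Y\setminus E,\zz)$ under $c_{1}$. This is a standard but nontrivial fact about cycle classes of effective Cartier divisors on complex manifolds, and it is the linchpin allowing one to descend the injection $\Pic \hookrightarrow H^{2}$ from $Y$ down to $Y\setminus E$.
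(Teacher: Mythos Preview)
Your argument is correct, but it follows a genuinely different route from the paper's. The paper works directly on $U\setminus X_{\rm sing}$ with the exponential sequence: if a class $\mathfrak{a}$ dies in $H^2$, it comes from $\varinjlim H^1(U\setminus X_{\rm sing},\mathcal{O})$, which is a $\cc$-vector space and hence divisible; this forces $\mathfrak{a}$ to generate a divisible subgroup of ${\rm Cl}(\mathcal{O}_{X,x}^{\rm hol})$, contradicting Storch's theorem that this class group is finitely generated for rational singularities. Rationality enters only through the citation to Storch.

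Your approach instead passes to a log resolution $Y\to U$ and uses rationality directly: the vanishing $R^i\pi_*\mathcal{O}_Y=0$ plus Leray and Cartan~B give $H^i(Y,\mathcal{O}_Y)=0$, so $c_1\colon \Pic(Y)\hookrightarrow H^2(Y,\zz)$, and you descend this along $Y\to Y\setminus E$ by matching the kernel of $\Pic(Y)\to\Pic(Y\setminus E)$ with the image of $H^2_E(Y,\zz)\to H^2(Y,\zz)$ via Thom classes. This is more hands-on and avoids invoking Storch, at the cost of the local-cohomology/Thom-class bookkeeping; the paper's argument is slicker but outsources the hard input to Storch (whose proof, incidentally, also goes through a resolution). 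One small point worth tightening: you do not actually need the full surjectivity of $\Pic(Y)\to\Pic(Y\setminus E)$ in the analytic category---it suffices that every class in the image of ${\rm Cl}(\mathcal{O}_{X,x}^{\rm hol})$ lifts to $\Pic(Y)$, which is immediate by pulling back Weil divisors to the smooth $Y$. Also note that the injection you build is simply the first Chern class on $U\setminus X_{\rm sing}$, so naturality under restriction (needed for the direct limit) is automatic and independent of the chosen resolution.
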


\begin{proof}
We follow the proof of \cite{Fle}*{Satz 6.1}.
By the exactness of direct limits and the exponential sequence, there exists an exact sequence
\[
\varinjlim_U H^1(U \setminus X_{\rm sing}, \O X. ^{\rm hol}) \xrightarrow{\alpha} \varinjlim_U {\rm Pic}(U \setminus X_{\rm sing}) \xrightarrow{\beta} \varinjlim_U H^2(U \setminus X_{\rm sing}, \mathbb Z).
\]
Furthermore, there exists also a morphism 
\[
j \colon {\rm Cl}(\mathcal{O}_{X,x}^{\rm hol}) \rar \varinjlim_U {\rm Pic}(U \setminus X_{\rm sing}).
\]
which is injective, as $x \in X$ is a normal singularity: indeed, for every neighborhood $U$, by the $S_2$ property, we have $\O X.^{\rm hol}(U)=\O X.^{\rm hol}(U \setminus X_{\rm sing})$, and we have a natural inclusion $\O X.^{\rm hol}(U)\subset \O X,x.^{\rm hol}$.
\newline
Now, assume that $\beta(j(\mathfrak{a}))=0$ for some $\mathfrak{a} \in \mathrm{Cl}(\O X,x.^{\rm hol})$.
Then, there exists $z \in \varinjlim_U H^1(U \setminus X_{\rm sing}, \O X. ^{\rm hol})$ so that $\alpha(z)=j(\mathfrak a )$.
For every $n \in \mathbb N$, we consider the element $\frac{z}{n} \in \varinjlim_U H^1(U \setminus X_{\rm sing}, \O X. ^{\rm hol})$.
Therefore, the element $\frac{j(\mathfrak a)}{n}$ is well defined for every $n \in \mathbb N$.
Then, by \cite{Fle}*{Lemma 6.3}, the element $\frac{\mathfrak a}{n}$ is also well defined.
Therefore, if $\mathfrak a \neq 0$, ${\rm Cl}(\O X,x.^{\rm hol})$ has a divisible subgroup.
On the other hand, by \cite{Sto69}, the local class group $\mathrm{Cl}(\O X,x.^{\rm hol})$ of a normal analytic space is finitely generated provided that $(R^1\pi_*\O Y.)_x=0$, where $\pi \colon Y \rar X$ is a resolution of singularities.
By assumption, $x \in X$ is a rational singularity, thus $(R^1\pi_*\O Y.)_x=0$ for any resolution of singularities.
Therefore, $\mathfrak a \neq 0$ is impossible, and $\beta \circ j$ is injective.
\end{proof}

We prove an analogous result for the algebraic local class group of $x \in X$.

\begin{corollary} \label{cor_inj}
Let $x \in X$ be a rational singularity.
There exists an injective morphism
\[
{\rm Cl}(\mathcal{O}_{X,x}) \hookrightarrow \varinjlim_U H^2(U \setminus X_{\rm sing}, \mathbb Z),
\]
where $U$ runs over the analytic neighborhoods of $x$.
\end{corollary}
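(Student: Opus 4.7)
The plan is to reduce the statement to Lemma \ref{lemma_injection} by showing that the natural comparison morphism
\[
c \colon {\rm Cl}(\mathcal{O}_{X,x}) \longrightarrow {\rm Cl}(\mathcal{O}_{X,x}^{\rm hol}),
\]
induced by the faithfully flat inclusion of the algebraic local ring into the analytic one, is injective. Granting this, the corollary follows at once by composing $c$ with the injection ${\rm Cl}(\mathcal{O}_{X,x}^{\rm hol}) \hookrightarrow \varinjlim_U H^2(U \setminus X_{\rm sing}, \mathbb{Z})$ provided by Lemma \ref{lemma_injection}.

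To define $c$, I would send the class of a rank-one reflexive $\mathcal{O}_{X,x}$-module $M$ to the class of the double dual of $M \otimes_{\mathcal{O}_{X,x}} \mathcal{O}_{X,x}^{\rm hol}$. This is well-posed because $\mathcal{O}_{X,x} \hookrightarrow \mathcal{O}_{X,x}^{\rm hol}$ is a faithfully flat extension of normal local rings, so that the tensor product is torsion-free of rank one and its reflexive hull represents a well-defined divisor class. To verify injectivity of $c$, I would factor it further through the $\mathfrak{m}$-adic completion as
\[
{\rm Cl}(\mathcal{O}_{X,x}) \longrightarrow {\rm Cl}(\mathcal{O}_{X,x}^{\rm hol}) \longrightarrow {\rm Cl}(\widehat{\mathcal{O}}_{X,x}).
\]
Since $\mathcal{O}_{X,x}$ is the algebraic local ring at a normal point of a complex algebraic variety, it is excellent and analytically normal; by a classical result on class groups of analytically normal Noetherian local rings, the composed morphism ${\rm Cl}(\mathcal{O}_{X,x}) \to {\rm Cl}(\widehat{\mathcal{O}}_{X,x})$ is injective, which forces $c$ to be injective as well.

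The main, and essentially only, substantive step in this plan is the verification of the injectivity of $c$; the construction of $c$ and the final composition with Lemma \ref{lemma_injection} are formal once this is established. Since the underlying singularity is rational, and in particular normal, the analytic normality of $\mathcal{O}_{X,x}$ is automatic, so no additional machinery beyond standard commutative algebra is needed. An alternative, more hands-on approach would mimic the proof of Lemma \ref{lemma_injection} directly by constructing a morphism ${\rm Cl}(\mathcal{O}_{X,x}) \to \varinjlim_U {\rm Pic}(U \setminus X_{\rm sing})$ that sends an algebraic Weil divisor class to its restriction to the smooth locus of successively smaller analytic neighborhoods of $x$, and then running the exponential sequence argument verbatim; in that variant, the key point becomes showing that ${\rm Cl}(\mathcal{O}_{X,x})$ has no non-trivial divisible subgroup, which again follows from the injectivity of $c$ and Storch's finite generation theorem applied to ${\rm Cl}(\mathcal{O}_{X,x}^{\rm hol})$.
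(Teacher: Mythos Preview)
Your proposal is correct and follows essentially the same route as the paper: both reduce to Lemma~\ref{lemma_injection} by showing that the comparison map ${\rm Cl}(\mathcal{O}_{X,x}) \to {\rm Cl}(\mathcal{O}_{X,x}^{\rm hol})$ is injective via the chain $\mathcal{O}_{X,x} \subset \mathcal{O}_{X,x}^{\rm hol} \subset \widehat{\mathcal{O}}_{X,x}$ of faithfully flat local extensions. The only cosmetic difference is that the paper cites \cite{Fos73}*{Corollary~6.11} directly for the implication ``faithfully flat $\Rightarrow$ injective on class groups'', whereas you deduce injectivity of $c$ from injectivity of the composite into ${\rm Cl}(\widehat{\mathcal{O}}_{X,x})$.
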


\begin{proof}
Consider the inclusion of rings $\O X,x. \subset \O X,x.^{\rm hol} \subset \widehat{\O X,x.}$.
By properties of the completion, the ring extensions $\O X,x. \subset \widehat{\O X,x.}$ and $\O X,x.^{\rm hol} \subset \widehat{\O X,x.}$ are faithfully flat.
Thus, by standard properties of faithful flatness, it follows that $\O X,x. \subset \O X,x.^{\rm hol}$ is faithfully flat.
Then, by \cite{Fos73}*{Corollary 6.11}, the natural morphism ${\rm Cl}(\O X,x.) \rar {\rm Cl}(\O X,x.^{\rm hol})$ is an injection.
Then, the claim follows by Lemma \ref{lemma_injection}.
\end{proof}

\begin{proof}[Proof of Corollary~\ref{thm_class_group}]
The proof is the same for both ${\rm Cl}(\O X,x.)_{\rm tor}$ and ${\rm Cl}(\O X,x.^{\rm hol})_{\rm tor}$.
So, we focus on the case of ${\rm Cl}(\O X,x.)_{\rm tor}$.
Since $(X,\Delta)$ is klt, it has rational singularities \cite{KM98}*{Theorem 5.22}.
Thus, by Corollary \ref{cor_inj}, there is an injection ${\rm Cl}(\mathcal{O}_{X,x}) \hookrightarrow \varinjlim_U H^2(U \setminus X_{\rm sing}, \mathbb Z)
$.
By the uniqueness of analytic neighborhoods \cite{Dur}, there exists an analytic neighborhood $U_0$ of $x$ so that $\varinjlim_U H^2(U \setminus X_{\rm sing}, \mathbb Z)
=H^2(U_0 \setminus X_{\rm sing}, \mathbb Z)$.
Up to shrinking $U_0$, we may also assume $\pi_1^{\rm reg}(X,x)=\pi_1(U_0 \setminus X_{\rm sing})$.
Therefore, there is an injection
\[
{\rm Cl}(\mathcal{O}_{X,x})_{\rm tor} \hookrightarrow H^2(U_0 \setminus X_{\rm sing}, \mathbb Z)_{\rm tor}.
\]
By the Universal Coefficient Theorem \cite{Hat}*{Corollary 3.3 p. 196}, we have
\[
H^2(U_0 \setminus X_{\rm sing}, \mathbb Z)_{\rm tor}=H_1(U_0 \setminus X_{\rm sing}, \mathbb Z)_{\rm tor}.
\]
By Corollary \ref{introcor-1}, $\pi_1^{\rm reg}(X,x)=\pi_1(U_0 \setminus X_{\rm sing})$ is generated by at most $b(n)$ elements, and at most $n$ of such generators have order larger than $b(n)$.
Since $H_1(U_0 \setminus X_{\rm sing}, \mathbb Z)_{\rm tor}$ is the abelianization of $\pi_1(U_0 \setminus X_{\rm sing})=\pi_1(U_0 \setminus X_{\rm sing})_{\rm tor}$, it follows that $H_1(U_0 \setminus X_{\rm sing}, \mathbb Z)_{\rm tor}$ satisfies the same property.
That is, $H_1(U_0 \setminus X_{\rm sing}, \mathbb Z)_{\rm tor}$ is generated by at most $b(n)$ elements, and at most $n$ of such generators have order larger than $b(n)$.
Since ${\rm Cl}(\O X,x.)_{\rm tor}$ is a subgroup of $H_1(U_0 \setminus X_{\rm sing}, \mathbb Z)_{\rm tor}$, the claim follows by Lemma \ref{lemma_rank}.
\end{proof}

\section{Examples and questions}\label{sec:ex}

In this final section, we collect some remarks and examples; furthermore, we propose some questions related to the results contained in this article.
We now illustrate a few interesting examples related to the klt singularities and the Jordan property.

\begin{example}\label{ex:exceptional-singularities-example}{\em
A klt singularity $x\in X$ is said to be exceptional,
if for every boundary $B\geq 0$ on $X$ so that
$(X,B)$ is a log canonical pair, $(X,B)$ is either klt or log canonical 
with a unique log canonical place.
By~\cites{PS01, Bir16a}, we know that for every $n$-dimensional klt singularity
$x\in X$, we may find a boundary $B\geq 0$ on $X$ so that
$(X,B)$ is log canonical, it is not klt, and $m(K_X+B)\sim 0$,
where $m$ is a constant only depending on $n$.
Indeed, while the statement of \cite{Bir16a}*{Theorem 1.8} does not mention it explicitly, the complements constructed there are strictly log canonical, see Step 1 in the proof of \cite{Bir16a}*{Proposition 8.1}.
Hence, a $n$-dimensional exceptional klt singularity admits
a $\frac{1}{m}$-plt blow-up in the sense of~\cite{Mor18b}.
From the proof of Theorem~\ref{thm:plt-loop-commutes}, it follows that there is an exact sequence
\[
1\rightarrow \mathbb{Z}/k\mathbb{Z} \rightarrow \pi_1^{\rm loc}(X,x) \rightarrow N(n) \rightarrow 1, 
\]
where $N(n)$ is a group whose order is bounded by a constant that only depends on $n$.
Moreover, if we assume that $(X,B)$ is $\epsilon$-log canonical for some $\epsilon>0$,
and argue as in the proof of~\cite{Mor18b}*{Theorem 1}, it follows that
$k$ is bounded by a constant only depending on $n$ and $\epsilon$.
The boundedness statement in the exceptional case is also proved in~\cite{HLS19}.
}
\end{example}

\begin{example}\label{ex:toric-singularities}{\em 
Let $N$ be a finitely generated free abelian group of rank $n$,
and let $M={\rm Hom}_\zz(N,\zz)$ its dual.
We denote by $N_\qq$ and $M_\qq$ the corresponding $\qq$-vector spaces.
Let $\sigma\subset N_\qq$ be a rational polyhedral cone, 
and $\sigma^\vee \subset M_\qq$ its dual, i.e.,
all the elements of $M_\qq$ whose pairing with $\sigma$ is non-negative.
Assume that $\sigma$ is full-dimensional.
The affine toric variety 
\[
X(\sigma) \coloneqq {\rm Spec}\left(
\cc[\sigma^\vee \cap M]
\right)
\]
comes with a natural effective $(\cc^*)^n$-action,
and the maximal ideal corresponds to a closed unique fixed point.
The whole geometry of $X(\sigma)$ can be recovered from the combinatorics of $\sigma$.
For instance, $X(\sigma)$ is $\qq$-factorial if and only if 
the extremal rays of $\sigma$ are $\qq$-linearly independent.
We denote by $N_\sigma \subset N$ the lattice generated 
by $\delta \sigma = \sigma \setminus {\rm int}(\sigma)$ in $N$, where ${\rm int}(\sigma)$ denotes the relative interior of the cone, i.e., 
the interior of $\sigma$ in the $\qq$-vector subspace of $N_\qq$ spanned by elements in $\sigma$.
In this case, we have an isomorphism
\[
\pi_1^{\rm loc}(X(\sigma),x) \simeq 
N/N_\sigma.
\]
See, for instance~\cite{CLS11}*{12.1.10}.
On the other hand, if we denote by $\sigma_1 \subseteq \sigma$ the subfan consisting of the extremal rays, and by $N_1 \subset N$ the sublattice generated by $N_1$, we get an isomorphism \[
\pi_1^{\rm reg}(X(\sigma),x) \simeq 
N/N_1.
\]
Hence, the local as well as the regional fundamental group is simply a finite abelian group of rank at most $n$.
}
\end{example}

\begin{example}\label{ex:complexity-one}{\em 
Let $X$ be a $n$-dimensional affine klt variety
with a torus action of dimension $n-1$.
In~\cite{LLM19}, the authors give a description of the local fundamental group of such singularities in terms of isotropy groups of the action.
In the case of quasi-homogeneous log terminal surface singularities, we get a presentation
\[
\langle 
b_1,b_2,t \mid [b_1,t],[b_2,t],t^{e_1}b_1^{m_1},
t^{e_2}b_2^{m_2},t^{e_3}b^{m_3}
\rangle.
\]
Here, $(m_1,m_2,m_3)$ is a platonic triple, 
$\sum_{i=1}^3 \frac{e_i}{m_i}>0$,
$[b_1,t]$ is the commutator of $b_1$ and $t$,
and $b=(b_1b_2)^{-1}$, see~\cite{LLM19}*{Example 4.1}.
In this case, we can take the abelian subgroup to be
the one generated by $t$.
By doing so, the quotient of the local fundamental group 
by the abelian subgroup is just the local fundamental group
of a Du Val singularity, see~\cite{LLM19}*{Example 4.2}.
}
\end{example}

\iffalse
\begin{example}\label{ex:group-quotient}
{\em 
Let $x\in X\coloneqq\cc^n/G$, $G \subset {\rm GL}_n(\cc)$ be a group quotient singularity, where we denote by $x$ the image of $0\in \cc^n$.
We may assume that $G$ does not contain quasi-reflections.
By the Jordan property for ${\rm GL}_n(\cc)$, we know that there exists a constant $d(n)$, only depending on $n$, so that $G$ contains a normal subgroup of index at most $d(n)$ and rank at most $n$.
In particular, we have an exact sequence
\[
1\rightarrow A \rightarrow G \rightarrow N \rightarrow 1,
\]
so that $X$ is the quotient by $N$ 
of the $\qq$-factorial toric singularity $\cc^n/A$.
In this case, we have a natural surjective homomorphism
$G\rightarrow \pi_1^{\rm loc}(x\in X)$.
Given a class of finite groups satifying the Jordan property, 
all the homomorphic images of such class satisfy the Jordan property as well.
Hence, the Jordan property holds for $\pi_1^{\rm loc}(x\in X)$.
}
\end{example}
\fi

\begin{example}
\label{ex:profinite-completion-not-finite}
{\em 
It is well known that the local fundamental group of a non-klt singularity may be infinite.
In~\cite{KK14}, the authors proved that for every finitely presented group $G$, there is an isolated $3$-fold complex singularity $x\in X$ so that 
$\pi_1^{\rm loc}(X,x)\simeq G$.
\\
It is not hard to show that already for log canonical singularities there are examples with infinite $\pi_1^{\rm loc}(X,x)$.
Indeed, let $c\in C$ be the vertex of the cone over an elliptic curve.
Then, $\pi^{\rm loc}_1(C,c)\simeq \zz^2$.
}
\end{example}

We finish this section by stating some open questions about the local fundamental group, that are motivated by this work.

\begin{question}
{\em In Theorem~\ref{introthm:center}, we prove that the loop $\gamma_\pi$ induced by the plt blow-up is contained in the center subgroup.
Hence, we may define the normal subgroup
$P(x\in X)\subset \pi_1^{\rm loc}(X,x)$ generated by the loops
around exceptional divisors of plt blow-ups.
It is natural to ask whether $P(x\in X)$ has bounded index (depending on the dimension) inside $Z(\pi_1^{\rm loc}(X,x))$, i.e., 
is the center always almost generated by the loops induced by plt blow-ups?}
\end{question}

\begin{question}{\em 
Since two-dimensional klt singularities are quotient singularities, the constant $c'(2)$ equals $60$ 
(see, e.g.~\cite{Col07}*{Proposition C}).
It would be interesting to get an estimate for $c'(3)$.
More generally, it would be interesting to study what invariants of a klt singularity control $c'(n)$.
In general, to control the constant $c'(n)$, one needs to control the corresponding constant for terminal Fano varieties of one dimension less.
Hence, in order to find $c'(3)$ one needs to find the Jordan constant for del Pezzo surfaces.
}
\end{question}

\begin{bibdiv}
\begin{biblist}

\bib{AG10}{article}{
   author={Arzhantsev, I. V.},
   author={Ga\u{\i}fullin, S. A.},
   title={Cox rings, semigroups, and automorphisms of affine varieties},
   language={Russian, with Russian summary},
   journal={Mat. Sb.},
   volume={201},
   date={2010},
   number={1},
   pages={3--24},
   issn={0368-8666},
   translation={
      journal={Sb. Math.},
      volume={201},
      date={2010},
      number={1-2},
      pages={1--21},
      issn={1064-5616},
   },
   review={\MR{2641086}},
   doi={10.1070/SM2010v201n01ABEH004063},
}
	
\iffalse	
\bib{Art77}{article}{
   author={Artin, M.},
   title={Coverings of the rational double points in characteristic $p$},
   conference={
      title={Complex analysis and algebraic geometry},
   },
   book={
      publisher={Iwanami Shoten, Tokyo},
   },
   date={1977},
   pages={11--22},
   review={\MR{0450263}},
}
\fi

\bib{AW97}{article}{
   author={Abramovich, Dan},
   author={Wang, Jianhua},
   title={Equivariant resolution of singularities in characteristic $0$},
   journal={Math. Res. Lett.},
   volume={4},
   date={1997},
   number={2-3},
   pages={427--433},
   issn={1073-2780},
   review={\MR{1453072}},
   doi={10.4310/MRL.1997.v4.n3.a11},
}

\bib{BCHM}{article}{
   author={Birkar, Caucher},
   author={Cascini, Paolo},
   author={Hacon, Christopher D.},
   author={McKernan, James},
   title={Existence of minimal models for varieties of log general type},
   journal={J. Amer. Math. Soc.},
   volume={23},
   date={2010},
   number={2},
   pages={405--468},
   issn={0894-0347},
   review={\MR{2601039}},
   doi={10.1090/S0894-0347-09-00649-3},
}

\bib{Bir16a}{article}{
   author={Birkar, Caucher},
   title={Anti-pluricanonical systems on Fano varieties},
   journal={Ann. of Math. (2)},
   volume={190},
   date={2019},
   number={2},
   pages={345--463},
   issn={0003-486X},
   review={\MR{3997127}},
   doi={10.4007/annals.2019.190.2.1},
}

\bib{Bir16b}{article}{
   author={Birkar, Caucher},
   title={Singularities of linear systems and boundedness of Fano varieties},
   journal={Ann. of Math. (2)},
   volume={193},
   date={2021},
   number={2},
   pages={347--405},
   issn={0003-486X},
   review={\MR{4224714}},
   doi={10.4007/annals.2021.193.2.1},
}

\bib{Bra20}{article}{
   author={Braun, Lukas},
   title={The local fundamental group of a Kawamata log terminal singularity
   is finite},
   journal={Invent. Math.},
   volume={226},
   date={2021},
   number={3},
   pages={845--896},
   issn={0020-9910},
   review={\MR{4337973}},
   doi={10.1007/s00222-021-01062-0},
}

\bib{Cam11}{article}{
   author={Campana, Fr\'{e}d\'{e}ric},
   title={Orbifoldes g\'{e}om\'{e}triques sp\'{e}ciales et classification bim\'{e}romorphe
   des vari\'{e}t\'{e}s k\"{a}hl\'{e}riennes compactes},
   language={French, with English and French summaries},
   journal={J. Inst. Math. Jussieu},
   volume={10},
   date={2011},
   number={4},
   pages={809--934},
   issn={1474-7480},
   review={\MR{2831280}},
   doi={10.1017/S1474748010000101},
}

\bib{CLS11}{book}{
   author={Cox, David A.},
   author={Little, John B.},
   author={Schenck, Henry K.},
   title={Toric varieties},
   series={Graduate Studies in Mathematics},
   volume={124},
   publisher={American Mathematical Society, Providence, RI},
   date={2011},
   pages={xxiv+841},
   isbn={978-0-8218-4819-7},
   review={\MR{2810322}},
   doi={10.1090/gsm/124},
}

\bib{Col07}{article}{
   author={Collins, Michael J.},
   title={On Jordan's theorem for complex linear groups},
   journal={J. Group Theory},
   volume={10},
   date={2007},
   number={4},
   pages={411--423},
   issn={1433-5883},
   review={\MR{2334748}},
   doi={10.1515/JGT.2007.032},
}

\bib{dFKX}{article}{
   author={de Fernex, Tommaso},
   author={Koll\'{a}r, J\'{a}nos},
   author={Xu, Chenyang},
   title={The dual complex of singularities},
   conference={
      title={Higher dimensional algebraic geometry---in honour of Professor
      Yujiro Kawamata's sixtieth birthday},
   },
   book={
      series={Adv. Stud. Pure Math.},
      volume={74},
      publisher={Math. Soc. Japan, Tokyo},
   },
   date={2017},
   pages={103--129},
   review={\MR{3791210}},
   doi={10.2969/aspm/07410103},
}

\bib{Dur}{article}{
   author={Durfee, Alan H.},
   title={Neighborhoods of algebraic sets},
   journal={Trans. Amer. Math. Soc.},
   volume={276},
   date={1983},
   number={2},
   pages={517--530},
   issn={0002-9947},
   review={\MR{688959}},
   doi={10.2307/1999065},
}

\bib{FKL93}{article}{
   author={Fujiki, Akira},
   author={Kobayashi, Ryoichi},
   author={Lu, Steven},
   title={On the fundamental group of certain open normal surfaces},
   journal={Saitama Math. J.},
   volume={11},
   date={1993},
   pages={15--20},
   issn={0289-0739},
   review={\MR{1259272}},
}

\bib{Fle}{article}{
   author={Flenner, Hubert},
   title={Divisorenklassengruppen quasihomogener Singularit\"{a}ten},
   language={German},
   journal={J. Reine Angew. Math.},
   volume={328},
   date={1981},
   pages={128--160},
   issn={0075-4102},
   review={\MR{636200}},
   doi={10.1515/crll.1981.328.128},
}

\bib{Fos73}{book}{
   author={Fossum, Robert M.},
   title={The divisor class group of a Krull domain},
   note={Ergebnisse der Mathematik und ihrer Grenzgebiete, Band 74},
   publisher={Springer-Verlag, New York-Heidelberg},
   date={1973},
   pages={viii+148},
   review={\MR{0382254}},
}

\bib{FZ05}{article}{
   author={Flenner, Hubert},
   author={Zaidenberg, Mikhail},
   title={Locally nilpotent derivations on affine surfaces with a $\mathbb
   C^*$-action},
   journal={Osaka J. Math.},
   volume={42},
   date={2005},
   number={4},
   pages={931--974},
   issn={0030-6126},
   review={\MR{2196000}},
}

\bib{GKP}{article}{
   author={Greb, Daniel},
   author={Kebekus, Stefan},
   author={Peternell, Thomas},
   title={\'{E}tale fundamental groups of Kawamata log terminal spaces, flat
   sheaves, and quotients of abelian varieties},
   journal={Duke Math. J.},
   volume={165},
   date={2016},
   number={10},
   pages={1965--2004},
   issn={0012-7094},
   review={\MR{3522654}},
   doi={10.1215/00127094-3450859},
}

\bib{GM83}{article}{
   author={Goresky, Mark},
   author={MacPherson, Robert},
   title={Morse theory and intersection homology theory},
   conference={
      title={Analysis and topology on singular spaces, II, III},
      address={Luminy},
      date={1981},
   },
   book={
      series={Ast\'{e}risque},
      volume={101},
      publisher={Soc. Math. France, Paris},
   },
   date={1983},
   pages={135--192},
   review={\MR{737930}},
}
	
\bib{GM88}{book}{
   author={Goresky, Mark},
   author={MacPherson, Robert},
   title={Stratified Morse theory},
   series={Ergebnisse der Mathematik und ihrer Grenzgebiete (3) [Results in
   Mathematics and Related Areas (3)]},
   volume={14},
   publisher={Springer-Verlag, Berlin},
   date={1988},
   pages={xiv+272},
   isbn={3-540-17300-5},
   review={\MR{932724}},
   doi={10.1007/978-3-642-71714-7},
}

\bib{Gor81}{article}{
   author={Goresky, Mark},
   title={Whitney stratified chains and cochains},
   journal={Trans. Amer. Math. Soc.},
   volume={267},
   date={1981},
   number={1},
   pages={175--196},
   issn={0002-9947},
   review={\MR{621981}},
   doi={10.2307/1998577},
}

\bib{GZ94}{article}{
   author={Gurjar, R. V.},
   author={Zhang, D.-Q.},
   title={$\pi_1$ of smooth points of a log del Pezzo surface is finite. I},
   journal={J. Math. Sci. Univ. Tokyo},
   volume={1},
   date={1994},
   number={1},
   pages={137--180},
   issn={1340-5705},
   review={\MR{1298542}},
}

\bib{GZ95}{article}{
   author={Gurjar, R. V.},
   author={Zhang, D.-Q.},
   title={$\pi_1$ of smooth points of a log del Pezzo surface is finite. II},
   journal={J. Math. Sci. Univ. Tokyo},
   volume={2},
   date={1995},
   number={1},
   pages={165--196},
   issn={1340-5705},
   review={\MR{1348027}},
}

\bib{Hat}{book}{
   author={Hatcher, Allen},
   title={Algebraic topology},
   publisher={Cambridge University Press, Cambridge},
   date={2002},
   pages={xii+544},
   isbn={0-521-79160-X},
   isbn={0-521-79540-0},
   review={\MR{1867354}},
}

\bib{HLS19}{article}{
   author={Han, Jingjun},
   author={Liu, Jihao},
   author={Shokurov, Vyacheslav V.},
   title={ACC for minimal log discrepancies of exceptional singularities},
   note={https://arxiv.org/abs/1903.04338},
   date={2019},
}

\bib{Jor1873}{article}{
   author={Jordan, Camille},
   title={M\'{e}moire sur une application de la th\'{e}orie des substitutions \`a
   l'\'{e}tude des \'{e}quations diff\'{e}rentielles lin\'{e}aires},
   language={French},
   journal={Bull. Soc. Math. France},
   volume={2},
   date={1873/74},
   pages={100--127},
   issn={0037-9484},
   review={\MR{1503686}},
}

\bib{KK14}{article}{
   author={Kapovich, Michael},
   author={Koll\'{a}r, J\'{a}nos},
   title={Fundamental groups of links of isolated singularities},
   journal={J. Amer. Math. Soc.},
   volume={27},
   date={2014},
   number={4},
   pages={929--952},
   issn={0894-0347},
   review={\MR{3230815}},
   doi={10.1090/S0894-0347-2014-00807-9},
}

\bib{KM98}{book}{
   author={Koll\'{a}r, J\'{a}nos},
   author={Mori, Shigefumi},
   title={Birational geometry of algebraic varieties},
   series={Cambridge Tracts in Mathematics},
   volume={134},
   note={With the collaboration of C. H. Clemens and A. Corti;
   Translated from the 1998 Japanese original},
   publisher={Cambridge University Press, Cambridge},
   date={1998},
   pages={viii+254},
   isbn={0-521-63277-3},
   review={\MR{1658959}},
   doi={10.1017/CBO9780511662560},
}

\bib{KM99}{article}{
   author={Keel, Se\'{a}n},
   author={McKernan, James},
   title={Rational curves on quasi-projective surfaces},
   journal={Mem. Amer. Math. Soc.},
   volume={140},
   date={1999},
   number={669},
   pages={viii+153},
   issn={0065-9266},
   review={\MR{1610249}},
   doi={10.1090/memo/0669},
}

\bib{Kol11}{misc}{
  author = {Koll\'ar, J\'anos},
  title={New examples of terminal and log canonical singularities},
  year = {2011},
  note = {https://arxiv.org/abs/1107.2864},
}

\bib{Kol13b}{book}{
   author={Koll\'ar, J\'anos},
   title={Singularities of the minimal model program},
   series={Cambridge Tracts in Mathematics},
   volume={200},
   note={With a collaboration of S\'{a}ndor Kov\'{a}cs},
   publisher={Cambridge University Press, Cambridge},
   date={2013},
   pages={x+370},
   isbn={978-1-107-03534-8},
   review={\MR{3057950}},
   doi={10.1017/CBO9781139547895},
}

\bib{Kol92}{book}{ 
AUTHOR={Koll\'ar, J\'anos, et al.},
TITLE = {Flips and abundance for algebraic threefolds},
      NOTE = {Papers from the Second Summer Seminar on Algebraic Geometry
              held at the University of Utah, Salt Lake City, Utah, August
              1991,
              Ast\'{e}risque No. 211 (1992) (1992)},
 PUBLISHER = {Soci\'{e}t\'{e} Math\'{e}matique de France, Paris},
      YEAR = {1992},
     PAGES = {1--258},
      ISSN = {0303-1179},
}

\bib{Lan02}{book}{
   author={Lang, Serge},
   title={Algebra},
   series={Graduate Texts in Mathematics},
   volume={211},
   edition={3},
   publisher={Springer-Verlag, New York},
   date={2002},
   pages={xvi+914},
   isbn={0-387-95385-X},
   review={\MR{1878556}},
   doi={10.1007/978-1-4613-0041-0},
}

\bib{LLM19}{article}{
   author={Laface, Antonio},
   author={Liendo, Alvaro},
   author={Moraga, Joaqu\'{\i}n},
   title={The fundamental group of a log terminal $\mathbb T$-variety},
   journal={Eur. J. Math.},
   volume={5},
   date={2019},
   number={3},
   pages={937--957},
   issn={2199-675X},
   review={\MR{3993273}},
   doi={10.1007/s40879-018-0296-z},
}

\bib{Mil}{book}{
   author={Milnor, John},
   title={Singular points of complex hypersurfaces},
   series={Annals of Mathematics Studies, No. 61},
   publisher={Princeton University Press, Princeton, N.J.; University of
   Tokyo Press, Tokyo},
   date={1968},
   pages={iii+122},
   review={\MR{0239612}},
}
	
\bib{Mor18b}{article}{
   author={Moraga, Joaqu\'{\i}n},
   title={On minimal log discrepancies and Koll\'{a}r components},
   journal={Proc. Edinb. Math. Soc. (2)},
   volume={64},
   date={2021},
   number={4},
   pages={982--1001},
   issn={0013-0915},
   review={\MR{4349419}},
   doi={10.1017/S0013091521000729},
}

\bib{Mor20}{article}{
   author={Moraga, Joaqu\'{\i}n},
   title={Fano-type surfaces with large cyclic automorphisms},
   journal={Forum Math. Sigma},
   volume={9},
   date={2021},
   pages={Paper No. e54, 27},
   review={\MR{4298025}},
   doi={10.1017/fms.2021.44},
}

\bib{Mum61}{article}{
   author={Mumford, David},
   title={The topology of normal singularities of an algebraic surface and a
   criterion for simplicity},
   journal={Inst. Hautes \'{E}tudes Sci. Publ. Math.},
   number={9},
   date={1961},
   pages={5--22},
   issn={0073-8301},
   review={\MR{0153682}},
}

\bib{Pop}{article}{
   author={Popov, Vladimir L.},
   title={On the Makar-Limanov, Derksen invariants, and finite automorphism
   groups of algebraic varieties},
   conference={
      title={Affine algebraic geometry},
   },
   book={
      series={CRM Proc. Lecture Notes},
      volume={54},
      publisher={Amer. Math. Soc., Providence, RI},
   },
   date={2011},
   pages={289--311},
   review={\MR{2768646}},
   doi={10.1090/crmp/054/17},
}

\bib{P98}{article}{
   author={Prokhorov, Yu. G.},
   title={Blow-ups of canonical singularities},
   conference={
      title={Algebra},
      address={Moscow},
      date={1998},
   },
   book={
      publisher={de Gruyter, Berlin},
   },
   date={2000},
   pages={301--317},
   review={\MR{1754677}},
}

\iffalse
\bib{Pro01}{book}{
   author={Prokhorov, Yuri G.},
   title={Lectures on complements on log surfaces},
   series={MSJ Memoirs},
   volume={10},
   publisher={Mathematical Society of Japan, Tokyo},
   date={2001},
   pages={viii+130},
   isbn={4-931469-12-4},
   review={\MR{1830440}},
}
\fi

\bib{PS01}{article}{
   author={Prokhorov, Yu. G.},
   author={Shokurov, V. V.},
   title={The first fundamental theorem on complements: from global to
   local},
   language={Russian, with Russian summary},
   journal={Izv. Ross. Akad. Nauk Ser. Mat.},
   volume={65},
   date={2001},
   number={6},
   pages={99--128},
   issn={1607-0046},
   translation={
      journal={Izv. Math.},
      volume={65},
      date={2001},
      number={6},
      pages={1169--1196},
      issn={1064-5632},
   },
   review={\MR{1892905}},
   doi={10.1070/IM2001v065n06ABEH000366},
}

\bib{PS14}{article}{
   author={Prokhorov, Yuri},
   author={Shramov, Constantin},
   title={Jordan property for groups of birational selfmaps},
   journal={Compos. Math.},
   volume={150},
   date={2014},
   number={12},
   pages={2054--2072},
   issn={0010-437X},
   review={\MR{3292293}},
   doi={10.1112/S0010437X14007581},
}

\bib{PS16}{article}{
   author={Prokhorov, Yuri},
   author={Shramov, Constantin},
   title={Jordan property for Cremona groups},
   journal={Amer. J. Math.},
   volume={138},
   date={2016},
   number={2},
   pages={403--418},
   issn={0002-9327},
   review={\MR{3483470}},
   doi={10.1353/ajm.2016.0017},
}

\bib{Sho92}{article}{
   author={Shokurov, V. V.},
   title={Three-dimensional log perestroikas},
   language={Russian},
   journal={Izv. Ross. Akad. Nauk Ser. Mat.},
   volume={56},
   date={1992},
   number={1},
   pages={105--203},
   issn={1607-0046},
   translation={
      journal={Russian Acad. Sci. Izv. Math.},
      volume={40},
      date={1993},
      number={1},
      pages={95--202},
      issn={1064-5632},
   },
   review={\MR{1162635}},
   doi={10.1070/IM1993v040n01ABEH001862},
}

\bib{Spe}{misc}{    
    title={Equivariant normalization?},    
    author={Speyer, David E.},    
    note={URL: https://mathoverflow.net/q/145025 (version: 2013-10-20)},    
    eprint={https://mathoverflow.net/q/145025},    
    organization={MathOverflow}  
}

\bib{Sto69}{article}{
   author={Storch, Uwe},
   title={\"{U}ber die Divisorenklassengruppen normaler komplexanalytischer
   Algebren},
   language={German},
   journal={Math. Ann.},
   volume={183},
   date={1969},
   pages={93--104},
   issn={0025-5831},
   review={\MR{257070}},
   doi={10.1007/BF01350229},
}

\bib{Sum74}{article}{
   author={Sumihiro, Hideyasu},
   title={Equivariant completion},
   journal={J. Math. Kyoto Univ.},
   volume={14},
   date={1974},
   pages={1--28},
   issn={0023-608X},
   review={\MR{0337963}},
   doi={10.1215/kjm/1250523277},
}

\bib{Sta}{misc}{    
    title={The Stacks project},    
    author={The {Stacks project authors}},    
    note={\url{https://stacks.math.columbia.edu}},    
    year         = {2020},  
}

\bib{TX17}{article}{
   author={Tian, Zhiyu},
   author={Xu, Chenyang},
   title={Finiteness of fundamental groups},
   journal={Compos. Math.},
   volume={153},
   date={2017},
   number={2},
   pages={257--273},
   issn={0010-437X},
   review={\MR{3604863}},
   doi={10.1112/S0010437X16007867},
}

\bib{Xu14}{article}{
   author={Xu, Chenyang},
   title={Finiteness of algebraic fundamental groups},
   journal={Compos. Math.},
   volume={150},
   date={2014},
   number={3},
   pages={409--414},
   issn={0010-437X},
   review={\MR{3187625}},
   doi={10.1112/S0010437X13007562},
}

\bib{XZ20}{misc}{
author={Xu, Chenyang},
author={Zhuang, Ziquan},
Title = {Uniqueness of the minimizer of the normalized volume function},
Year = {2020},
note = {arXiv:2005.08303},
}

\end{biblist}
\end{bibdiv}

\end{document}